\documentclass{amsart} 

% packages
%\usepackage[leqno,fleqn]{amsmath}
\usepackage{amsmath}
\usepackage{amsthm}
\usepackage{amsbsy}
\usepackage{amssymb}
\usepackage{mathtools}
\usepackage{calrsfs}
\usepackage{comment}
\usepackage[shortlabels]{enumitem}
\usepackage[normalem]{ulem}
\usepackage{amsrefs}
% tikz
\usepackage{tikz}
\usetikzlibrary{positioning,patterns,calc}
% math alphabets
%\usepackage{amsfonts}
\usepackage{mathrsfs}
\DeclareMathAlphabet{\mathpzc}{OT1}{pzc}{m}{it}
\usepackage[utf8]{inputenc}
\usepackage{hyperref}

%\addtolength{\topmargin}{-0.1\textheight}
%\addtolength{\textheight}{0.2\textheight}
%\addtolength{\textwidth}{0.3\textwidth}
%\setlength{\oddsidemargin}{-0.08in}
%\setlength{\evensidemargin}{-0.08in} \pretolerance=5000
\textwidth = 385pt

%%%% notations %%%%
\newcommand{\real}{\mathbb{R}} % real numbers

\def\R{\mathbb{R}}
\def\C{\mathbb{C}}
 % integers
 % naturals
 % hausdorff measure

\def\N{\mathbb{N}}

 % riemannian volume form
\newcommand{\sffM}{II} % second fundamental form of ambient manifold in Rn
\newcommand{\scalar}{R} % scalar curvature of ambient manifold
\newcommand{\Index}{\textup{Index}} % index of a cmc surface
 % connection on M
 % connection on Sigma
 % trace with respect to a metric
 % divergent of a vector field
 % dimension of a vector space
 % harmonic vector fields closed case
\newcommand{\harmonicvfb}{\mathcal{H}^1_T} % space of harmonic tangent vector fields
 %suffix for the robin part of the boundary
 %blow-up set
\newcommand{\Half}{\mathcal{H}} %half-space

%%%% theorem styles %%%%

\newtheorem{theorem}{Theorem}[section]
\newtheorem{lemma}[theorem]{Lemma}
\newtheorem{remark}[theorem]{Remark}
\newtheorem{proposition}[theorem]{Proposition}
\newtheorem{corollary}[theorem]{Corollary}

\newtheorem{definition}[theorem]{Definition}
\newtheorem*{theorem*}{Theorem}

\newtheorem{claim}{\texttt{Claim}}
\makeatletter
\@addtoreset{claim}{theorem}
\makeatother

%%%%%%%%%%%%%%%%%%%
\theoremstyle{plain}

\newenvironment{customthm}[1]
  {\innercustomthm}
  {\endinnercustomthm}
  
 \theoremstyle{plain}

%%%%%%%%%%%%  So we can use index numbers of thms from the introduction in the main chapters.

%%%%%%%%%%%%%%%
\usepackage{enumitem}
\usepackage{lipsum}
\setlist[itemize]{leftmargin=*}
%%%% this is to make sure bullets in itemize have no indent.

\def\bpf{\begin{proof}}
\def\epf{\end{proof}}
\def\be{\begin{equation}}
\def\ee{\end{equation}}
\def\bea{\begin{eqnarray}}
\def\eea{\end{eqnarray}}
\def\bt{\begin{theorem}}
\def\et{\end{theorem}}
\def\bl{\begin{lemma}}
\def\el{\end{lemma}}
\def\br{\begin{remark}}
\def\er{\end{remark}}
\def\bc{\begin{corollary}}
\def\ec{\end{corollary}}
\def\bd{\begin{definition}}
\def\ed{\end{definition}}
\def\bp{\begin{proposition}}
\def\ep{\end{proposition}}

\def\ll{\langle}
\def\rl{\rangle}

%move the date to the top instead of the footnote
\usepackage{etoolbox}
\makeatletter
\patchcmd{\@maketitle}
  {\ifx\@empty\@dedicatory}
  {\ifx\@empty\@date \else {\vskip3ex \centering\footnotesize\@date\par\vskip1ex}\fi
   \ifx\@empty\@dedicatory}
  {}{}
\patchcmd{\@adminfootnotes}
  {\ifx\@empty\@date\else \@footnotetext{\@setdate}\fi}
  {}{}{}
\makeatother

\title[Capillary surfaces: stability, index and curvature estimates]{Capillary surfaces: stability, index and curvature estimates}
\author{Han Hong}
\address{Yau Mathematics Science Centre, Tsinghua University, Beijing, China, 100084}
\email{hh0927@mail.tsinghua.edu.cn}

\author{Artur B. Saturnino}
\address{Department of Mathematics, University of Pennsylvania, Philadelphia, PA 19104,USA}
\email{bsatur@sas.upenn.edu}

%\date{\today}
%\thanks{}

\begin{document}
\maketitle

\begin{abstract}
    In this paper we investigate the connection between the index and the geometry and topology of capillary surfaces. We prove an index estimate for  compact capillary surfaces immersed in general 3-manifolds with boundary. We also study noncompact capillary surfaces with finite index and show that, under suitable curvature assumptions, such surface is conformally equivalent to a compact Riemann surface with boundary, punctured at finitely many points. We then prove that a weakly stable capillary surface immersed in a half-space of $\mathbb{R}^3$ which is minimal or has a contact angle less than or equal to $\pi/2$ must be a half-plane. Using this uniqueness result we obtain curvature estimates for strongly stable capillary surfaces immersed in a 3-manifold with bounded geometry.
\end{abstract}
% MSC2020: Primary: 53A10, 49Q10 Secondary: 49R05

\section{Introduction}

Let $M$ be a smooth three-dimensional Riemannian manifold with smooth boundary. We are interested in capillary surfaces, namely, two-sided minimal or constant mean curvature (CMC) surfaces in $M$ with boundary  in $\partial M$ that intersect $\partial M$ at a constant angle $\theta\in (0,\pi)$. When $\theta=\pi/2$, we usually call such surfaces free boundary surfaces. From the variational point of view, capillary surfaces are critical points of a certain energy functional with respect to compactly supported volume-preserving variations. This energy functional involves the area of the surface, the wet area in the boundary of the ambient space and the angle of contact between the surface and the boundary of the ambient space (see definition in Section \ref{sec: prelim}). In fact, capillary surfaces in subdomains of $\mathbb{R}^3$ model incompressible liquids inside containers in the absence of gravity. In this context the contact angle only depends on the strength of the interactions between the liquid, the container and the ambient gas. For interested readers, Finn's book \cite{Finn} is recommended as a good survey about capillary surfaces.

The weak index and the strong index of a capillary surface are natural quantities associated to the second variation of the energy functional. These quantities measure the dimension of the space of deformations of the surface that decrease energy to second order. The difference between these indices involves the deformations that are considered: In computing the weak index one only considers volume-preserving deformations while in computing the strong index one considers all deformations. For this reason the weak index is more natural when studying CMC surfaces while the strong index is more natural in the context of minimal surfaces. It follows from the min-max principle that the weak index is not greater than the strong index, however they can differ by at most one, and
for some complete noncompact CMC surfaces they coincide \cite{Barbosa-Berard-twisted-eigenvalue}.

As we will illustrate below, it is well-known that, in the complete and in the free boundary cases, the weak index and the strong index of a CMC or minimal surface are closely linked to the geometry and the topology of the surface. Our objective in this paper is to explore this link in the case of capillary surfaces.

\subsection{Index estimates for compact capillary surfaces} We say that a capillary surface is weakly (resp. strongly) stable if its weak (resp. strong) index is zero. It is natural to try to classify all weakly or strongly stable capillary surfaces in a manifold $M$. We first only consider compact capillary surfaces.

Two interesting examples are the cases when $M$ is an unit ball $\mathbb{B}$ or a (closed) half-space of $\mathbb{R}^3$. If $M$ is the unit ball, this classification question has been completely solved (see for example \cite{Wang-Xia-Uniqueness-of-stable}), any weakly stable capillary surface in the unit ball must be totally umbilical, i.e., they are spherical caps or equatorial disks.  If $M$ is a half-space of $\mathbb{R}^3$, any  embedded compact capillary surfaces in $M$ must be rotational symmetric along some axis that is perpendicular to the boundary of the half space due to Alexandrov's reflection method, so embedded capillary surfaces are spherical caps \cite{Wente-Alexandrov-reflection}. Furthermore, Marinov \cite{Marinov-stability-of-capillary} showed that the only weakly stable compact capillary surfaces immersed in $M$ with embedded boundary are spherical caps. Ainouz and Souam \cite{Ainouz-Souam-capillarysurfaceinslab} improved on this result by extending it to higher dimensions and showing that one only need to assume that  $\theta = \pi/2$ or that the boundary components of the surface are embedded. It is an interesting question whether or not this extra assumption can be dropped.

Index estimates for compact capillary surfaces in general 3-manifolds are also interesting to investigate. For free boundary minimal surfaces, some results about bounding index from below in terms of the topology of the surface have been proved by Sargent \cite{Pam-freeboundary-inball} and Ambrozio-Carlotto-Sharp \cite{Ambrozio-Carlotto-Sharp-freeboundaryminimalsurface}. In these papers they used harmonic one forms to construct test functions for the second variation formula of area functional. This idea was first discovered by Ros \cite{Ros-onesided-minimalsurface}, and further developed by Savo \cite{Savo-indexbounds-onsphere} and Ambrozio-Carlotto-Sharp \cite{Ambrozio-Carlotto-Sharp-closedminimalsurfacesinmanifolds} in the case when the surfaces in question are closed minimal surfaces. Aiex and the first named author applied a similar idea to the settings where the surfaces are closed CMC surfaces and free boundary CMC surfaces in a general three-dimensional Riemannian manifold \cite{AH2021} and obtained lower index bounds in terms of the topology. In particular, as a byproduct, they showed that the index of free boundary CMC surfaces in a mean convex domain of $\mathbb{R}^3$ is bounded below by $(2g-r-4)/6$ where $g$ is the genus of the surface and $r$ is the number of boundary components of the surface. This particular result was also obtained by Cavalcante-de Oliveira in \cite{Cavalcante-Oliveira-freeboundary}. Here we generalize these results to compact capillary surfaces. To be precise, we prove the following:

\begin{theorem}\label{index estimate free boundary-introduction}
Let $M$ be a $3$-dimensional oriented Riemannian manifold with boundary isometrically embedded in $\real^d$ and let $\Sigma$ be a compact capillary surface immersed in $M$ at a constant angle $\theta$ with genus $g$ and $r$ boundary components.
Suppose that every non-zero $\xi\in \harmonicvfb(\Sigma,\partial \Sigma)$ satisfies
\begin{equation*}
\begin{aligned}
\int_{\Sigma} & \sum_{i=1}^2|\sffM_M(e_i,\xi)|^2+|\sffM_M(e_i,\star\xi)|^2\ dA
-\int_\Sigma R_{M}|\xi|^2\ dA\\ &-\int_{\partial\Sigma} \frac{2}{\sin\theta}H_{\partial M}|\xi|^2\ d\ell <\int_\Sigma H_\Sigma^2|\xi|^2\ dA+\int_{\partial\Sigma}2\cot\theta H_\Sigma|\xi|^2\ d\ell.
\end{aligned}
\end{equation*}
Then
\[\Index_w(\Sigma)\geq\frac{2g+r-1-d}{2d}.\]
\end{theorem}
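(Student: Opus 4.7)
The plan is to adapt the Ros--Savo--Ambrozio-Carlotto-Sharp harmonic one-form construction to the capillary setting, following the strategy carried out by Aiex and the first named author in \cite{AH2021} for free boundary CMC surfaces. Hodge theory on a compact oriented surface with boundary gives $\dim \harmonicvfb(\Sigma,\partial\Sigma)=2g+r-1$, and the task is to convert this cohomological dimension into admissible variations on which the weak Jacobi quadratic form $Q$ of the capillary energy is strictly negative.

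First, fix an orthonormal basis $a_1,\dots,a_d$ of $\real^d$ coming from the isometric embedding $M\hookrightarrow\real^d$, identify each $\xi\in\harmonicvfb(\Sigma,\partial\Sigma)$ with its dual tangent vector field on $\Sigma$, and for every $\xi$ introduce the $2d$ scalar test functions
\[
u_i=\langle\xi,a_i\rangle,\qquad v_i=\langle\star\xi,a_i\rangle,\qquad i=1,\dots,d.
\]
The heart of the argument is to expand $\sum_{i=1}^{d}(Q(u_i)+Q(v_i))$ and identify it, up to a positive factor, with the difference of the two sides of the hypothesis. For this, use $\sum_i u_i^2=\sum_i v_i^2=|\xi|^2$; split each $|\nabla u_i|^2$ into the intrinsic piece and an extrinsic correction involving $\sffM_M(e_j,\xi)$, and similarly for $v_i$ with $\star\xi$; apply the Bochner identity on $\Sigma$ to the one-form $\xi$, which under $d\xi=\delta\xi=0$ and tangential boundary data turns $\int_\Sigma|\connS\xi|^2$ into an intrinsic Ricci integrand plus explicit boundary terms; and rewrite the Ricci term via the Gauss equation of $\Sigma\subset M\subset\real^d$ in terms of $\scalar_M$, $H_\Sigma$ and $|A_\Sigma|$. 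The boundary integrals produced by Bochner and by the capillary second variation formula then combine, using that $\Sigma$ meets $\partial M$ at constant angle $\theta$, into the $H_{\partial M}/\sin\theta$ and $H_\Sigma\cot\theta$ weights appearing in the hypothesis. The net conclusion is that the hypothesised inequality is equivalent to $\sum_i(Q(u_i)+Q(v_i))<0$ for every non-zero $\xi$.

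To promote this negative-sum statement to an index lower bound, the $u_i,v_i$ must first be projected onto the space of admissible variations for $\Index_w$, which in the capillary setting means imposing a single linear (volume-preserving) constraint coupling interior and boundary integrals. Sending $\xi$ to its obstructions to this constraint defines an $\real^d$-valued linear map on $\harmonicvfb$ (the $v_i$ obstructions collapse onto those of the $u_i$ via $\star$-duality and the co-closedness of $\xi$), so its kernel $K$ satisfies $\dim K\ge 2g+r-1-d$. Letting $\phi_1,\dots,\phi_k$ be a basis for the negative cone of $Q$ on admissible variations, with $k=\Index_w(\Sigma)$, consider
\[
\Phi:K\to\real^{2dk},\qquad \Phi(\xi)=\bigl(\langle u_i,\phi_j\rangle_{L^2},\langle v_i,\phi_j\rangle_{L^2}\bigr)_{i,j}.
\]
If $\Phi(\xi)=0$ then each test function is $L^2$-orthogonal to the negative cone, so $Q(u_i),Q(v_i)\ge 0$, contradicting the strict inequality above unless $\xi=0$. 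Hence $\Phi$ is injective, yielding $2g+r-1-d\le\dim K\le 2dk$, which is the claimed bound.

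The main technical obstacle lies in the second paragraph: tracking every boundary integral produced by the Bochner formula for a tangentially harmonic one-form on a surface with boundary, and verifying that the capillary weights $\cot\theta$ and $1/\sin\theta$ assemble with the shape operators of $\Sigma$ in $M$ and of $M$ in $\real^d$ exactly as the hypothesis prescribes. A secondary subtlety is justifying that the volume-preserving projection costs only $d$ rather than $2d$ dimensions; this is where the Hodge duality between $u_i$ and $v_i$ enters in a crucial way.
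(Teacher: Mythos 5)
Your proposal follows essentially the same strategy as the paper: use the coordinate functions $u_j=\langle\xi,E_j\rangle$ and $u_j^\star=\langle\star\xi,E_j\rangle$ of a tangential harmonic vector field $\xi$ as test functions, compute $\sum_j Q(u_j,u_j)+Q(u_j^\star,u_j^\star)$ via the gradient decomposition, Gauss equation, and Weitzenb\"ock formula, identify the result with the quantity in the hypothesis, and then run a Rank--Nullity argument against the space spanned by low eigenfunctions of $\tilde J$.

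Two small points on which your account of the admissibility bookkeeping diverges from what actually happens. First, the volume-preserving constraint is simply $\int_\Sigma u\,dA=0$; it does not couple an interior integral to a boundary integral (cf.\ the first variation of volume \eqref{first variation of volume}). Second, you claim that the admissibility obstructions for the $v_i=u_i^\star$ ``collapse onto those of the $u_i$ via $\star$-duality and the co-closedness of $\xi$,'' which would suggest that imposing admissibility on the $u_i$ automatically controls the $v_i$. What is actually true is almost the opposite: for $\xi\in\harmonicvfb(\Sigma,\partial\Sigma)$ the functions $u_j$ \emph{automatically} satisfy $\int_\Sigma u_j=0$ (this is Lemma~3.1 of \cite{AH2021}, which uses that $\xi$ is tangential along $\partial\Sigma$ and divergence-free), whereas $\star\xi$ is \emph{normal} to $\partial\Sigma$ in $T\Sigma$, so the argument does not apply and $\int_\Sigma u_j^\star$ need not vanish. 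It is precisely the $d$ constraints $\int_\Sigma u_j^\star=0$ that must be imposed, which is why the paper's comparison map lands in $\R^{2dk+d}$. The final dimension count $2g+r-1-d\le 2dk$ comes out the same either way, so your conclusion is correct, but the stated mechanism for why only $d$ (rather than $2d$) extra constraints appear is backwards.
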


In particular, in $\mathbb{R}^3$ we have
\begin{corollary}\label{particular case in R3}
Let $M$ be a domain in $\mathbb{R}^3$ with smooth boundary and let $\Sigma$ be a compact capillary surface immersed in $M$ at a constant angle $\theta$ with genus $g$ and $r$ boundary components.
Suppose that $H_{\partial M} +H_\Sigma\cos\theta \ge 0$ along $\partial \Sigma$ and that one of the
following holds:
\[H_\Sigma > 0,\] 
or 
\[ H_{\partial M} > 0\]
at some point in $\partial \Sigma$.
Then
\[\Index_w(\Sigma)\geq\frac{2g+r-4}{6}.\]
\end{corollary}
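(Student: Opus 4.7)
The plan is to deduce this corollary directly from Theorem \ref{index estimate free boundary-introduction} by checking that, for $d=3$, its integral hypothesis is satisfied under the stated assumptions. First, since $M\subset\mathbb{R}^3$ is trivially isometrically embedded in $\mathbb{R}^3$, both the second fundamental form $\sffM_M$ and the ambient scalar curvature $R_M$ vanish identically. Using the identity
\[
2\cot\theta\,H_\Sigma+\frac{2}{\sin\theta}H_{\partial M}=\frac{2}{\sin\theta}\bigl(H_{\partial M}+H_\Sigma\cos\theta\bigr)
\]
and rearranging, the inequality appearing in the hypothesis of Theorem \ref{index estimate free boundary-introduction} will reduce to
\[
0<\int_\Sigma H_\Sigma^2|\xi|^2\,dA+\int_{\partial\Sigma}\frac{2}{\sin\theta}\bigl(H_{\partial M}+H_\Sigma\cos\theta\bigr)|\xi|^2\,d\ell
\]
for every nonzero $\xi\in\harmonicvfb(\Sigma,\partial\Sigma)$.

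Next, using that $\sin\theta>0$ for $\theta\in(0,\pi)$ together with the hypothesis $H_{\partial M}+H_\Sigma\cos\theta\ge 0$, both integrands become nonnegative, so it remains to establish strict positivity in each of the two alternatives. In the case $H_\Sigma>0$, I would note that the interior integral equals $H_\Sigma^2\int_\Sigma|\xi|^2\,dA>0$, since a nontrivial harmonic $1$-form cannot vanish on any open subset of $\Sigma$ by unique continuation. In the case $H_{\partial M}(p)>0$ for some $p\in\partial\Sigma$, continuity provides a boundary arc $I\ni p$ on which $H_{\partial M}+H_\Sigma\cos\theta>0$. Since $\xi\in\harmonicvfb$ has vanishing normal component on $\partial\Sigma$ by the tangential boundary condition, if $\xi$ also vanished on $I$ then the whole $1$-form would vanish along $I$, and a standard unique-continuation argument for harmonic $1$-forms would force $\xi\equiv 0$, contradicting the choice $\xi\neq 0$. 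Thus the boundary integral is strictly positive.

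Once the hypothesis of Theorem \ref{index estimate free boundary-introduction} is verified, substituting $d=3$ into its conclusion yields exactly
\[
\Index_w(\Sigma)\ge\frac{2g+r-1-3}{2\cdot 3}=\frac{2g+r-4}{6},
\]
as required. The only point requiring any real care—which I expect to be the main (if mild) technical obstacle—is the unique-continuation step for tangential harmonic $1$-forms vanishing along a boundary arc; I would handle this by a local Schwarz-type reflection across $\partial\Sigma$ that reduces the claim to interior unique continuation, or alternatively by invoking the Aronszajn unique-continuation theorem for elliptic systems applied to the components of $\xi$.
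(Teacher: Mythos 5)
Your proof follows the same route as the paper's: reduce Theorem \ref{index estimate free boundary-introduction} with $d=3$ and $\sffM_M = R_M = 0$, rewrite the boundary term as $\frac{2}{\sin\theta}(H_{\partial M}+H_\Sigma\cos\theta)$, and argue strict positivity of the remaining quantity. The unique-continuation step you flag as the main technical obstacle is exactly where the paper invokes Lemma \ref{lemma: max principle harmonic vector field} (quoted from Schwarz's book on the Hodge decomposition): a harmonic vector field that vanishes on an open piece of $\partial\Sigma$ must vanish identically. Your reflection/Aronszajn approach would re-derive this; the paper simply cites it.

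There is, however, one small slip in your treatment of the second alternative. You claim that $H_{\partial M}(p)>0$, plus continuity, produces a boundary arc on which $H_{\partial M}+H_\Sigma\cos\theta>0$. This fails when $\cos\theta<0$: nothing prevents $H_{\partial M}(p)=-H_\Sigma\cos\theta>0$, in which case $H_{\partial M}(p)+H_\Sigma\cos\theta=0$. The paper avoids this by splitting on $H_\Sigma$ rather than on the two stated alternatives: if $H_\Sigma>0$ the interior term already gives strict positivity, while if $H_\Sigma=0$ (recall $H_\Sigma\ge 0$ by the paper's sign convention) then $H_\Sigma\cos\theta=0$ and your continuity argument works verbatim. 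Also, in the first alternative the positivity of $\int_\Sigma|\xi|^2$ for $\xi\neq 0$ requires no unique continuation—it is just the statement that a nonzero $L^2$ vector field has positive $L^2$ norm.
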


The ideas of the proof of Theorem \ref{index estimate free boundary-introduction} are essentially the same as those in \cite{AH2021}, though the calculations are more involved because of the angle $\theta$ is not necessarily $\pi/2$.

\begin{remark}\label{rmk: compact surface in half-space}
When $M$ is a closed half-space in $\mathbb{R}^3$, $H_{\partial M}+H_\Sigma\cos\theta$ reduces to $H_\Sigma\cos\theta.$ As compact capillary surfaces in a closed half-space cannot be minimal due to maximum principle, the assumption of Corollary \ref{particular case in R3} simply becomes $\theta\in(0,\pi/2].$
\end{remark}

\subsection{Noncompact capillary surfaces} 
We characterize the strong index of noncompact capillary  surfaces. More precisely, we show that strong stability is characterized by the existence of a positive Jacobi function (Proposition \ref{prop: positive jacobi field}) and that the strong index of a noncompact capillary surface is realized as the dimension of a space generated by $L^2$ eigenfunctions (Proposition \ref{prop: L2 characterization}).

Fisher-Colbrie \cite{Fischer-Colbrie-On-complete-minimal}*{Theorem 1} has shown that a two-sided complete
noncompact minimal surface in an oriented ambient space of nonnegative scalar curvature is conformally equivalent to a
compact Riemann surface with finitely many points removed. We will show that an analogous statement holds for 
noncompact capillary surfaces:
\begin{theorem}
\label{thm: Finite topology intro}
Let $M$ be an oriented Riemannian 3-manifold with smooth boundary and let $\Sigma$ be a 
noncompact capillary surface with finite index immersed in
$M$ at a constant angle $\theta$. Assume that 
$R_M + H^2_{\Sigma} \ge 0$ and that one of the following holds:
\[\partial \Sigma \text{ is compact,}\]
or 
\[H_{\partial M} + H_\Sigma\cos \theta  \ge 0 \ \text{along $\partial \Sigma$}.\]
Then $\Sigma$ is conformally equivalent to a compact
Riemann surface with boundary and finitely many points removed, each associated to an end of the surface. Moreover, 
\[\int_\Sigma R_M + H_\Sigma^2 + |A_\Sigma|^2 + \int_{\partial \Sigma} H_{\partial M} + H_\Sigma\cos \theta < \infty.\]
\end{theorem}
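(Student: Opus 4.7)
The plan is to adapt Fisher-Colbrie's classical argument for complete minimal surfaces to the capillary setting. Since $\Sigma$ has finite strong index, Proposition \ref{prop: positive jacobi field} produces a compact $K \subset \Sigma$ such that $\Sigma \setminus K$ is strongly stable, and hence there is a smooth function $u>0$ on $\Sigma\setminus K$ satisfying the Jacobi equation
\[\Delta u + (|A_\Sigma|^2 + \text{Ric}_M(\nu,\nu))\,u = 0\]
in the interior, together with the capillary Robin boundary condition $\partial_\eta u = q\,u$ along $\partial\Sigma\setminus K$, where $q$ is the $\theta$-dependent boundary coefficient of the second variation (a trigonometric combination of $II_{\partial M}$ and $II_\Sigma$). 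Setting $v=\log u$ and invoking the Gauss equation $2K_\Sigma = R_M - 2\,\text{Ric}_M(\nu,\nu)+H_\Sigma^2-|A_\Sigma|^2$, this rewrites as
\[K_\Sigma = \Delta v + |\nabla v|^2 + \tfrac{1}{2}\bigl(|A_\Sigma|^2 + R_M + H_\Sigma^2\bigr), \qquad \partial_\eta v = q \text{ on } \partial\Sigma\setminus K.\]

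Next, I would multiply by a Lipschitz cutoff $\phi^2$ vanishing on $K$, integrate by parts, and combine with Gauss--Bonnet on an exhausting sequence of subdomains. The boundary contribution, once $\partial_\eta v=q$ is substituted and the trigonometric identity expressing the geodesic curvature of $\partial\Sigma$ inside $\Sigma$ in terms of $II_{\partial M}$, $II_\Sigma$ and $\theta$ is used, should collapse into a positive multiple of $\int_{\partial\Sigma}\phi^2(H_{\partial M}+H_\Sigma\cos\theta)/\sin\theta$; the Cauchy--Schwarz inequality absorbs the $\int\phi\nabla\phi\cdot\nabla v$ term into $\int|\nabla v|^2\phi^2$ plus $\int|\nabla\phi|^2$. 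Under the hypotheses $R_M+H_\Sigma^2\geq 0$ and either compactness of $\partial\Sigma$ or $H_{\partial M}+H_\Sigma\cos\theta\geq 0$, every term one wants to keep carries the correct sign. A bootstrap using the stability of $\Sigma\setminus K$ (starting from an a priori subquadratic area-growth bound) then allows one to take logarithmic cutoffs with $\int|\nabla\phi_j|^2\to 0$ on the ends, while finiteness of $\chi(\mathrm{supp}(\phi_j))$ follows from the finite index. Passing to the limit yields
\[\int_\Sigma\bigl(R_M+H_\Sigma^2+|A_\Sigma|^2\bigr) + \int_{\partial\Sigma}\bigl(H_{\partial M}+H_\Sigma\cos\theta\bigr) < \infty.\]

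Finite total absolute curvature of $\Sigma$ then follows from the Gauss equation, and Huber's theorem produces the conformal classification. When $\partial\Sigma$ is compact, each end of $\Sigma$ is an interior end; applying Huber's theorem after capping off the compact boundary identifies $\Sigma$ with a compact Riemann surface with boundary, punctured at finitely many interior points. When $\partial\Sigma$ is noncompact with $H_{\partial M}+H_\Sigma\cos\theta\geq 0$, I would double $\Sigma$ across $\partial\Sigma$; the doubled metric is Lipschitz, with a distributional curvature along the seam proportional to $2\kappa_g(\partial\Sigma)=2(H_{\partial M}+H_\Sigma\cos\theta)/\sin\theta\geq 0$, which by the integral bound above is a finite nonnegative measure, so a version of Huber's theorem for metrics with bounded curvature measure applies to the double and descends to $\Sigma$. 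The main obstacle I foresee is the precise bookkeeping in the integration-by-parts step, where the Robin coefficient $q$, the geodesic curvature of $\partial\Sigma$ in $\Sigma$, and the trigonometric identities involving $\theta$ must match up cleanly so that the single quantity $H_{\partial M}+H_\Sigma\cos\theta$ emerges; a secondary difficulty is extending Huber's theorem to the Lipschitz doubled metric in the noncompact boundary case, where the sign hypothesis is precisely what renders the singular curvature tractable.
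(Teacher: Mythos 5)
Your proposal replaces the paper's conformal-change argument (following Fischer-Colbrie) with a direct integration-by-parts route in the style of the Schoen--Yau and Colding--Minicozzi treatments of stable minimal surfaces in $\R^3$. Both start from the same ingredients --- a positive Jacobi function $u$ on $\Sigma\setminus K$ with the Robin condition $\partial_\eta u = qu$, the Gauss equation, and the identity $q + \kappa_{\partial\Sigma} = (H_{\partial M} + H_\Sigma\cos\theta)/\sin\theta$ from \eqref{eq: q geodesic curvature} --- but they diverge immediately afterwards, and the divergence is where your plan breaks down.

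Two steps do not go through as written. First, the log-cutoff argument requires an a priori subquadratic area-growth bound, which you invoke but never establish; in a general ambient $M$ with boundary there is no soft argument producing it, and getting it is essentially as hard as what the theorem asserts. Likewise, the claim that ``finiteness of $\chi(\mathrm{supp}(\phi_j))$ follows from the finite index'' is unjustified and potentially circular: finite topology is a \emph{conclusion} of the theorem, not an input. The paper sidesteps both by conformally changing by a positive function $v$ chosen so the new metric has nonnegative Gaussian curvature outside a compact set \emph{and} totally geodesic boundary (equations \eqref{eq:gauss curvature change under conformal change}--\eqref{eq: geodesic curvature conformal change}), after which Huber's theorem and the Cohn--Vossen inequality deliver finite topology, the conformal type of the ends, and the integral bound without any area or topology bound in advance. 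Second, the doubling is incorrect as stated. You write that the Lipschitz doubled metric has nonnegative distributional curvature along the seam because ``$\kappa_g(\partial\Sigma) = (H_{\partial M}+H_\Sigma\cos\theta)/\sin\theta \ge 0$,'' but this formula does \emph{not} give $\kappa_{\partial\Sigma}$ in the original induced metric: from \eqref{eq: q geodesic curvature}, $\kappa_{\partial\Sigma} = (H_{\partial M}+H_\Sigma\cos\theta)/\sin\theta - q$, and $q$ involves $h_{\partial M}(T,T)$ and $A_\Sigma(\eta,\eta)$ with no sign under the hypotheses. What you wrote is the geodesic curvature $\hat\kappa_{\partial\Sigma}$ of $\partial\Sigma$ in the conformal metric $u^2 ds^2$. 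The paper therefore doubles a \emph{different} conformal metric $v^2 ds^2$, chosen precisely so that $\tilde\kappa_{\partial\Sigma}\equiv 0$; the doubled metric is then $C^2$ rather than merely Lipschitz, and the standard Huber theorem applies directly, avoiding the bounded-curvature-measure extension you would otherwise need.
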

Let $\overline{\Sigma}$ be the compact Riemann surface that describes the topology of the noncompact capillary surface 
$\Sigma$ as in Theorem \ref{thm: Finite topology intro}. Points removed from the interior of 
$\overline{\Sigma}$ are associated to \textit{interior ends} of $\Sigma$, and points removed from the boundary of
$\overline{\Sigma}$ are associated to \textit{boundary ends} of $\Sigma$. These are described in more details in Remark \ref{rmk: description of conformal structure} and Remark \ref{rmk: description of ends}. Two corollaries of Theorem \ref{thm: Finite topology intro} are proven. Namely, Corollary \ref{corollary: capillary must be minimal} gives conditions under which noncompact capillary surfaces in domains of $\R^3$ are minimal, and Corollary \ref{cor: discription strongly stable} describes the conformal type of strongly stable capillary surfaces under suitable curvature assumptions.

Capillary surfaces in a half-space of $\R^3$ are particularly important because they serve as local models for  capillary surfaces near the boundary. This fact will be essential to the proof of our curvature bounds (Theorem \ref{curvature bound-introduction}). In particular, we will need to understand the strongly stable capillary surfaces in a half-space of $\R^3$. As observed in Remark \ref{rmk: compact surface in half-space}, there are no compact minimal capillary surfaces in a half-space of $\R^3$, so in this context it is essential to consider noncompact capillary surfaces in it.

The only strongly stable complete CMC surfaces in $\R^3$ are planes, this was first proven for two-sided minimal surfaces independently by do Carmo-Peng \cite{doCarmo-Peng}, Fischer-Colbrie-Schoen \cite{Fischer-Colbrie-Schoen-The-structure-of-complete-stable} and Pogorelov \cite{Pogorelov-stable}, later for CMC surfaces by da Silveira \cite{daSilveira-Stability-of-complete} and for one-sided minimal surfaces by Ros \cite{Ros-onesided-minimalsurface}. It is then natural to ask whether half-planes intersecting the boundary of a half-space at a constant angle $\theta$ are the only strongly stable capillary surfaces immersed in a half-space. 
When the angle is $\pi/2$, namely, the surface has free boundary, we are able to reflect the surface to get a smooth strongly stable complete noncompact CMC surface, thus the result in the case when the surface has no boundary is helpful to study the free boundary case. In particular, it has been proven by Ambrozio-Buzano-Carlotto-Sharp \cite{Ambrozio-Buzano-Carlotto-Sharp}*{Corollary 2.2} that a stable free boundary minimal surface in a half-space  must be a half-plane. When the angle is not $\pi/2$, the reflection analysis apparently fails, moreover, the boundary term in the second variation formula does not vanish. Nevertheless, we are able to show:

\begin{theorem}\label{maintheorem1}
Let $\Sigma$ be a noncompact capillary surface immersed in a half-space of $\R^3$ at constant angle $\theta$.
Assume that $H_\Sigma\cos \theta \ge 0$. Then $\Sigma$ is weakly stable if and only if it is a half-plane.
\end{theorem}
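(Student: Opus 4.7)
The easy direction is immediate: if $\Sigma$ is a half-plane meeting $\partial M$ at angle $\theta$, then $\Sigma$ is totally geodesic ($|A_\Sigma| \equiv 0$, $H_\Sigma = 0$) and the capillary boundary term in the second variation vanishes (since $\partial M$ is flat and $II_\Sigma \equiv 0$). Hence $Q(u,u) = \int_\Sigma |\nabla u|^2\, dA \ge 0$ for every admissible variation, and $\Sigma$ is weakly (in fact strongly) stable.

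For the converse, suppose $\Sigma$ is weakly stable. Since the weak and strong indices differ by at most one, $\Index_s(\Sigma) \le 1$, so $\Sigma$ has finite index. Because $M$ is a half-space of $\mathbb{R}^3$, we have $R_M \equiv 0$ and $H_{\partial M} \equiv 0$, so the hypothesis $H_\Sigma\cos\theta \ge 0$ puts us exactly in the setting of Theorem \ref{thm: Finite topology intro}. Applying that theorem, $\Sigma$ has finite topology, is conformally a compact Riemann surface with boundary punctured at finitely many points, and
\[
\int_\Sigma \bigl(H_\Sigma^2 + |A_\Sigma|^2\bigr)\, dA + \int_{\partial\Sigma} H_\Sigma\cos\theta\, d\ell < \infty.
\]

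The central task is now to show $|A_\Sigma| \equiv 0$. The plan is to insert carefully chosen test functions into the weak stability inequality, in the spirit of da Silveira's classification of complete stable CMC surfaces in $\mathbb{R}^3$. Concretely, one combines Simons' identity for CMC surfaces with the refined Kato inequality to bound $\int_\Sigma \phi^2 |A_\Sigma|^4\, dA$ from above by a gradient term of a cutoff $\phi$, which vanishes in the limit as $\phi$ becomes a logarithmic cutoff along each end, thanks to the finite total curvature just obtained. Since $\Sigma$ is only \emph{weakly} stable, the candidate test functions must be corrected by an additive constant to have zero mean; the correction tends to zero as the cutoff expands because of the $L^1$ bound on $|A_\Sigma|^2$. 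The capillary boundary term produced in this process is controlled using the hypothesis $H_\Sigma\cos\theta \ge 0$ together with its finite integral along $\partial \Sigma$: these are exactly what is needed to absorb the boundary contribution on the correct side of the inequality.

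Once $|A_\Sigma| \equiv 0$, the surface $\Sigma$ lies in an affine plane $P \subset \mathbb{R}^3$; since $P$ meets $\partial M$ at angle $\theta \in (0,\pi)$, the intersection $P \cap \partial M$ is a straight line, and $\Sigma \subset P \cap M$ is a connected noncompact totally geodesic surface with boundary contained in $\partial M$, which forces $\Sigma$ to be the entire half-plane in $P$ bounded by that line. The main obstacle in this outline is clearly the test function analysis: the mean-zero constraint from weak (rather than strong) stability requires the classical da Silveira/Simons argument to be modified, and the capillary boundary term, of a priori indefinite sign, must be tamed using the sign hypothesis $H_\Sigma\cos\theta \ge 0$ at precisely the right step.
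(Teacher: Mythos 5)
Your proposal diverges from the paper's proof in its central mechanism, and the divergence exposes a genuine gap. The paper does not use Simons' identity, a refined Kato inequality, or a da~Silveira-style argument at all. Instead it constructs the specific function $u = \frac{1}{\sin\theta} + \cot\theta\langle\nu, -E_3\rangle$, whose defining feature is that it satisfies the Robin condition $\frac{\partial u}{\partial\eta} = qu$ along $\partial\Sigma$ (because $\eta$ is a principal direction and $u|_{\partial\Sigma} = \langle\eta, -E_3\rangle = \sin\theta$). Plugging $v = u\psi$ with $\psi$ a cutoff into $Q$, the boundary integral reduces to $\int_{\partial\Sigma} u\psi^2(\frac{\partial u}{\partial\eta}-qu)$, which vanishes \emph{identically} — no sign argument on the boundary is needed. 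The interior term becomes $-\int_\Sigma \psi^2 u|A_\Sigma|^2/\sin\theta$ by the identity $\Delta u + |A_\Sigma|^2 u = |A_\Sigma|^2/\sin\theta$, and this is dominated over a large compact core once the Dirichlet energy of $\psi$ on the cylindrical ends is controlled by conformal invariance. The mean-zero constraint is handled not by an additive constant but by interpolating a one-parameter family of cutoffs $\psi_{a_0,b}$ so that $\int u\psi_{a_0,b_0} = 0$ exactly.

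Your plan, by contrast, asserts that the boundary term ``is controlled using the hypothesis $H_\Sigma\cos\theta\ge 0$ together with its finite integral along $\partial\Sigma$,'' but this is precisely where the argument breaks. In a half-space, equation \eqref{eq: q geodesic curvature} gives $q = \cot\theta H_\Sigma - \kappa_{\partial\Sigma}$, so the boundary contribution $-\int_{\partial\Sigma} q\varphi^2$ contains the term $+\int_{\partial\Sigma}\kappa_{\partial\Sigma}\varphi^2$, whose sign and size are not controlled by either $H_\Sigma\cos\theta\ge 0$ or the integrability from Theorem \ref{thm: Finite topology intro}. A Simons/Kato computation produces a boundary term weighted by $|A_\Sigma|^2$ on $\partial\Sigma$, and you have not indicated how to tame it; the paper's construction is designed specifically so that this issue never arises. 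Separately, the ``additive constant'' correction for weak stability is not viable as stated: since $\Sigma$ has infinite area the natural mean-zero correction is not a constant on $\Sigma$, and making it compactly supported reintroduces gradient terms that do not obviously vanish. You should look at how the paper chooses $u$ with the Robin boundary property and how the interpolation in $b$ is used to achieve $\int_\Sigma v = 0$ without any constant correction.
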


It is easy to see that the assumption $H_\Sigma \cos \theta \ge 0$ translates to $\Sigma$ has nonzero mean curvature and $\theta \in (0, \pi/ 2]$ or $\Sigma$ is minimal. We will also observe in Remark \ref{remark: regidity compact case} that there is no weakly stable noncompact 
capillary surface with compact boundary immersed in a half-space of $\R^3$, even with $H_\Sigma \cos \theta  < 0$.

In order to prove Theorem \ref{maintheorem1}, we first construct a test function involving angle $\theta$. Thanks to Theorem \ref{thm: Finite topology intro}, we can construct a cutoff function such that its multiplication with $u=\frac{1}{\sin\theta}+\cot\theta\ll \nu,-E_3\rl$ becomes a admissible test function. This function $u$ is designed to make sure that the boundary integral in the quadratic form vanishes since we have no control on the sign or size of the boundary integral when the boundary is noncompact. If the surface is not totally geodesic, we can show that this test function leads to negative second variation, thus contradicting weak stability. When $\theta=\pi/2$, the function $u$ is constant. In particular, in the case where the surface is assumed to have free boundary, these arguments give a new and direct proof to some results in \cite{Ambrozio-Buzano-Carlotto-Sharp}.

Notice that there are noncompact capillary surfaces in a half-space besides half-planes. For any angle $\theta\in (0,\pi)$ and any constant $H>0$, one can construct a noncompact capillary surface with mean curvature $H$ and contact angle $\theta$ by cutting an unduloid horizontally (by a plane orthogonal to its axis). Noncompact free boundary surfaces with any constant mean curvature can also be obtained by cutting an unduloid vertically (by a plane that contains its axis). These examples however have infinite index. Examples of minimal capillary surfaces with finite index and any contact angle $\theta \in (0, \pi)$ can be obtained by cutting a catenoid horizontally. Cutting a catenoid vertically gives a noncompact free boundary minimal surface with index one. Observe that another significant difference between cutting a catenoid horizontally or vertically is that the former has compact boundary and one interior ends while the latter has no interior ends and two boundary ends.

\subsection{Curvature estimates for strongly stable capillary surfaces} Curvature estimates for strongly stable CMC surface play an important role in the study of these surfaces. In 1983, Schoen \cite{Schoen-Estimates-for-stable-min-surf} proved that the second fundamental form of a strongly stable two-sided minimal surface immersed in a 3-manifold is bounded above by an universal constant multiplying the reciprocal of the distance to the boundary, and this constant only depends on the curvature of the ambient manifold and its covariant derivative. In particular, he proved that 
\begin{equation}\label{Schoen-curvature-estimate-in-R3}
|A_\Sigma(p)|\leq \frac{C}{d_{\Sigma}(p,\partial\Sigma)}\qquad \forall\  p \in \Sigma,
\end{equation}
where $\Sigma$ is a strongly stable two-sided minimal surface immersed in $\mathbb{R}^3$, $C>0$ is an universal constant, $A_\Sigma$ is the second fundamental form of $\Sigma$ and $d_\Sigma(p, \partial \Sigma)$ is the intrinsic distance from $p \in \Sigma$ to the boundary of $\Sigma$. Note that this curvature estimate gives a different proof to the fact that a two-sided stable complete minimal surface in $\mathbb{R}^3$ must be a plane.

Schoen's curvature estimates have many generalizations. Those more relevant to our work are the generalization to strongly stable free boundary minimal surfaces by Guang-Li-Zhou \cite{Zhou-Curvature-estimates-for-stable-fbms} and the generalization to strongly stable immersed CMC surfaces by Rosenberg-Souam-Toubiana \cite{Rosenberg-General-Curvature-Estimates}.

An estimate like \eqref{Schoen-curvature-estimate-in-R3} cannot hold for strongly stable CMC surfaces in general 3-manifolds. For instance, horospheres are strongly stable complete CMC surfaces in hyperbolic 3-space but are not totally geodesic. However, Rosenberg-Souam-Toubiana \cite{Rosenberg-General-Curvature-Estimates} showed that far away from its boundary, the second fundamental form of a strongly stable \textit{immersed} CMC surface is bounded above by an universal constant $C$ that does not depend on the ambient space or the surface divided by the square root of an absolute sectional curvature bound of the ambient space. More precisely, they obtained the following curvature estimate: There is a constant $C$ such that for any complete 3-manifold $M$ with bounded sectional curvature, i.e., $|K_M|<\Lambda$, and for any strongly stable two-sided CMC surface $\Sigma$ immersed in $M$
\begin{equation}\label{Rosenberg-curvature-estimate}
|A_\Sigma(p)|\leq \frac{C}{\min\{d_\Sigma(p,\partial\Sigma),(\sqrt \Lambda)^{-1}\}} \qquad \forall\ p \in \Sigma.
\end{equation}
In order to show this inequality Rosenberg, Souam and Toubiana use a blow-up procedure with harmonic coordinates. Using harmonic coordinates usually requires a lower injectivity radius bound, in order to avoid any assumptions on injectivity radius  they first pull a neighborhood of the ambient manifold back to its tangent space through the exponential map, where there are lower injectivity radius bounds that depend only on $\Lambda$. In the cases we are interested in, the ambient space has a boundary. Therefore some dependency of the curvature bound on the injectivity radius of the ambient manifold seems to be inevitable, otherwise the ambient manifold might ``disappear'' in the blow-up process.

When $\Sigma$ is a strongly stable \textit{edged} capillary surface (that is, a capillary surface that might have a fixed boundary in the interior of $M$, see the definition in Section \ref{subsection-edged-surfaces}) \textit{immersed} in a 3-manifold $M$ with \textit{bounded geometry} (that is, there are positive $\Lambda$ and $\iota$ such that $|K_M|\leq \Lambda, |h_{\partial M}| \le \Lambda$ and $M$ has injectivity radius at least $\iota$ in a sense defined in Section \ref{sec: curv bounds}), we show:
\begin{theorem}\label{curvature bound-introduction}
Let $\theta \in (0, \pi)$. Then there is a constant $C = C(\theta)$ such that the following holds:
Let $M$ be a 3-manifold with smooth boundary. Assume that $M$ has
curvature bounded above by $\Lambda$ and injectivity radius bounded bellow by $\iota$.
Let $\Sigma$ be a strongly stable edged capillary surface immersed in $M$ at a constant angle $\theta$. Then, 
\begin{itemize}[leftmargin=*]
    \item if
$H_\Sigma \cos \theta \ge 0$, we have
\[|A_{\Sigma}(p)|\min\{d_{\Sigma}(p, \partial\Sigma \setminus 
\partial M), \iota, (\sqrt{\Lambda})^{-1}\}\le C\]
for all $p \in \Sigma$; and
\item if 
$H_\Sigma \cos \theta < 0$, we have
\[|A_{\Sigma}(p)|\min\{d_{\Sigma}(p, \partial\Sigma \setminus 
\partial M), \iota, (\sqrt{\Lambda})^{-1}, -(H_\Sigma\cos \theta)^{-1}\}\le C\]
for all $p \in \Sigma$.
\end{itemize}
\end{theorem}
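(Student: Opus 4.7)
The plan is to argue by contradiction via a blow-up procedure, in the spirit of Schoen and of Rosenberg-Souam-Toubiana, but crucially using Theorem \ref{maintheorem1} to rule out the possible boundary-type limit surfaces. Suppose the estimate fails. Then there exist sequences of 3-manifolds $M_k$ with bounded geometry constants $\Lambda_k,\iota_k$, strongly stable capillary immersions $\Sigma_k \hookrightarrow M_k$ at constant angle $\theta$, and points $p_k \in \Sigma_k$ for which
\[r_k := \min\{d_{\Sigma_k}(p_k,\partial\Sigma_k\setminus\partial M_k),\,\iota_k,\,\Lambda_k^{-1/2},\,(-H_{\Sigma_k}\cos\theta)^{-1}\}\]
satisfies $|A_{\Sigma_k}(p_k)|\,r_k \to \infty$, with the last entry dropped when $H_{\Sigma_k}\cos\theta \ge 0$. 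After a standard point-picking lemma, I may also assume $|A_{\Sigma_k}|$ is essentially maximized at $p_k$ on an intrinsic ball whose radius measured in units of $|A_{\Sigma_k}(p_k)|^{-1}$ diverges. Rescaling the ambient metric by $\lambda_k := |A_{\Sigma_k}(p_k)|^2$ yields surfaces $\tilde\Sigma_k$ with $|A_{\tilde\Sigma_k}(p_k)|=1$ and $|A_{\tilde\Sigma_k}|\le 2$ on large intrinsic balls; the rescaled ambient sectional curvature and boundary second fundamental form tend to zero, the rescaled injectivity radius and rescaled distance to $\partial\Sigma_k\setminus\partial M_k$ diverge, and, critically, the rescaled mean curvature $H_{\tilde\Sigma_k}=H_{\Sigma_k}/\sqrt{\lambda_k}$ has a limit $H_\infty$ with $H_\infty\cos\theta \ge 0$: in the case $H_{\Sigma_k}\cos\theta\ge 0$ this is immediate, and in the opposite case the inclusion of $-(H_{\Sigma_k}\cos\theta)^{-1}$ in $r_k$ forces $H_{\tilde\Sigma_k}\cos\theta\to 0$.

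Next, I would split into two cases according to whether $d_{M_k}(p_k,\partial M_k)\sqrt{\lambda_k}$ is unbounded or bounded along a subsequence. In the unbounded case, pull the rescaled metric back via the exponential map of $M_k$ at $p_k$; the rescaled bounds on curvature and injectivity radius yield smooth Cheeger-Gromov convergence to flat $\R^3$, and uniform bounds on $|A|$ together with the standard graphical compactness for immersed CMC surfaces produce a complete, two-sided, immersed, strongly stable CMC limit $\Sigma_\infty\subset \R^3$ with $|A_{\Sigma_\infty}(p_\infty)|=1$, contradicting the theorem of do Carmo-Peng, Fischer-Colbrie-Schoen and da Silveira. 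In the bounded case, choose $q_k\in\partial M_k$ nearest to $p_k$ and pull the rescaled metric back via Fermi coordinates along $\partial M_k$ based at $q_k$; the bounds on $|K_M|$, $|h_{\partial M}|$, and $\iota_k$ (combined) guarantee that these coordinates are defined on balls of uniformly large rescaled size and that the rescaled metric converges smoothly to the flat metric on a closed half-space of $\R^3$. The corresponding limit $\Sigma_\infty$ is then a complete noncompact strongly (in particular weakly) stable capillary surface in this half-space at angle $\theta$, with $H_{\Sigma_\infty}\cos\theta\ge 0$; by Theorem \ref{maintheorem1} it must be a half-plane, contradicting $|A_{\Sigma_\infty}(p_\infty)|=1$.

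The main technical obstacle will be the boundary blow-up in the second case: one must verify that the Fermi-coordinate pullbacks converge smoothly to a genuine half-space model, that the capillary angle, the orientation, and stability pass to the limit, and that $\Sigma_\infty$ actually has nonempty boundary intersecting $\partial\R^3_+$ (rather than drifting off into the interior). This is where all three inputs in the bounded geometry hypothesis are used: the sectional curvature bound and injectivity radius bound control the interior geometry of $M_k$, while the bound on the second fundamental form of $\partial M_k$ controls the Fermi charts and prevents the boundary from developing singular behavior. A secondary point is bookkeeping for the last term in $r_k$ so that in the case $H_{\Sigma_k}\cos\theta<0$ the limit still satisfies $H_\infty\cos\theta\ge 0$ as required by Theorem \ref{maintheorem1}; this is ensured precisely by the scale $(-H_{\Sigma_k}\cos\theta)^{-1}$ being included in the minimum defining $r_k$.
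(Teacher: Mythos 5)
Your outline follows the same contradiction/blow-up strategy as the paper, and correctly identifies the role of Theorem \ref{maintheorem1} (uniqueness of the half-plane) as the endgame in the boundary case. But there are a few places where your setup diverges from the paper's in ways that matter.

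First, the case split. You split according to whether $d_{M_k}(p_k, \partial M_k)\sqrt{\lambda_k}$ is bounded or not, i.e., the \emph{ambient} distance to $\partial M_k$. The paper splits on $\lambda_n\, d_{\Sigma_n}(p_n, \partial\Sigma_n)$, the \emph{intrinsic} distance to $\partial\Sigma_n$. These do not coincide: a capillary surface can hug $\partial M$ extrinsically while $\partial\Sigma$ remains far away intrinsically (and a capillary surface only touches $\partial M$ along $\partial\Sigma$). In your ``bounded'' case the blow-up limit may therefore be a complete CMC surface sitting inside a half-space but with empty boundary, in which case Theorem \ref{maintheorem1} does not apply and you would instead need to fall back on the interior argument (Fischer-Colbrie--Schoen / do Carmo--Peng / da Silveira). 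You flag ``$\Sigma_\infty$ actually has boundary'' as something to \emph{verify}, but it is not a verification: the case split as you have written it does not guarantee it, so the split itself has to be reorganized along the paper's intrinsic criterion (or your Case~2 must bifurcate further).

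Second, passing stability to the limit is subtler than ``stability passes to the limit.'' The paper does not claim the limit is strongly stable: the limit is an immersion, and its index near self-intersections or where it touches $\partial\mathcal{H}$ in its interior is not well-defined. Instead the paper passes to the universal cover of the limit and constructs a \emph{positive Jacobi function} there by transporting the positive Jacobi functions from $\Delta_n$ via nearby-graph parametrizations, and only then invokes Proposition \ref{prop: positive jacobi field} and the classification theorems. You should make that substitution rather than asserting stability of the limit.

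Third, minor issues. The paper uses harmonic coordinate charts in the sense of Anderson, which give $C^{1,\alpha}$ control of the metric uniformly in both interior and boundary regimes and mesh cleanly with the Schauder theory needed to upgrade the graphs to $C^{2,\alpha}$; your exponential-map/Fermi-coordinate version is defensible but you will have to supply the corresponding regularity of the Fermi charts near $\partial M$ yourself. Finally, in the boundary case the paper also rules out a \emph{compact} half-space limit by Remark \ref{remark: regidity compact case} before invoking Theorem \ref{maintheorem1}; this is a small but necessary step that your proposal omits. Your bookkeeping of the $(-H\cos\theta)^{-1}$ scale so that $H_\infty\cos\theta \ge 0$ is correct and matches the paper.
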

Notice that the constant $C$ only depends on $\theta$ but is independent of $\Sigma$ and $M$. Since these bounds do not depend of the area of the surface, to the best of the authors' knowledge they are also new for the special case where $\Sigma$ is an immersed strongly stable free boundary minimal surface, and hence it gives a proof to a conjecture of Guang-Li-Zhou \cite{Zhou-Curvature-estimates-for-stable-fbms}*{Conjecture 1.4}.

When $\theta \in (0, \pi/2]$ and the mean curvature $H_\Sigma$ of $\Sigma$ is large in comparisons with $\iota^{-1}$ and $\Lambda$,
it is possible to show that there is a bound on the diameter of $\Sigma$ that depends linearly on its strong index, in fact this is the content of Corollary \ref{cor:diameter bounds}.
In the special case where the ambient manifold $M$ is a half-space of $\R^3$, $\iota^{-1}$ and $\Lambda$ can be taken to be arbitrarily small, so the bounds in Corollary \ref{cor:diameter bounds} are valid for all $H_\Sigma >0$ as long as $\theta \in (0, \pi/2]$.

\subsection{Organization of the paper}
In section \ref{sec: prelim}, we give the basic definitions for compact and noncompact capillary surfaces in 3-manifolds with boundary, introduce two different notions of index, and recall some backgrounds on harmonic vector fields. In section \ref{sec: index estimates for compact capillary surfaces}, we prove index estimates (Theorem \ref{index estimate free boundary-introduction}) for compact capillary surfaces in general 3-manifolds and also prove Corollary \ref{particular case in R3}. In section \ref{sec: structure of noncompact capillary surfaces with finite index}, we first study the structure of noncompact capillary surfaces with finite index (Theorem \ref{thm: Finite topology intro}). We then prove two interesting corollaries (Corollary \ref{corollary: capillary must be minimal} and Corollary \ref{cor: discription strongly stable}) and the uniqueness result (Theorem \ref{maintheorem1}). We also prove Proposition \ref{prop: L2 characterization} which is of independent interest. Finally, in section \ref{sec: curv bounds}, we prove the curvature estimates for strongly stable capillary surfaces (Theorem \ref{curvature bound-introduction}). 

\subsection{Acknowledgements}
The authors would like to thank Prof. Ailana Fraser and Prof. Davi Maximo for their valuable comments and suggestions. The second named author would also like to thank Davi Maximo for enlightening conversations about \cite{Fischer-Colbrie-On-complete-minimal}.

\section{preliminary}\label{sec: prelim}
Let $M$ be a three-dimensional Riemannian manifold with smooth boundary and let $\Sigma$ be a surface with smooth boundary. We assume for now that $\Sigma$ is complete in the metric sense, meaning that it contains its boundary. Let $X:\Sigma\looparrowright M$ be a two-sided isometric immersion such that $X(\partial\Sigma) = \partial M \cap X(\Sigma)$. Note that this implies $\partial M \cap X(\mathring \Sigma) = \varnothing$ where $\mathring \Sigma$ is the interior of $\Sigma$. Throughout the paper, we will
no longer write the immersion $X$ if it can be suppressed without ambiguity.

Let  $X(\cdot,t):\Sigma\times (-\epsilon,\epsilon)\rightarrow M$ be a compactly supported variation such that $X(\partial \Sigma,t)\subset \partial M$ for all $t \in (-\epsilon, \epsilon)$. We denote by $\nu$ the unit normal vector field of $\Sigma$ in $M$ and denote by $N$ the outer unit normal vector field of $\partial M$. We denote the unit normal vector field of $\partial\Sigma$ in $\partial M$ by $T$ and denote the conormal of $\Sigma$ by $\eta$. When $\Sigma$ is a surface with constant mean curvature, $\nu$ can be chosen such that the mean curvature as defined later is nonnegative. At a boundary point $p\in\partial\Sigma$, $T$ can be chosen such that $\{N,T\}$ has the same orientation as $\{\eta,\nu\}$ in $T(\partial\Sigma)^\perp$ (see Figure \ref{fig:angles}).

Let $\Omega \subset \Sigma$ be the support of the variation $X(\cdot, t)$, we consider following two functionals:
\[A(t)=\int_\Omega\ dA_t,\]
\[W(t)=\int_{[0,t]\times(\partial\Sigma \cap \Omega)}X^{*}\ dA_{\partial M},\]
where $dA_t$ is the area element with respect to the metric on $X(\Sigma,t)$, and $d A_{\partial M}$ is the area element of $\partial M$. $W(t)$ is usually called wetting area functional. 

Besides, we consider a (signed) volume functional defined by
\[V(t)=\int_{[0,t]\times \Omega}X^{*}\ dV_M,\]
where $dV_M$ is the volume element of $M$. When $V(t)=V(0)$ for all $t\in (-\epsilon,\epsilon),$ we say that the variation $X(\cdot,t)$ is volume-preserving.

For a real number $\theta\in(0,\pi)$, the energy functional $E(t):(-\epsilon,\epsilon)\rightarrow \mathbb{R}$ is defined by
\[E(t)=A(t)-\cos\theta \cdot W(t).\]
It is well known (see e.g. \cites{Ros-Souam-On-stability, Wang-Xia-Uniqueness-of-stable}) that
\begin{equation}\label{first variation of volume}
    V'(0)=\int_\Sigma \ll Y,\nu\rl,
\end{equation}
\begin{equation}\label{first variation of energy}
    E'(0)= - \int_\Sigma  \ll \vec{H}_\Sigma,Y\rl +\int_{\partial\Sigma} \ll Y,\eta-\cos\theta T\rl. 
\end{equation}
Here $\vec{H}_\Sigma= - H_\Sigma \nu$ where $H_\Sigma = \operatorname{tr} A_\Sigma$ is the trace of the second fundamental form $A_\Sigma(\cdot, \cdot) = \ll D_\cdot \nu, \cdot \rl$, here $\ll \cdot,\cdot\rl$ and $D$ are the Riemmanian metric and connection in $M$, respectively, and $Y$ is the variational vector field generated by $X(\cdot,t)$, i.e., $Y=\partial_tX|_{t=0}$. The integrals are taken with respect to the area element of $\Sigma$ and the length element of $\partial \Sigma$ respectively. When there is no ambiguity about which area or length element is used we will suppress it from the the integral.

\begin{figure}
    \centering
  \begin{tikzpicture}
  
 %coordinates
\coordinate (O) at (0,0);
\coordinate (N) at (0, -2);
\coordinate (T) at (2, 0);

%vectors
\draw[->](O)--(N) node[anchor = west]  {$N$};
\draw[->](O)--(T) node[anchor = south] {$T$};
\draw[->](O)--(-30:2) coordinate (eta) node[anchor = west]  {$\eta$};
\draw[->](O)--(60:2) node[anchor = west]  {$\nu$};
\draw[->](O)--(240:2) node[anchor = east]  {$-\nu$};

%angles
\draw (1,0) arc (0:-30:1);
\draw (0,-1) arc (270:240:1);
\node at (-15:1.2){$\theta$};
\node at (250:1.2){$\theta$};

%boundary
\draw[thick, dashed] (-3, 0) -- (3, 0);
\node at (-1.5, -0.2) {$\partial M$};

%surface
\clip (-3, 0) rectangle (0, 3);
\draw[thick] (-3, -5.2) circle (6);
\node at (-1.5, 0.8) {$\Sigma$};

\end{tikzpicture}
    \caption{Vectors and angles for a spherical cap in a half-space of $\R^3$.}
    \label{fig:angles}
\end{figure}
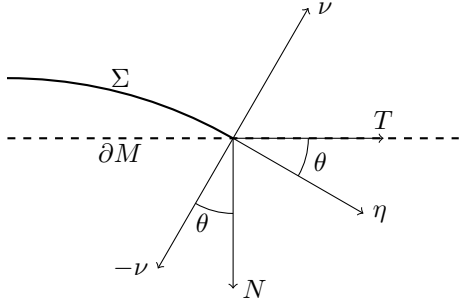

We say $\Sigma$ is capillary if it is a critical point of the energy functional $E(t)$ for any volume-preserving variation. It follows from equation $(\ref{first variation of energy})$ that $\Sigma$ is capillary if $\Sigma$ is a constant mean curvature (CMC) surface and $\Sigma$ intersects $\partial M$ at a constant angle $\theta.$ Here $\theta$ represents the angle between $N$ and $-\nu$ (same as the angle between $\eta$ and $T$). When $\theta=\pi/2$, $\Sigma$ is said to have free boundary. Note that by our definition a capillary surface is always \textit{two-sided},  complete in the sense of metric spaces, has smooth boundary that is entirely contained in $\partial M$ and can only touch $\partial M$ at its boundary.

\subsection{Second variation of compact capillary surfaces}\label{subsec: prelim compact}
Now assume that $\Sigma$ is compact. Let $X(\cdot, t)$ be a volume-preserving variation, that is, $\int_{\Sigma}u=0$ where $u= \ll Y, \nu \rl.$ Functions with zero integral are called \textit{admissible}, it is interesting to know that any admissible function is associated to a volume-preserving variation that maps $\partial \Sigma$ into $\partial M$ \cite{Ainouz-Souam-capillarysurfaceinslab}*{Proposition 2.1}. For a capillary surface $\Sigma$, the second derivative of the energy functional induces a quadratic form, called the \textit{stability operator}:
\begin{align}\label{second variation of energy}
    Q(u,u)
    &=\int_\Sigma |\nabla u|^2-(\operatorname{Ric}_M(\nu,\nu)+|A_\Sigma|^2)u^2 -\int_{\partial \Sigma}qu^2 \nonumber\\
    &=-\int_\Sigma u Ju+\int_{\partial\Sigma} u\left(\frac{\partial u}{\partial\eta}-qu\right),
\end{align}
where $J$ is the standard Jacobi operator, that is, 
\[J=\Delta+|A_\Sigma|^2+\operatorname{Ric}_M(\nu,\nu),\]
where $\operatorname{Ric}_M(\nu,\nu)$ is the Ricci curvature of $M$ along $\nu$ on $\Sigma$ and $q$ is defined by
\begin{equation}\label{eq: q std}
    q=\frac{1}{\sin\theta}h_{\partial M}(T,T)+\cot\theta A_\Sigma(\eta,\eta),
\end{equation}
where $h_{\partial M}(T,T)=\ll D_T N,T\rl$. Note that
\begin{equation}\label{eq: eta}
    \eta = \frac{1}{\sin \theta} N + \cot \theta \nu
\end{equation}
and let $H_{\partial M} = \textup{tr} h_{\partial M}$, then it follows from \eqref{eq: eta} that $q$ can be re-written as
\begin{equation}\label{eq: q geodesic curvature}
    q = \frac{1}{\sin\theta}H_{\partial M}+\cot\theta H_\Sigma - \kappa_{\partial\Sigma},
\end{equation}
where $\kappa_{\partial\Sigma}$ is the geodesic curvature of $\partial \Sigma$ in $\Sigma$, that is, $\kappa_{\partial\Sigma} = \ll D_\tau \eta, \tau \rl$ with $\tau$ unit tangent vector field in $\partial \Sigma$.

The weak (Morse) index of a compact capillary surface $\Sigma$, denoted by $\Index_w(\Sigma)$, is defined to be the maximal dimension of a subspace of $V=\{u\in C^\infty(\Sigma):\int_\Sigma u =0\}$ in which the quadratic form $Q$ is negative-definite. The weak index equals the number of negative eigenvalues of the following eigenvalue problem:
\begin{equation}\label{equations}
\begin{cases}
\tilde{J} u + \tilde \lambda u = 0 & \text{in}\ \Sigma\\
\frac{\partial u}{\partial \eta}=qu & \text{on}\ \partial \Sigma,
\end{cases}
\end{equation}
where $\tilde{J} u = J u -\frac{1}{|\Sigma|}\int_{\Sigma}J u$. The spectrum of $\tilde{J}$ consists of eigenvalues $\tilde\lambda_1\leq\tilde\lambda_2\leq\ldots$ corresponding to admissible eigenfunctions $\tilde\phi_1,\tilde\phi_2,\ldots$ in $C^\infty(\Sigma)$ (see e.g.\cite{Barbosa-Berard-twisted-eigenvalue}). Furthermore, the eigenvalues $\tilde\lambda_i$ satisfy the min-max characterization with respect to the quadratic form $Q$ when restricted to admissible functions.
That is, $\tilde \lambda_k=\inf \frac{Q(u,u)}{\int_\Sigma u^2}$, where the infimum is taken over all smooth admissible functions that are orthogonal to the first $k-1$ eigenfunctions $\tilde\phi_1,\ldots,\tilde\phi_{k-1}$.

The strong (Morse) index of $\Sigma$, denoted by $\Index_s(\Sigma)$, is the maximal
dimension of a subspace of $C^\infty(\Sigma)$ where $Q$ is negative-definite. It can be
characterized using eigenvalues in an analogous manner to the weak index by substituting
$\tilde J$ by $J$ and considering the set of all smooth functions instead of only admissible smooth functions.
It follows from the min-max principle that
\[\operatorname{Index}_w(\Sigma) \le \operatorname{Index}_s(\Sigma) \le
\operatorname{Index}_w(\Sigma) + 1.\]
 We say that $\Sigma$ is weakly (resp. strongly) stable if its weak (resp. strong) index is zero.

\subsection{Edged and noncompact capillary surfaces}\label{subsection-edged-surfaces}
An \textit{edged} surface $\Sigma$ immersed in a 3-manifold $M$ with smooth boundary is a surface whose boundary is not necessarily contained in $\partial M$, that is, $\partial\Sigma\setminus \partial M\neq \emptyset$ is possible. To be more precise, we say that a smooth surface $\Sigma$ immersed in $M$ is edged if:
\begin{itemize}
    \item $\Sigma$ does not touch $\partial M$ in its interior;
    \item $\Sigma$ contains the interior of $\Gamma = \partial M \cap \partial \Sigma$ but does not contain its \textit{edge} $\partial \Sigma \setminus \Gamma$;
    \item $\Gamma$ is smooth.
\end{itemize}

Assume $\Sigma$ is a two-sided edged surface and let $X(\cdot, t)$ be a variation supported in a compact subset of $\Sigma$, note that by our definition
this variation does not move a neighborhood of the edge. Define $W(t)$ and
$E(t)$ exactly as before, but changing $\partial \Sigma$ to $\Gamma$ everywhere. We say that $\Sigma$
 is an \textit{edged capillary surface} if it is a critical point of $E$ for any volume-preserving variation as
described above. This is equivalent to saying that $\Sigma$ has constant
mean curvature and intersects $\partial M$ at a constant angle $\theta$ along $\Gamma$.
Note that a capillary surface is an edged capillary surface with empty edge, and if $\Sigma$ is a capillary 
surface and $\Omega \subset \Sigma$ is an open set, then $\Omega$ is an edged capillary surface.

If $\Sigma$ is an edged capillary surface and $X(\cdot, t)$ is admissible, the second variation of
energy is given by a formula analogous to \eqref{second variation of energy}, the only distinction
being that $\partial \Sigma$ must be changed to $\Gamma $. When $\Sigma$ is bounded we can define the weak and the strong indices of $\Sigma$ in an analogous way as
we did for compact capillary surfaces, but now we only consider functions in $C^\infty_0(\Sigma)$, that is, smooth functions that vanish identically near the edge. %For clarity, we will write the set of compactly supported function on
%$\Sigma$ as
%\[C^\infty_{0}(\Sigma, \Gamma) = \{ u \in C^{\infty}(\Sigma) : \text{$u$ is compactly supported away
%from $\partial \Sigma \setminus \Gamma$ }  \}.\]
The weak and strong indices of $\Sigma$ are also characterized by eigenfunctions, the only difference is
that the boundary conditions in \eqref{equations} change from pure Robin boundary conditions to mixed Dirichlet and Robin boundary conditions:
\[
\begin{cases}
\frac{\partial u}{\partial \eta}=qu & \text{on}\ \Gamma\\
u = 0 & \text{on}\ \partial \Sigma \setminus \Gamma.
\end{cases}
\]

If $\Sigma$ is an unbounded edged capillary surface, we define the weak (and the strong) index of
$\Sigma$ in the following way: Let $\Omega_1 \subset \Omega_2 \subset \cdots \subset \Omega_n \subset
\cdots \subset \Sigma$ be an exhaustion of $\Sigma$ by bounded open sets. Since the weak index of $\Omega_n$ is a monotone quantity in $n$ we can define
\[ \operatorname{Index}_w(\Sigma) = \lim_{n \to \infty} \operatorname{Index}_w(\Omega_n).\]
Note that this quantity can be $+ \infty$, although this case is outside of the scope of this work.
It follows from standard arguments (see e.g. \cite{Fischer-Colbrie-On-complete-minimal}) that the limit in this definition
is independent of the choice of exhaustion, so the weak index is well-defined.
The definition of the strong index of $\Sigma$ is analogous.

It is clear that 
the weak and strong indices can differ by at most one, hence $\Sigma$ has finite weak index if and only if it has finite strong index. When these quantities are finite, we say that $\Sigma$ has finite index.
In the case where $\Sigma$ is a noncompact capillary surface (has empty edge), the strong index of $\Sigma$ can be
realized as the dimension of a space generated by $L^2$ eigenfunctions defined globally on $\Sigma$. This is the content of
Proposition \ref{prop: L2 characterization}.

\subsection{Harmonic one-forms and vector fields}

On an orientable surface $\Sigma$, let $$\Delta^{[1]}w=(d\delta+\delta d)w$$ be the Hodge Laplacian acting on one-forms. Here $d$ is the exterior differential and $\delta$ is the codifferential. A one-form $w$ is harmonic if $\Delta^{[1]}w=0$. In particular, if a one-form $w$ is closed and coclosed, i.e., $dw=\delta w=0$, then it is harmonic. However, the converse is not true for surfaces  with boundary.

The metric on $\Sigma$ induces a correspondence between one-forms and vector fields, let $\xi = w^\#$ be the vector field corresponding to the one-form $w$. Define the Hodge Laplacian on vector fields as $\Delta_{[1]} \xi=\Delta^{[1]} w$. We say $\xi$ is a harmonic vector field if $\Delta_{[1]}\xi=0$, i.e., the corresponding one-form $w$ is harmonic. Let $\star \xi=(\star w)^{\#}$ where $\star$ is the Hodge operator with respect to the metric on $\Sigma$. If $\operatorname{div}_\Sigma(\xi)=0$ and $\operatorname{div}_\Sigma (\star \xi)=0$, then $\xi$ is harmonic. This is because $\star dw =\operatorname{div}_\Sigma (\star \xi)$ and $\delta w=-\operatorname{div}_\Sigma ( \xi)$. The classical Weitzenbock's formula relates the Hodge Laplacian and rough Laplacian of a vector field, that is 
\begin{equation}\label{eq: Weitzenbock's formula}
    \Delta_{[1]}\xi=\nabla^{*}\nabla \xi+\operatorname{Ric}_\Sigma(\xi),
\end{equation}
where $\nabla^{*}\nabla \xi=-\sum_{i=1}^2\nabla_{e_i}\nabla_{e_i}\xi$ under a local geodesic frame $\{e_1,e_2\}$ and $\operatorname{Ric}_\Sigma(\xi)$ is defined by $\ll \operatorname{Ric}_\Sigma(\xi),X\rl=\operatorname{Ric}_{\Sigma}(\xi,X)$ for any vector field $X\in T\Sigma$.

When $\Sigma$ is compact with boundary, we will consider the following space:
\[\mathcal{H}^1_{T}(\Sigma,\partial\Sigma)=\{\xi\in T\Sigma:\ \operatorname{div}_\Sigma(\xi)=\operatorname{div}_\Sigma(\star\xi)=0 \ \text{on}\ \Sigma \ \text{and}\ \xi \ \text{is tangential along}\ \partial\Sigma \}.\]
It is known that $\operatorname{dim} \mathcal{H}^1_{T}(\Sigma,\partial\Sigma)=2g+r-1$ where $g$ is the genus and $r$ is the number of boundary components of $\Sigma$ (see \cites{AH2021, Pam-freeboundary-inball}).

The following Lemma will be used in the proof of Corollary \ref{particular case in R3}.

\begin{lemma}[\cite{Hodge-decomposition-Schwarz}*{Theorem 3.4.4}]
\label{lemma: max principle harmonic vector field}
Let $\Sigma$ be a complete connected, orientable Riemannian surface with non-empty boundary $\partial\Sigma$. If a harmonic vector field vanishes identically on $U\cap \partial \Sigma\neq \emptyset$ for some open subset $U\subset \Sigma$, then it vanishes identically on $\Sigma$.
\end{lemma}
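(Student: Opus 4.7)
The plan is to reduce the lemma to the identity theorem for holomorphic functions via the classical local identification of a harmonic field on an oriented Riemannian surface with a holomorphic function in isothermal coordinates. I would first pick a point $p\in U\cap\partial\Sigma$ and choose an isothermal chart sending a neighborhood of $p$ in $\Sigma$ to the closed upper half-disc $D^+\subset \C$, with $\partial\Sigma$ mapped onto the real diameter. Writing the $1$-form dual to $\xi$ as $w=a\,dx+b\,dy$ in these coordinates, the conformal invariance of $\star$ in dimension two gives $\star w=-b\,dx+a\,dy$, and the closed-and-coclosed conditions $dw=d(\star w)=0$ are precisely the Cauchy-Riemann equations for $f:=a-ib$, so $f$ is holomorphic on the open upper half-disc.

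Next, the hypothesis $\xi\equiv 0$ on $U\cap\partial\Sigma$ gives $a(x,0)=b(x,0)=0$ on an open real interval, so $f$ vanishes there. I would then extend $f$ to a function $\tilde f$ on the full open disc by Schwarz reflection, setting $\tilde f(z)=\overline{f(\bar z)}$ for $\operatorname{Im}z<0$: the extension is holomorphic on each open half-disc, continuous across the real diameter (both sides equal zero there), and hence holomorphic on all of $D$ by Morera's theorem. Since $\tilde f$ vanishes on an open real interval, the identity theorem forces $\tilde f\equiv 0$ on $D$, so $\xi$ vanishes on an open neighborhood of $p$ in $\Sigma$. To promote this to vanishing on all of $\Sigma$, I would invoke the interior unique continuation principle for harmonic $1$-forms on a connected surface: the set of points of $\Sigma$ admitting an open neighborhood on which $\xi$ vanishes is open by definition, nonempty by the previous step, and closed via an interior isothermal chart and the identity theorem at any accumulation point. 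Connectedness of $\Sigma$ then gives $\xi\equiv 0$.

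The main technical point to nail down is the existence of smooth isothermal coordinates up to the boundary, which is a classical fact for smooth Riemannian surfaces with smooth boundary but has to be invoked carefully. A conceptual subtlety worth flagging is that the argument relies on $\xi$ being a harmonic field in the sense of being closed and coclosed, not merely lying in the kernel of $\Delta_{[1]}$; examples such as $y\,dx$ on the upper half-plane show that pure Hodge-harmonicity alone does not yield unique continuation from a boundary arc. This poses no issue in the intended application to elements of $\mathcal{H}^1_T(\Sigma,\partial\Sigma)$, which are closed and coclosed by definition.
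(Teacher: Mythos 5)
Your proof is correct. The paper itself offers no proof of this lemma --- it is stated as a direct citation of Schwarz's Theorem 3.4.4 --- so your self-contained argument via boundary isothermal coordinates, the Cauchy--Riemann identification of a closed-and-coclosed $1$-form with a holomorphic function, Schwarz reflection across the real diameter, the identity theorem, and an open-and-closed propagation to all of $\Sigma$ fills a gap the paper leaves to the reference. This is essentially the standard complex-analytic route to boundary unique continuation for harmonic fields on a surface, and all the steps (in particular the claim that $dw=0$ and $d(\star w)=0$ are the Cauchy--Riemann equations for $f=a-ib$ in isothermal coordinates) check out.

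Your closing caveat is the most valuable part of the writeup and deserves emphasis. The paper's preliminaries define a harmonic vector field as one in $\ker \Delta_{[1]}$, and under that literal reading the lemma as printed would be \emph{false}: your example $w = y\,dx$ on the flat closed upper half-plane satisfies $\Delta^{[1]}w = 0$, vanishes identically on the boundary, yet is not identically zero. Schwarz's result concerns harmonic \emph{fields}, i.e.\ closed and coclosed forms, which on a surface with boundary form a strictly smaller space than $\ker \Delta^{[1]}$. Fortunately the only use of the lemma in the paper, in the proof of Corollary \ref{particular case in R3}, applies it to a nonzero $\xi \in \harmonicvfb(\Sigma,\partial\Sigma)$, and such $\xi$ satisfies $\operatorname{div}_\Sigma \xi = \operatorname{div}_\Sigma(\star\xi) = 0$ by definition, so it is a harmonic field and your proof applies directly. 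The discrepancy is a mild imprecision in how the lemma is phrased relative to the paper's own definition of ``harmonic,'' not a flaw in your argument; you are right to flag it.
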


\section{Index estimates for compact capillary surfaces}\label{sec: index estimates for compact capillary surfaces}
In this section we estimate the weak index of compact capillary surfaces in  $3$-manifolds with boundary. The results in this section generalize those in \cite{AH2021}. According to Lemma 3.1 in \cite{AH2021}, coordinates of $\xi\in \harmonicvfb(\Sigma,\partial \Sigma)$ are admissible functions. Here we will use $\ll\cdot,\cdot\rl$ and $D$ for the Euclidean product and connection respectively.
\begin{proposition}\label{calculation}
Let $M$ be a $3$-dimensional Riemannian manifold isometrically embedded in some Euclidean space $\real^d$. 
Let $\Sigma$ be a compact immersed capillary surface in $M$ at a constant angle $\theta$.
Given a vector field $\xi\in \harmonicvfb(\Sigma,\partial \Sigma)$, denote
\[u_j=\langle \xi,E_j\rangle,\]
where $\{E_j\}_{j=1}^{d}$ is the canonical basis of $\real^d$. Then
\begin{equation}\label{plugin}
\begin{aligned}
\sum_{j=1}^{d}Q(u_j,u_j) = &\int_{\Sigma}\sum_{i=1}^2|\sffM_M(e_i,\xi)|^2+|A_\Sigma(e_i,\xi)|^2- \int_\Sigma \frac{|A_\Sigma|^2+\scalar_{M}+H_\Sigma^2}{2}|\xi|^2 \\
   -&\frac{1}{\sin\theta}\int_{\partial\Sigma} H_{\partial M}|\xi|^2-\cot\theta\int_{\partial\Sigma}H_\Sigma|\xi|^2,
\end{aligned}
\end{equation}
%\end{eqnarray}
    where $\{e_i\}_{i=1}^2$ is an orthonormal frame on $\Sigma$, $\scalar_M$ is the scalar curvature of $M$, $\sffM_M$ is the second fundamental form of $M$ in $\mathbb{R}^d$ and $H_{\partial M}$ denotes the mean curvature of $\partial M$ with respect to the inner normal vector $-N$.

\end{proposition}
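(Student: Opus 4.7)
The plan is to sum the second variation (the formula for $Q$ recorded in Section 2.1) over $j$ and exploit two basic identities: $\sum_j u_j^2 = |\xi|^2$, and $\sum_j(e_i u_j)^2 = |D^{\mathbb{R}^d}_{e_i}\xi|^2$, where the second holds because each $E_j$ is parallel in $\mathbb{R}^d$, so $e_i u_j = \langle D^{\mathbb{R}^d}_{e_i}\xi, E_j\rangle$. Applying the Gauss formula successively to the chain of isometric immersions $\Sigma \hookrightarrow M \hookrightarrow \mathbb{R}^d$ gives the pointwise orthogonal decomposition
\[D^{\mathbb{R}^d}_{e_i}\xi \;=\; \nabla^\Sigma_{e_i}\xi \;-\; A_\Sigma(e_i,\xi)\,\nu \;+\; \sffM_M(e_i,\xi),\]
whose three summands lie respectively in $T\Sigma$, in the $\nu$-line of $(T\Sigma)^{\perp}\cap TM$, and in $(TM)^\perp\subset T\mathbb{R}^d$. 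Summing gives $\sum_j|\nabla^\Sigma u_j|^2 = |\nabla^\Sigma\xi|^2 + \sum_i|A_\Sigma(e_i,\xi)|^2 + \sum_i|\sffM_M(e_i,\xi)|^2$.

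Next, I would rewrite $\int_\Sigma|\nabla^\Sigma\xi|^2$ using the Weitzenb\"ock formula. Integration by parts yields $\int_\Sigma|\nabla^\Sigma\xi|^2 = \int_\Sigma\langle\nabla^{*}\nabla\xi,\xi\rangle + \int_{\partial\Sigma}\langle\nabla^\Sigma_\eta\xi,\xi\rangle$, and since $\xi$ is harmonic and $\operatorname{Ric}_\Sigma = K_\Sigma g$ on a surface, the identity recalled in Section 2.3 gives $\langle\nabla^{*}\nabla\xi,\xi\rangle = -K_\Sigma|\xi|^2$. The Gauss equation for $\Sigma\subset M$ reads $2K_\Sigma = R_M - 2\operatorname{Ric}_M(\nu,\nu) + H_\Sigma^2 - |A_\Sigma|^2$; adding this to the potential term $-(\operatorname{Ric}_M(\nu,\nu)+|A_\Sigma|^2)|\xi|^2$ from $Q$, the $\operatorname{Ric}_M(\nu,\nu)$ and $|A_\Sigma|^2$ contributions collapse algebraically to exactly $-\tfrac12(R_M+H_\Sigma^2+|A_\Sigma|^2)|\xi|^2$, which is the claimed interior integrand.

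The step I expect to require the most care is the pointwise boundary identity $\langle\nabla^\Sigma_\eta\xi,\xi\rangle = -\kappa_{\partial\Sigma}|\xi|^2$ on $\partial\Sigma$; it is here that the specific structure of $\harmonicvfb$ (rather than mere harmonicity) is used. To establish it, extend the boundary frame $\{\tau,\eta\}$ into a collar of $\partial\Sigma$ by parallel transport along the $\eta$-geodesics, so that $\nabla^\Sigma_\eta\tau = \nabla^\Sigma_\eta\eta = 0$ on $\partial\Sigma$, while $\nabla^\Sigma_\tau\tau = -\kappa_{\partial\Sigma}\eta$ and $\nabla^\Sigma_\tau\eta = \kappa_{\partial\Sigma}\tau$ hold there as usual. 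Write $\xi = \alpha\tau + \beta\eta$; tangentiality forces $\beta\equiv 0$ along $\partial\Sigma$, and in particular $\tau\beta = 0$ there. Evaluating the two conditions $\operatorname{div}_\Sigma\xi = 0$ and $\operatorname{div}_\Sigma(\star\xi) = 0$ on $\partial\Sigma$ (with $\star\xi = \alpha\eta - \beta\tau$) gives $\tau\alpha + \eta\beta = 0$ and $\alpha\kappa_{\partial\Sigma} + \eta\alpha - \tau\beta = 0$; the second of these forces $\eta\alpha = -\alpha\kappa_{\partial\Sigma}$, hence $\langle\nabla^\Sigma_\eta\xi,\xi\rangle = \alpha\,\eta\alpha = -\kappa_{\partial\Sigma}|\xi|^2$.

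Finally, I would substitute the identity $q = \tfrac{1}{\sin\theta}H_{\partial M} + \cot\theta\,H_\Sigma - \kappa_{\partial\Sigma}$ into the boundary contribution $\int_{\partial\Sigma}\langle\nabla^\Sigma_\eta\xi,\xi\rangle - \int_{\partial\Sigma}q|\xi|^2$; the two $\kappa_{\partial\Sigma}$ terms cancel exactly, leaving $-\tfrac{1}{\sin\theta}\int_{\partial\Sigma}H_{\partial M}|\xi|^2 - \cot\theta\int_{\partial\Sigma}H_\Sigma|\xi|^2$, which is the boundary part of the asserted identity.
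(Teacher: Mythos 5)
Your proposal is correct and follows essentially the same route as the paper's proof: decompose $D^{\mathbb{R}^d}_{e_i}\xi$ into tangential, $\nu$-, and $M$-normal parts, apply the Gauss equation and the Weitzenb\"ock formula with $\operatorname{Ric}_\Sigma = K_\Sigma g$, integrate by parts, and substitute the expression for $q$ in terms of $\kappa_{\partial\Sigma}$. The only variation is your derivation of the boundary identity $\langle\nabla^\Sigma_\eta\xi,\xi\rangle = -\kappa_{\partial\Sigma}|\xi|^2$: you evaluate $\operatorname{div}_\Sigma(\star\xi)=0$ on $\partial\Sigma$ in a frame extended by parallel transport along $\eta$-geodesics, whereas the paper obtains the same identity more directly from $d\xi^\flat(\eta,\xi)=0$ combined with tangentiality; since $d\xi^\flat=0$ and $\operatorname{div}_\Sigma(\star\xi)=0$ are the same condition, this is a cosmetic, not substantive, difference.
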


\begin{proof}
%From Lemma $\ref{findtestfunctions}$, we know that $u_j$ for $j=1,\cdots,d$ are admissible test functions.
Using a local orthonormal basis $\{e_i\}_{i=1}^2$ on $\Sigma$ we have
\[\nabla u_j=\sum_{i=1}^2e_i\langle \xi,E_j\rangle e_i=\sum_{i=1}^2\langle D_{e_i}\xi,E_j\rangle e_i.\]
Since
\[D_{e_i}\xi=\nabla_{e_i}\xi-A_\Sigma(e_i,\xi)\nu+\sffM_M(e_i,\xi),\]
it follows that
\begin{eqnarray}\label{111}
\sum_{j=1}^d|\nabla u_j|^2&=& \sum_{j=1}^d\sum_{i=1}^2|\langle D_{e_i}\xi,E_j\rangle|^2\nonumber\\
&=& \sum_{i=1}^2\sum_{j=1}^d \langle \nabla_{e_i}\xi,E_j\rangle^2+\langle A_\Sigma(e_i,\xi),E_j\rangle^2+\langle \sffM_M(e_i,\xi),E_j\rangle^2\nonumber\\
&=& |\nabla \xi|^2+\sum_{i=1}^2| A_\Sigma(e_i,\xi)|^2+ |\sffM_M(e_i,\xi)|^2.\label{gradient}
\end{eqnarray}
%In the third equality we have used the orthogonality. 
Gauss' equation for $\Sigma$ in $M$ gives us
\[2K_\Sigma=\scalar_M-2\operatorname{Ric}_M(\nu,\nu)-|A_\Sigma|^2+H_\Sigma^2.\]
Hence,
\begin{equation}\label{Ric}\int_\Sigma \operatorname{Ric}_M(\nu,\nu)u_j^2 =\int_\Sigma \left(\frac{\scalar_M}{2}-\frac{|A_\Sigma|^2}{2}+\frac{H_\Sigma^2}{2}-K_\Sigma\right)u_j^2.\end{equation}

It follows from Weitzenbock's formula \eqref{eq: Weitzenbock's formula} and the fact that $\xi$ is harmonic
%$\xi$ being harmonic vector field and $\Sigma$ being surface that
\[\nabla^*\nabla \xi=-K_\Sigma\xi.\]
By computing the exterior derivative along $\partial \Sigma$ we have
\[d\xi^\flat(\eta,\xi)=\langle \nabla_\eta\xi,\xi\rangle-\langle \nabla_\xi \xi,\eta\rangle.\]
Since $d\xi^\flat=0$,
\[\langle\nabla_\eta\xi,\xi\rangle=\langle\nabla_\xi\xi,\eta\rangle=-k_{\partial\Sigma}|\xi|^2,\]
then
\begin{equation*}%\label{eq: geodesic curvature and harm form}
\frac{\partial |\xi|^2}{\partial \eta} = -2\kappa_{\partial \Sigma}|\xi|^2.
\end{equation*}
It follows from the divergence theorem and (\ref{eq: q geodesic curvature}) that
\begin{eqnarray}\label{meancurvature}
\int_\Sigma \Delta|\xi|^2 &=& \int_{\partial \Sigma} \frac{\partial |\xi|^2}{\partial \eta}\nonumber\\
&=& -2\int_{\partial \Sigma}k_{\partial\Sigma}|\xi|^2\nonumber\\
&=& 2\int_{\partial\Sigma}q|\xi|^2-(\frac{1}{\sin\theta}H_{\partial M}+\cot\theta H_\Sigma)|\xi|^2
\end{eqnarray}
which together with 
\[\Delta |\xi|^2=-2\langle\nabla^*\nabla\xi,\xi\rangle+2|\nabla \xi|^2\]
implies that
\begin{eqnarray}\label{boundaryterm}\int_\Sigma|\nabla \xi|^2&=&\int_{\Sigma}\frac{\Delta|\xi|^2}{2}+\int_{\Sigma}\langle \nabla^*\nabla\xi,\xi\rangle\nonumber\\
&=&\int_{\partial \Sigma} q|\xi|^2-(\frac{1}{\sin\theta}H_{\partial M}+\cot\theta H_\Sigma)|\xi|^2-\int_\Sigma K_\Sigma|\xi|^2.\end{eqnarray}
Combining $(\ref{second variation of energy}), (\ref{Ric}), (\ref{boundaryterm})$ and $(\ref{111})$ gives the desired result.
\end{proof}

\begin{theorem}\label{main theorem boundary}
Let $M$ be a $3$-dimensional oriented Riemannian manifold with boundary isometrically embedded in $\real^d$ and let $\Sigma$ be a compact capillary surface immersed in $M$ at a constant angle $\theta$. 
Assume there exist a real number $\beta$ and a $q$-dimensional subspace $\mathbb{W}^q \subseteq \harmonicvfb(\Sigma, \partial \Sigma)$ such that any non-zero $\xi\in \mathbb{W}^q$ satisfies
\begin{equation*}
\begin{aligned}
\int_{\Sigma} & \sum_{i=1}^2|\sffM_M(e_i,\xi)|^2+|\sffM_M(e_i,\star\xi)|^2
-\int_\Sigma (R_{M} +H_\Sigma^2)|\xi|^2\\ &-\int_{\partial\Sigma} (\frac{2}{\sin\theta}H_{\partial M}+2\cot\theta H_\Sigma)|\xi|^2 <2\beta \int_{\Sigma}|\xi|^2.
\end{aligned}
\end{equation*}
Then
\[ \#  \{ \text{eigenvalues\ of\ $\tilde J$\ that\ are\ smaller\ than}\ \beta\} \geq \frac{q-d}{2d}.\]
\end{theorem}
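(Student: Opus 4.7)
The plan is to apply Proposition \ref{calculation} twice, once to $\xi$ and once to $\star \xi$, and add the two resulting identities. The first thing I would verify is that the proof of Proposition \ref{calculation} really does apply to $\star \xi$, even though $\star \xi$ is normal rather than tangential along $\partial \Sigma$. Inspecting that proof, the tangentiality of $\xi$ was used only to establish the boundary identity $\partial |\xi|^2/\partial \eta = -2\kappa_{\partial \Sigma}|\xi|^2$; but since $|\star \xi|^2 = |\xi|^2$ pointwise, the corresponding identity for $\star \xi$ is immediate, and the remaining steps depend only on $\star \xi$ being harmonic on $\Sigma$. Writing $u_j = \langle \xi, E_j\rangle$ and $\tilde u_j = \langle \star \xi, E_j\rangle$, adding the two identities and using the pointwise equality $\sum_i (|A_\Sigma(e_i, \xi)|^2 + |A_\Sigma(e_i, \star \xi)|^2) = |A_\Sigma|^2 |\xi|^2$ to cancel the $|A_\Sigma|^2 |\xi|^2$ terms, the combined expression becomes exactly the LHS of the hypothesis, yielding
\[\sum_{j=1}^d \bigl[Q(u_j,u_j) + Q(\tilde u_j, \tilde u_j)\bigr] < 2\beta \int_\Sigma |\xi|^2 = \beta \sum_{j=1}^d \left(\int_\Sigma u_j^2 + \int_\Sigma \tilde u_j^2\right).\]

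The main obstacle is that the $\tilde u_j$ need not be admissible. By Lemma 3.1 of \cite{AH2021} the $u_j$ are admissible, but the analogous argument for $\tilde u_j$ fails because the flux of $\star \xi$ across $\partial \Sigma$ is generally nonzero. I would handle this by imposing the $d$ linear constraints $\int_\Sigma \tilde u_j = 0$ on $\mathbb{W}^q$, producing a subspace $\mathbb{W}' \subseteq \mathbb{W}^q$ of dimension at least $q-d$ on which all $2d$ functions $u_j, \tilde u_j$ are admissible.

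To finish, I would run a standard min-max dimension count on the product space $V^{2d}$, where $V$ denotes the space of admissible smooth functions on $\Sigma$. The map $\xi \mapsto (u_1, \ldots, u_d, \tilde u_1, \ldots, \tilde u_d)$ is injective on $\mathbb{W}'$ (its first $d$ components already recover $\xi$), so its image $E \subseteq V^{2d}$ has dimension at least $q-d$. Assume toward contradiction that the number $N$ of eigenvalues of $\tilde J$ strictly less than $\beta$ satisfies $2dN < q - d$, and let $\{\tilde \phi_l\}_{l=1}^N$ be a corresponding orthonormal eigenbasis. The $L^2$-pairing map $E \to \mathbb{R}^{2dN}$ defined by $(f_1, \ldots, f_{2d}) \mapsto (\int_\Sigma f_k \tilde \phi_l)_{k,l}$ has nontrivial kernel, producing a nonzero $(f_1, \ldots, f_{2d}) \in E$ with each $f_k$ orthogonal in $L^2$ to every $\tilde \phi_l$. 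The min-max characterization of $\tilde \lambda_{N+1} \ge \beta$ then forces $Q(f_k, f_k) \ge \beta \int_\Sigma f_k^2$ for every $k$, contradicting the displayed inequality above. Therefore $N \ge (q-d)/(2d)$, as claimed.
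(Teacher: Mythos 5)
Your proposal is correct and follows essentially the same route as the paper's proof: applying Proposition \ref{calculation} to $\xi$ and $\star\xi$, using the pointwise identity $\sum_i |A_\Sigma(e_i,\xi)|^2 + |A_\Sigma(e_i,\star\xi)|^2 = |A_\Sigma|^2|\xi|^2$ to cancel the second fundamental form terms, and running a rank--nullity dimension count against the min-max characterization of the eigenvalues of $\tilde J$. The only cosmetic difference is that the paper packages the $d$ admissibility constraints on the $u_j^\star$ and the $2dk$ orthogonality constraints into a single linear map $\mathbb{W}^q \to \mathbb{R}^{2dk+d}$, whereas you impose the admissibility constraints first to pass to a subspace $\mathbb{W}'$ of dimension at least $q-d$ and then count orthogonality constraints there; the resulting dimension count is identical.
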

\begin{proof}
Denote $u_j^{\star}=\ll \star\xi,E_j\rl$. Since $|\star\xi|=|\xi|$, then by $(\ref{meancurvature})$,
$$\int_{\Sigma}\Delta|\star\xi|^2=2\int_{\partial\Sigma}q|\xi|^2-(\frac{1}{\sin\theta}H_{\partial M}+\cot\theta H_\Sigma)|\xi|^2.$$
Thus following the proof of Proposition \ref{calculation} gives
\begin{equation}\label{plugin3}
\begin{aligned}
\sum_{j=1}^{d}Q(u^\star_j,u^\star_j) = &\int_{\Sigma}\sum_{i=1}^2|\sffM_M(e_i,\star\xi)|^2+|A_\Sigma(e_i,\star\xi)|^2- \int_\Sigma \frac{|A_\Sigma|^2+\scalar_{M}+H_\Sigma^2}{2}|\xi|^2 \\
   -&\frac{1}{\sin\theta}\int_{\partial\Sigma} H_{\partial M}|\xi|^2-\cot\theta\int_{\partial\Sigma}H_\Sigma|\xi|^2.
\end{aligned}
\end{equation}
Note that whenever $\xi\neq 0$ we may pick $e_1=\frac{\xi}{|\xi|}, e_2=\frac{\star\xi}{|\star\xi|}$ as an orthonormal basis. We have at every point
\begin{equation}\label{keystep1}
\sum_{i=1}^2 |A_\Sigma(e_i,\xi)|^2+|A_\Sigma(e_i,\star\xi)|^2 = \sum_{i,j=1,2}|A_\Sigma(e_i,e_j)|^2|\xi|^2 = |A_\Sigma|^2|\xi|^2.
\end{equation}
Thus summing \eqref{plugin3} and \eqref{plugin} gives 
\begin{equation*}
\begin{aligned}
\sum_{j=1}^{d}Q(u_j,u_j)+Q(u^\star_j,u^\star_j) = &\int_{\Sigma}\sum_{i=1}^2|\sffM_M(e_i,\xi)|^2+|\sffM_M(e_i,\star\xi)|^2\\
 -& \int_{\Sigma}(R_{M} +H_\Sigma^2)|\xi|^2 -\int_{\partial\Sigma} (\frac{2}{\sin\theta}H_{\partial M}+2\cot\theta H_\Sigma)|\xi|^2 .
\end{aligned}
\end{equation*}

Let $k=\#\{ \text{eigenvalues\ of\ $\tilde J$\ that\ are\ smaller\ than}\ \beta\}$ and let $\tilde\phi_1,\ldots,\tilde\phi_k$ the eigenfunctions of $\tilde J$ corresponding to eigenvalues $\tilde\lambda_1\leq\ldots\leq\tilde\lambda_k<\beta$.
Consider the linear map defined by
\begin{eqnarray*}
F: &\mathbb{W}^q\longrightarrow &\mathbb{R}^{2dk+d}\\
& \xi \longmapsto &\left[\int_\Sigma u_j\phi_\alpha, \int_\Sigma u^\star_j\phi_\alpha,\int_{\Sigma} u_j^\star \right],
\end{eqnarray*}
where $\alpha=1,\ldots,k$ and $j=1,\ldots,d$.
By the Rank-Nullity Theorem, if $2dk+d<q$, then there exists a nonzero harmonic tangential vector field $\xi\in \mathcal{H}^1_T(\Sigma,\partial\Sigma)$ such that $u_j,u_j^\star$ are orthogonal to the first $k$ eigenfunctions of $\tilde J$. Moreover, Lemma 3.1 in \cite{AH2021} shows that $\int_\Sigma u_j=0$ for any $j=1,\ldots,d$. Thus
from the Min-max principle for $\tilde J$ it follows that
\[\sum_{j=1}^{d}Q(u_j,u_j)+Q(u^\star_j,u^\star_j)\geq 2\lambda_{k+1}\int_\Sigma|\xi|^2\geq 2\beta\int_\Sigma |\xi|^2\]
which contradicts the assumption in the proposition. Thus $2dk+d\geq q$, or, $k\geq \frac{q-d}{2d}$ as claimed.
\end{proof}

By setting $\beta=0$ in Theorem \ref{main theorem boundary}, we have an index estimate for compact capillary surfaces.
\begin{customthm}{\ref{index estimate free boundary-introduction}}
%\label{index estimate free boundary}
Let $M$ be a $3$-dimensional oriented Riemannian manifold with boundary isometrically embedded in $\real^d$ and let $\Sigma$ be a compact immersed capillary surface in $M$ at a constant angle $\theta$ with genus $g$ and $r$ boundary components.
Suppose that every non-zero $\xi\in \harmonicvfb(\Sigma,\partial \Sigma)$ satisfies
\begin{equation*}
\begin{aligned}
\int_{\Sigma} & \sum_{i=1}^2|\sffM_M(e_i,\xi)|^2+|\sffM_M(e_i,\star\xi)|^2
-\int_\Sigma R_{M}|\xi|^2\\ &-\int_{\partial\Sigma} \frac{2}{\sin\theta}H_{\partial M}|\xi|^2<\int_\Sigma H_\Sigma^2|\xi|^2+\int_{\partial\Sigma}2\cot\theta H_\Sigma|\xi|^2.
\end{aligned}
\end{equation*}
Then
\[\Index_w(\Sigma)\geq\frac{2g+r-1-d}{2d}.\]
\end{customthm}

\vspace{0.5
cm
}

We now give a proof to Corollary \ref{particular case in R3} in the Introduction. Let $M$ be a domain of $\R^3$ with smooth boundary and assume that $H_{\partial M}+H_\Sigma \cos\theta\geq 0$ along $\partial\Sigma$. If $H_\Sigma=0$ and $H_{\partial M}>0$ at some point of $\partial\Sigma$, then the assumption in Theorem \ref{index estimate free boundary-introduction} is satisfied because $\sffM_M=R_M=0$ and by Lemma \ref{lemma: max principle harmonic vector field}, a nonzero harmonic vector field $\xi$ cannot vanish in a segment of $\partial\Sigma$.  Otherwise, if $H_\Sigma>0$, the assumption in Theorem \ref{index estimate free boundary-introduction} automatically holds.

\section{The structure of noncompact capillary surfaces with finite index}\label{sec: structure of noncompact capillary surfaces with finite index}

In this section we turn to study noncompact capillary surfaces with finite index. We begin it by studying noncompact capillary surfaces in general 3-manifolds with smooth boundary.

\subsection{Noncompact capillary surfaces in general 3-manifolds}
Let $M$ be a three dimensional Riemannian manifold with smooth boundary and let $\Sigma$ be a noncompact capillary 
surface with finite index immersed in $M$. Let $\Omega_1 \subset \Omega_2 \subset \cdots \subset
\Omega_n \subset \cdots $ be an exhaustion of $\Sigma$ by open and bounded sets. Since $\Sigma$ has finite index there must be a $n \in \N$ such that 
$\operatorname{Index}_s(\Omega_m) = \operatorname{Index}_s(\Omega_n)$ for all $m \ge n$. It follows
that $\Sigma \setminus \overline \Omega_n$ is strongly stable. Conversely, it is easy to see that
$\Sigma$ has finite index if it is strongly stable outside some compact set. This fact will be important to
understand noncompact capillary surfaces with finite index.

A Jacobi function in an edged capillary surface $\Sigma$ immersed in $M$ is a function $u \in C^2(\Sigma)$ such that
\[
\begin{cases}
Ju = 0 &\text{ in $\Sigma$}\\
\frac{\partial u}{\partial \eta} - qu = 0 &\text{ on $\Gamma=\partial\Sigma\cap\partial M$}.
\end{cases}
\]
 Jacobi function plays an important role in the study of strongly
stable and finite index capillary surfaces. We will first show that the existence of a positive Jacobi
function characterizes strong stability in the same way as in the theory of minimal surfaces \cite{Fischer-Colbrie-Schoen-The-structure-of-complete-stable}*{Theorem 1}. In the following result we 
denote by $\lambda_1(\Omega, Q)$ the first eigenvalue of the stability operator $Q$ in $\Omega \subset 
\Sigma$.

\begin{proposition}
\label{prop: positive jacobi field}
Let $\Sigma$ be a noncompact capillary surface in a 3-manifold $M$ and let $C \subset \Sigma$ be a compact subset. The followings are equivalent:
\begin{enumerate}
	\item $\lambda_1(\Omega, Q) \ge 0$ for all bounded open sets $\Omega \subset \Sigma \setminus C$.
	\item $\lambda_1(\Omega, Q) > 0$ for all $\Omega$ as above.
	\item There exists a positive Jacobi function $u$ of $\Sigma \setminus C$.
\end{enumerate}
\end{proposition}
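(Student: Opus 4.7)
The plan is to adapt the Fischer-Colbrie--Schoen characterization of stability via positive Jacobi functions to the capillary mixed Dirichlet--Robin setting, in which the stability operator $Q$ carries Dirichlet data on the interior portion $\partial\Omega\setminus\partial M$ and the Robin data $\partial u/\partial\eta = qu$ on the capillary portion $\partial\Omega\cap\partial M$. The implication (2)$\Rightarrow$(1) is immediate; the remaining work is to prove (3)$\Rightarrow$(2), (1)$\Rightarrow$(2), and (2)$\Rightarrow$(3). For (3)$\Rightarrow$(2) I will use the classical substitution trick: given a positive Jacobi function $u$ and a bounded $\Omega\subset\Sigma\setminus C$, write any admissible $\varphi$ as $\varphi=uf$. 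Rewriting the cross term and integrating by parts, the boundary contribution along $\partial\Omega\cap\partial M$ evaluates via the Robin condition to $\int qu^2f^2 = \int q\varphi^2$, which exactly cancels the Robin term in $Q$, while the equation $Ju=0$ absorbs the $(|A_\Sigma|^2+\operatorname{Ric}_M(\nu,\nu))$ term. The resulting identity is $Q(\varphi,\varphi)=\int_\Omega u^2|\nabla f|^2\ge 0$. To upgrade to strict positivity, if $\lambda_1(\Omega,Q)=0$ the first eigenfunction $\varphi_1$ would force $f=\varphi_1/u$ to be constant, contradicting $\varphi_1\equiv 0$ on $\partial\Omega\setminus\partial M$ (which is nonempty since $\Omega$ is a proper bounded subset of the noncompact connected $\Sigma$).

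For (1)$\Rightarrow$(2) I will establish strict monotonicity of the first mixed eigenvalue. Suppose $\lambda_1(\Omega_0,Q)=0$ for some bounded $\Omega_0\subset\Sigma\setminus C$. Choose a bounded $\Omega_1\subset\Sigma\setminus C$ with $\overline{\Omega_0}\subsetneq\Omega_1$. The extension by zero of the first eigenfunction of $\Omega_0$ is an admissible $H^1$ test function on $\Omega_1$ with the correct vanishing on $\partial\Omega_1\setminus\partial M$, and it realizes $\lambda_1(\Omega_1,Q)\le 0$. Combined with (1) this yields equality, forcing the extension to be (up to scaling) the first eigenfunction on $\Omega_1$ and hence smooth; but the extension vanishes on the open set $\Omega_1\setminus\overline{\Omega_0}$, which contradicts the unique continuation principle for the elliptic equation $Ju+\lambda u=0$.

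For (2)$\Rightarrow$(3), I will fix an exhaustion $\Omega_1\subsetneq\Omega_2\subsetneq\cdots$ of $\Sigma\setminus C$ with $\overline{\Omega_n}\subset\Omega_{n+1}$ and a basepoint $p_0\in\Omega_1$. Since $\lambda_1(\Omega_n,Q)>0$, Fredholm theory produces a unique solution $u_n$ of the inhomogeneous mixed problem $Ju_n=0$ in $\Omega_n$, $u_n=1$ on $\partial\Omega_n\setminus\partial M$, and $\partial u_n/\partial\eta=qu_n$ on $\partial\Omega_n\cap\partial M$. Positivity of $u_n$ is enforced by a nodal-domain argument: if $\Omega_n^{-}=\{u_n<0\}$ were nonempty, then $-u_n|_{\Omega_n^{-}}$ would be a nontrivial nonnegative eigenfunction of $Q$ on $\Omega_n^{-}$ with eigenvalue $0$, forcing $\lambda_1(\Omega_n^{-},Q)=0$ and contradicting the monotonicity bound $\lambda_1(\Omega_n^{-},Q)\ge\lambda_1(\Omega_n,Q)>0$. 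Rescaling so that $u_n(p_0)=1$ and applying Harnack's inequality together with interior and boundary Schauder estimates, I extract uniform $C^{2,\alpha}_{\mathrm{loc}}$ bounds, and a diagonal subsequence converges to a positive $u$ on $\Sigma\setminus C$ that satisfies $Ju=0$ together with the Robin condition.

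The main technical obstacle is handling the Robin boundary consistently in all three steps: the substitution identity must consume the Robin term in $Q$ exactly via the equation $\partial u/\partial\eta = qu$; the monotonicity argument requires unique continuation to remain applicable up to the mixed boundary; and the exhaustion step relies on Harnack's inequality and Schauder regularity holding uniformly up to the Robin portion of $\partial\Omega_n$. These are standard facts for oblique-type boundary conditions, but the dependence of $q$ on $H_{\partial M}$ and the contact angle $\theta$ should be tracked carefully to ensure the constants in the local estimates can be chosen independently of $n$.
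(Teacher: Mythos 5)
Your proposal follows the same Fischer--Colbrie--Schoen strategy as the paper. The implications $(1)\Rightarrow(2)$ and $(3)\Rightarrow(1)$ (you do $(3)\Rightarrow(2)$ instead, which is equivalent) are only cited by the paper as standard; you spell them out, and your arguments — the substitution $\varphi=uf$ with the Robin term absorbed by $\partial u/\partial\eta=qu$, the zero-extension plus unique-continuation for strict monotonicity — are the correct standard ones. Your $(2)\Rightarrow(3)$ is exactly the paper's construction in disguise: you solve $Ju_n=0$ with $u_n=1$ on the Dirichlet part of $\partial\Omega_n$ and the Robin condition on $\Gamma\cap\Omega_n$, while the paper solves the inhomogeneous problem $Jv=-(\operatorname{Ric}_M(\nu,\nu)+|A_\Sigma|^2)$, $\partial v/\partial\eta-qv=q$, $v=0$ on the Dirichlet part, and sets $u=v+1$; the substitution $v=u-1$ identifies the two. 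Your nodal-domain argument for positivity and your use of Harnack plus interior and boundary Schauder estimates to pass to the limit are likewise what the paper invokes (citing the Harnack inequality of \cite{Harnack-mixed-conditions} and the Schauder estimates of \cite{Nirenberg-Estimates-near-boundary}). The one point worth making explicit if you write this up: when extending the first eigenfunction of $\Omega_0$ by zero into $\Omega_1\supsetneq\overline{\Omega_0}$, the trace on $\partial\Omega_0\cap\Omega_1$ is supported on the corner set $\partial\Omega_0\cap\partial\Sigma$, which has zero length for domains with piecewise smooth boundary meeting $\partial\Sigma$ transversally, so the extension does lie in $H^1_0(\Omega_1)$; restricting the variational characterization to such nice domains (which suffices) makes the monotonicity step clean.
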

\begin{proof}
The arguments showing that $(1) \Rightarrow (2)$ and $(3) \Rightarrow (1)$ are
basically the same as the ones used by Fisher-Colbrie and Schoen 
\cite{Fischer-Colbrie-Schoen-The-structure-of-complete-stable} (see also
\cite{Colding-Minicozzi-A-course-in-minimal-surf}), we omit these calculations here and only show that $(2) \Rightarrow (3)$.

Let $\Gamma = \partial \Sigma \cap \partial M$ and define $H_{0}^1(\Omega)$ to be the closure
of $C_{0}^\infty(\Omega)$ with respect to $H^1(\Omega)$ norm. Note that a weak solution to
\[\begin{cases}
Jv = f_1 &\text{ in $\Omega$}\\
\frac{\partial v}{\partial \eta} - qv = f_2 &\text{ on $\Gamma \cap \Omega$}\\
v = 0 &\text{ on $\partial \Omega \setminus \Gamma$}.
\end{cases}\]
can be characterized as a function $v \in H_{0}^1(\Omega)$ such that for all $\varphi
\in C_{0}^\infty(\Omega)$ we have:
\[Q(v, \varphi) = \int_\Omega f_1\varphi + \int_{\Gamma \cap \Omega} f_2\varphi.\]
The existence of weak solutions for any $f_1 \in L^2(\Omega)$, $f_2 \in
L^2(\Gamma \cap \Omega)$ can be established through the Fredholm alternative using $(2)$ and
standard arguments (see \cite{Evans-PDEs}*{Section 6.2.3}). In particular, there is
a weak solution $v$ to
\[\begin{cases}
Jv = -\text{Ric}_M(\nu,\nu) -|A_\Sigma|^2 &\text{ in $\Omega$}\\
\frac{\partial v}{\partial \eta} - qv = q &\text{ on $\Gamma \cap \Omega$}\\
v = 0 &\text{ on $\partial \Omega \setminus \Gamma$}.
\end{cases}\]
So $u = v + 1$ is a Jacobi function in $\Omega$. Now, using the Harnack inequality
in \cite{Harnack-mixed-conditions} and standard arguments (see \cites{Fischer-Colbrie-Schoen-The-structure-of-complete-stable, Colding-Minicozzi-A-course-in-minimal-surf,
Fischer-Colbrie-On-complete-minimal}) one can show that $u$ is positive.
Then, using the Schauder estimates of Agmon, Douglis
and Nirenberg \cite{Nirenberg-Estimates-near-boundary}*{Theorem 7.3} one can
construct a positive Jacobi function in $\Sigma \setminus C$ by using the same limit
construction as the one used by Fischer-Colbrie and Schoen
\cites{Fischer-Colbrie-Schoen-The-structure-of-complete-stable,
Fischer-Colbrie-On-complete-minimal}.
\end{proof}

Now we are ready to describe the structure of noncompact capillary surfaces under certain curvature
assumptions on $\Sigma$ and $M$. We will show that under these assumptions, noncompact capillary
surfaces also share a key property of minimal surfaces in ambient spaces with nonnegative scalar curvature: They are conformal to compact Riemann surfaces with a finite number of punctures (see \cite{Fischer-Colbrie-On-complete-minimal}*{Theorem 1}).

\begin{customthm}{\ref{thm: Finite topology intro}}
\label{thm: Finite topology}
Let $M$ be an oriented 3-manifold with smooth boundary and let $\Sigma$ be a 
noncompact capillary surface with finite index immersed in
$M$ at a constant angle $\theta$. Assume that 
$R_M + H^2_{\Sigma} \ge 0$ and that one of the following holds:
\[\partial \Sigma \text{ is compact,}\]
or 
\[H_{\partial M} + H_\Sigma\cos \theta  \ge 0 \ \text{along $\partial \Sigma$}.\]
Then $\Sigma$ is conformally equivalent to a compact
Riemann surface with boundary and finitely many points removed, each associated to an end of the surface. Moreover, 
\begin{equation}
\label{eq: finite integral of p}
\int_\Sigma R_M + H_\Sigma^2 + |A_\Sigma|^2 + \int_{\partial \Sigma} H_{\partial M} + H_\Sigma\cos \theta < \infty.
\end{equation}
\end{customthm}
\begin{proof}

Since $\Sigma$ has finite index, there exists a compact set $C \subset  \Sigma$ such that $\Sigma \setminus C$ is strongly stable. For the rest of this proof we will assume that $H_{\partial M} + H_\Sigma\cos \theta  \ge 0$ along $\partial\Sigma$. It is easy to see that the proof in case where $\partial \Sigma$ is compact follows from the same arguments used below by taking $C$ large enough so that $\partial \Sigma \subset C$.

From equation \eqref{eq: q geodesic curvature} we obtain that for all  $\varphi \in C^\infty_0(\Sigma \setminus C)$
\[ \int_\Sigma |\nabla \varphi|^2 - (\text{Ric}_M(\nu, \nu) + |A_\Sigma|^2)\varphi^2 + \int_{\partial \Sigma} \varphi^2\kappa_{\partial \Sigma}  \ge Q(\varphi, \varphi) \ge 0.\]
Applying the same techniques used to prove Proposition \ref{prop: positive jacobi field} we conclude
that there is a positive function $v$ defined on $\Sigma \setminus C$ such that
\[
\begin{cases}
Jv = 0 &\text{ in $\Sigma \setminus C$}\\
\frac{\partial v}{\partial \eta} + v \kappa_{\partial \Sigma}  = 0 &\text{ on $\partial \Sigma \setminus C$}.
\end{cases}
\]
Extend $v$ to all of $\Sigma$ so that it is positive and $\frac{\partial v}{\partial \eta} + 
v\kappa_{\partial \Sigma} = 0$ along all of $\partial \Sigma$. Let $d\tilde s^2 = v^2 ds^2$ where $ds^2$ is the metric of $\Sigma$.
Notice that the metric $d\tilde s$ has Gaussian curvature
\begin{equation}\label{eq:gauss curvature change under conformal change}
    \tilde K_\Sigma = v^{-2}(K_\Sigma - \Delta \log v),
\end{equation}
where $K_\Sigma$ is the Gaussian curvature of $ds^2$. From the Gauss' equation
\[ \Delta v + (|A_\Sigma|^2 + \text{Ric}_M(\nu,\nu))v = 
\Delta v - K_\Sigma v + \frac{1}{2}(R_M + H_\Sigma^2+|A_\Sigma|^2)v = 0,\]
then
\begin{equation}\label{eq: K_Sigma - Delta log u}
K_\Sigma -\Delta \log v = K_\Sigma -\frac{v \Delta v - |\nabla v|^2}{v^2} =\frac{1}{2}(R_M  + H_\Sigma^2+|A_\Sigma|^2)+\frac{|\nabla v|^2}{v^2}\geq 0.
\end{equation}
Hence $\tilde K_\Sigma \ge 0$ on $\Sigma \setminus C$. The geodesic curvature of $\partial \Sigma$ in the 
metric $d\tilde s^2$ is
\begin{equation}\label{eq: geodesic curvature conformal change}
\tilde \kappa_{\partial \Sigma} = v^{-2}\frac{\partial v}{\partial\eta} + v^{-1}\kappa_{\partial \Sigma},
\end{equation}
so $\tilde \kappa_{\partial \Sigma} = 0$.

Now we will show that $(\Sigma, d\tilde s^2 )$ is complete in the sense of metric spaces, that is, every
geodesic of $(\Sigma, d\tilde s^2)$ either exits for infinite time or hits the boundary $\partial \Sigma$. Following
\cite{Fischer-Colbrie-On-complete-minimal} we can construct a divergent geodesic $\gamma$ of
$(\Sigma, d\tilde s^2)$ starting at a point $p \in \partial \Sigma$ that minimizes distance to the boundary of any
disk of $(\Sigma, d\tilde s^2)$ around $p$. To be more precise, $\gamma$ is constructed as the limit of geodesics of
a family of metrics that interpolates between $ds^2$ and $d\tilde s^2$, each of which equals $d\tilde s^2$ in a neighborhood
of $p$ and equals $g$ far away from $p$. It is enough to show that such $\gamma$ has infinite length, 
which can be done by the same arguments used by Fischer-Colbrie \cite{Fischer-Colbrie-On-complete-minimal}.
We conclude that $(\Sigma, d\tilde s^2)$ is complete.

Since $\Sigma$ has smooth boundary we can double it to obtain a smooth surface without boundary $\check \Sigma$. Because $(\Sigma, d\tilde s^2)$ has totally geodesic boundary, the double accepts a metric $d\check{\tilde s}^2$ that restricts to $d\tilde s^2$ in both copies of $\Sigma$ contained in $\check \Sigma$. Note that $d\check{\tilde s}^2$ might not be smooth, but it at least $C^2$ (e.g. Lemma 2.4 in \cite{SYZhang}). We will
argue that $(\check \Sigma, d\check{\tilde s}^2)$ is geodesiclly complete. Let $\Phi : \check \Sigma \to \check \Sigma$ be the
involution coming from the doubling process, and let $\gamma$ be a minimizing geodesic in $(\check \Sigma, 
d\check{\tilde s}^2)$ that is not contained in $\partial \Sigma$. Note that the interior of $\gamma$ can intersect $\partial \Sigma$ at most once, since otherwise we can construct a minimizing geodesic that is not $C^1$ by using
$\Phi$ to reflect the part of $\gamma$ between the first two intersections of $\gamma$ with $\partial \Sigma$ and joining this reflected curve to the rest of $\gamma$.
Now we can use the fact that $(\Sigma, d\tilde s^2)$ is complete to show that if $\gamma$ is divergent it must have infinite
length. 

Since $(\check \Sigma, d\check{\tilde s}^2)$ is complete with nonnegative Gaussian curvature away from a compact set,
it follows from the Huber Theorem that $\check \Sigma$, and hence also $\Sigma$, is finitely connected.
The same arguments used by Fischer-Colbrie \cite{Fischer-Colbrie-On-complete-minimal}*{p. 128}
show that the ends of $(\check \Sigma, d\check{\tilde s}^2)$ are conformal to puncture disks.

Let $d\hat s^2 = u^2ds^2$ where $u > 0$ is such that $u|_{\Sigma \setminus C}$ is a Jacobi function. The same
arguments used to show that $(\Sigma, d\tilde s^2)$ is complete also show that $(\Sigma, d\hat s^2)$ is complete.
Furthermore, \eqref{eq:gauss curvature change under conformal change}--\eqref{eq: geodesic curvature conformal change} apply to this conformal change and imply that in $\Sigma \setminus C$
\[ \hat K_\Sigma \ge \frac{1}{2u^2}(R_M  + H_\Sigma^2+|A_\Sigma|^2),\]
where $\hat K_\Sigma$ is the Gaussian curvature associated to $d\hat s^2$, and in $\partial \Sigma \setminus C$
\[\hat \kappa_{\partial \Sigma} = \frac{1}{u \sin \theta}(H_{\partial M} + H_\Sigma\cos \theta),\]
where $\hat \kappa_{\partial \Sigma}$ is the geodesic curvature of $\partial \Sigma$ associated to $d\hat s^2$.
We then conclude that
\begin{multline} \label{eq: curvature hat g}
 \int_{\Sigma\setminus C}R_M+H_\Sigma^2+|A_\Sigma|^2 dA + 
\int_{\partial \Sigma \setminus C} H_{\partial M} + H_\Sigma\cos \theta d \ell\\
\qquad
\le 2\int_{\Sigma \setminus C} \hat K_\Sigma d\hat A + \sin \theta \int_{\partial \Sigma \setminus C} 
\hat  \kappa_{\partial \Sigma} d \hat \ell,
\end{multline}
where $d A$, $d \hat A$ are the area elements associated to $ds^2$ and $d\hat s^2$, respectively, and $d \ell$, 
$d \hat \ell$ are the length elements associated to $ds^2$ and $d\hat s^2$, respectively.

Since $\hat K_\Sigma$ and $\hat \kappa_{\partial \Sigma}$ are nonnegative outside $C$, we have that
$(\Sigma, d\hat s^2)$ accepts a total curvature and a total geodesic curvature (as defined in \cite{Shiohama-The-geometry-of-total-curvature}). Thus we obtain from the Cohn-Vossen Theorem with boundary
(\cite{Shiohama-The-geometry-of-total-curvature}*{Theorem 2.2.1}) that
\[ \int_{\Sigma \setminus C} \hat K_\Sigma d\hat A + \int_{\partial \Sigma \setminus C} 
\hat  \kappa_{\partial \Sigma} d \hat \ell  < \infty.\]
Combining \eqref{eq: curvature hat g} with the inequality above completes the proof of the theorem.
\end{proof}

\begin{remark}\label{rmk: description of conformal structure}
One can think of a capillary surface $\Sigma$ as in the statement of Theorem \ref{thm: Finite topology} as a compact Riemann surface $\overline \Sigma$ with boundary and with points $p_1, \ldots, p_k$ removed. We can order these points so that $p_1, \ldots, p_\ell$ are in the interior of $\overline \Sigma$ (there might be no interior punctures, in which case we take $\ell =0$) and $p_{\ell + 1}, \ldots, p_{k}$ are in the boundary of $\overline \Sigma$. Then each $p_i$ for $i \in \{ 1, \ldots, \ell\}$ corresponds to an end of $\Sigma$ that is away from its boundary, we will refer to these ends as \textbf{interior ends}. For $i \in \{\ell + 1, \ldots, k\}$, each $p_i$ corresponds to an end of $\Sigma$ that contains a noncompact part of $\partial \Sigma$, we refer to these ends as \textbf{boundary ends}.
\end{remark}
\begin{remark}\label{rmk: description of ends}
It is clear that each interior end of $\Sigma$ is conformal to a punctured disk $D_* = \{z \in \C: 0 < |z| < 1 \}$. The boundary ends of $\Sigma$ are conformal to a semi-open punctured half-disk $D_*^+ = \{ z \in D_* : \textup{Re}(z) \ge 0\}$. To see this fact note that a neighborhood of a point $p_i$ for $i \in \{ \ell + 1, \ldots, k\}$ in the double $\check \Sigma$ is conformal to a punctured disk. We can assume that this neighborhood is symmetric with respect to the isometry $\Phi$, and hence $\Phi$ induces a conformal, orientation-reversing diffeomorphism on the punctured disk. The composition of this transformation with complex conjugation must be an injective holomorphic map from the punctured disk to itself, and hence must be a rotation. So we can assume that the part of this punctured disk corresponding to $\Sigma$ is $D_*^{+}$.
\end{remark}

In what follows we show two consequences of Theorem \ref{thm: Finite topology}. The first one is:
\begin{corollary}\label{corollary: capillary must be minimal}\label{corollary: surface must be minimal}
Let $M$ be a weakly mean-convex domain of $\mathbb{R}^3$ and let $\Sigma$ be a noncompact capillary surface with finite index immersed in $M$ at a constant angle $\theta\in(0,\pi/2)$.
Then $\Sigma$ must be a minimal surface. If $M$ is a half-space, the assumption on the angle can be weakened to $\theta\in (0,\pi/2].$

\end{corollary}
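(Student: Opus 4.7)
The plan is to apply Theorem \ref{thm: Finite topology intro} to obtain the curvature integral estimate, and then derive a contradiction from the assumption $H_\Sigma > 0$ using a monotonicity argument.

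\emph{Setup.} Since $M \subset \R^3$, $R_M = 0$, so $R_M + H_\Sigma^2 \ge 0$ trivially. For the boundary condition, $H_{\partial M} \ge 0$ by weak mean-convexity, $H_\Sigma \ge 0$ by the orientation convention on $\nu$, and $\cos\theta \ge 0$ since $\theta \in (0, \pi/2]$, so $H_{\partial M} + H_\Sigma \cos\theta \ge 0$ along $\partial \Sigma$. Theorem \ref{thm: Finite topology intro} then gives
\[ \int_\Sigma (H_\Sigma^2 + |A_\Sigma|^2)\, dA + \int_{\partial\Sigma}(H_{\partial M} + H_\Sigma \cos\theta)\, d\ell < \infty. \]
Suppose for contradiction that $H_\Sigma > 0$. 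Since $H_\Sigma$ is a positive constant, the finiteness of $\int_\Sigma H_\Sigma^2$ forces $|\Sigma| < \infty$.

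\emph{Reduction to a boundary-free situation.} When $\theta \in (0, \pi/2)$ we have $\cos\theta > 0$, so finiteness of $\int_{\partial\Sigma} H_\Sigma \cos\theta\, d\ell$ forces the total length of $\partial\Sigma$ to be finite. Since $\partial\Sigma$ is a smooth $1$-manifold whose open-arc components would carry infinite length, every component of $\partial\Sigma$ must be a circle, and $\partial\Sigma$ is compact. For the half-space case with $\theta = \pi/2$, I instead reflect $\Sigma$ across $\partial M$; the orthogonality $\nu \perp N$ along $\partial \Sigma$ ensures that the doubled surface $\tilde\Sigma$ is a smooth complete noncompact CMC immersion in $\R^3$ without boundary, still with $H > 0$ and with area $2|\Sigma| < \infty$. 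In either case, the problem reduces to a complete noncompact CMC immersion in $\R^3$ with $H > 0$, finite total area, and compact (possibly empty) boundary.

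\emph{Monotonicity.} I invoke the monotonicity formula for immersed CMC surfaces in $\R^3$: there is an absolute constant $c > 0$ such that for every interior point $p \in \Sigma$ and every $r \le \min\{d_\Sigma(p, \partial\Sigma),\, (2H_\Sigma)^{-1}\}$, the area of the intrinsic geodesic ball satisfies
\[ |B^\Sigma_r(p)| \ge c\, r^2. \]
Since $\Sigma$ is noncompact while $\partial \Sigma$ is compact, I can extract a sequence $\{p_n\} \subset \Sigma$ with $d_\Sigma(p_n, p_m) \to \infty$ and $d_\Sigma(p_n, \partial \Sigma) \to \infty$. Fixing $r_0 > 0$ sufficiently small, the balls $B^\Sigma_{r_0}(p_n)$ are pairwise disjoint for large $n$, and summing the monotonicity lower bound gives $|\Sigma| \ge \sum_n c\, r_0^2 = \infty$, contradicting $|\Sigma| < \infty$. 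Hence $H_\Sigma = 0$, proving that $\Sigma$ is minimal.

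\emph{Main obstacle.} The technical core is the monotonicity step: one must invoke an appropriate CMC monotonicity formula (in the spirit of Simon's monotonicity for varifolds with bounded mean curvature), handle the immersion multiplicity correctly, and verify that $r_0$ can be chosen uniformly along the divergent sequence. The first two points are standard; the uniformity requires keeping the $p_n$ at uniformly positive intrinsic distance from $\partial \Sigma$, which is precisely why the compactness of $\partial \Sigma$ (or the reflection trick in the free-boundary case) was established in the previous step.
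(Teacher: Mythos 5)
Your proof is correct and reaches the paper's conclusion, but by a somewhat different organization. For $\theta\in(0,\pi/2)$, the paper splits into cases according to whether $\Sigma$ has an interior end: with an interior end, Frensel's area lower bound along a geodesic ray gives $|\Sigma|=\infty$; without one, $\partial\Sigma$ has infinite length, so $\int_{\partial\Sigma}H_\Sigma\cos\theta<\infty$ directly forces $H_\Sigma=0$. You instead assume $H_\Sigma>0$, deduce that $\partial\Sigma$ has finite length and hence (since Theorem \ref{thm: Finite topology intro} gives finitely many components) is a finite union of circles, so $\Sigma$ must have an interior end, and then the Frensel-type bound gives the contradiction -- the same ideas run in reverse order, and you correctly note that compactness of $\partial\Sigma$ is what lets you keep the geodesic balls uniformly away from the boundary. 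For the free-boundary half-space case, the paper proves the boundary-point area lower bound directly by integrating $\Delta_\Sigma|x|^2=4+2H_\Sigma\langle\nu,x\rangle$ over geodesic half-disks centered on $\partial\Sigma$ and using that $\langle\eta,x\rangle=0$ along the plane, whereas you invoke Schwarz reflection to pass to a boundaryless complete noncompact CMC immersion in $\R^3$ and then apply the classical boundary-free result. The reflection route is more modular and slicker, but it imports the regularity of the reflected CMC graph across the fixed plane (smoothness of a Neumann-free-boundary CMC graph under doubling), a standard fact that nevertheless deserves a citation; the paper's direct integral computation is more elementary and self-contained. One small point of terminology: what you state and use is the intrinsic Frensel-type lower bound on geodesic disks, not the extrinsic monotonicity formula, so citing Frensel (as the paper does) rather than ``monotonicity'' would be more accurate, since disjointness of intrinsic balls for intrinsically-separated $p_n$ is what the argument actually needs.
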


\begin{proof}
It follows from Theorem \ref{thm: Finite topology} that $\Sigma$ is conformally equivalent to a compact Riemann surface $\bar{\Sigma}$ punctured at finitely many points, and
\begin{equation}\label{eq: integral of mean curvature is finite}
\int_\Sigma H_\Sigma^2+\int_{\partial\Sigma} H_{\Sigma}\cos\theta<\infty.
\end{equation}
Suppose $\Sigma$ has an interior end. We choose a geodesic ray $\gamma:[0,\infty)\rightarrow \Sigma$ contained in an interior end.
%starting at point $p=\gamma(0)\in\Sigma$ as the surface is not compact. 
Fix small $\epsilon_0>0$ and consider a sequence of points $p_j=\gamma(2j\epsilon_0)$ such that the geodesic disks $D_{\epsilon_0}(p_j)$ of radius $\epsilon_0$ around $p_j$  satisfy $D_{\epsilon_0}(p_j)\cap D_{\epsilon_0}(p_k)=\emptyset$ whenever $j\neq k$. From the uniform lower bound on the area of geodesic disks in \cite{Frensel-CMCsurfaces-infinite-volume}*{Theorem 3} it follows that
\[A(\Sigma)\geq \sum_{j=0}^n A(D_{\epsilon_0}(p_j))\geq (n+1)C(\epsilon_0), \]
where $A$ is the area and $C(\epsilon_0)$ is a positive constant depending only on $\epsilon_0$. Since above inequality holds for any $n \in \N$, we obtain that $\Sigma$ has infinite area, thus it follows from (\ref{eq: integral of mean curvature is finite}) that $H_\Sigma=0$. 

If $\Sigma$ has no interior end, it has at least one boundary end since it is noncompact. Then it is easy to see that $\partial\Sigma$ has infinite length. Thus, if $\theta\in (0,\pi/2)$, we must have $H_\Sigma=0.$ This completes the first part of the Theorem. 

If $\theta\in (0,\pi/2]$ and $M$ is a half-space, we only need to deal with the special case when $\theta=\pi/2$ and $\Sigma$ has no interior ends. Denote a noncompact boundary component of $\Sigma$ by $E$, choose a sequence of disjoint points $p_j\in E$ and a small $\epsilon_0>0$ such that geodesic disks $D_{\epsilon_0}(p_j)$ are disjoint. By using a similar idea to \cite{Frensel-CMCsurfaces-infinite-volume}, we claim that $A(D_{\epsilon_0}(p_j))\geq C(\epsilon_0)$ where $C(\epsilon_0)$ is a positive constant depending only on $\epsilon_0$. Indeed, without lost of generality, we assume that $p_j$ is the origin and let $D_\mu$ be the geodesic disk centered at origin with radius $\mu$. For small $\mu$, denote $\Gamma_\mu=\partial D_\mu\cap P$ and $I_\mu=\partial D_\mu\setminus \Gamma_\mu$ where $P$ is the plane, i.e., boundary of the half-space.  It is well known that $\Delta_\Sigma |x|^2=4+2H_\Sigma\ll \nu,x\rl$. We integrate both sides of this equation on the disk $D_\mu$ to obtain
\[(4-2H_\Sigma\mu)A(D_\mu)\leq 4|D_\mu|-2H_\Sigma\int_{D_\mu}|x|\leq \int_{D_\mu}\Delta_\Sigma |x|^2.\]
By the divergence theorem, 
\[\int_{D_\mu}\Delta_\Sigma |x|^2= \int_{\partial D_\mu}2\ll \eta,x\rl\leq 2\mu l(I_\mu),\]
where $l$ denotes the length. In addition, from the co-area formula it follows that
\[\frac{\partial A(D_\mu)}{\partial \mu}\geq l(I_\mu).\]
Combining above equations results in 
\[\left(\frac{A(D_\mu)}{\mu^2}\right)'\geq -H_\Sigma\frac{A(D_\mu)}{\mu^2},\]
equivalently,
\[\left(\text{log} \frac{A(D_\mu)}{\mu^2}\right)'\geq -H_\Sigma.\]
We then show the claim by integrating both sides over $(a,\epsilon_0)$ and letting $a$ tend to zero. Hence the surface has infinite area and thus (\ref{eq: integral of mean curvature is finite}) implies again that $H_\Sigma=0.$
\end{proof}

The second corollary is stated as follows.
\begin{corollary}\label{cor: discription strongly stable}
Let $M$ be an oriented 3-manifold with smooth boundary and let $\Sigma$ be a strongly stable noncompact capillary
surface immersed in $M$ at a constant angle $\theta$. Assume that $R_M + H^2_\Sigma \ge 0$ in $\Sigma$ and 
$H_{\partial M} + H_\Sigma \cos \theta \ge 0$ along $\partial \Sigma$. Then the compact Riemann surface
$\overline \Sigma$ is a disk and the ends of $\Sigma$ can only have one of the following configurations:
\begin{enumerate}
    \item There are two boundary ends and no interior ends.
    \item There are no boundary ends and a single interior end.
    \item There is a single boundary end and no interior ends.
\end{enumerate}
Moreover, if (1) or (2) holds, then $\Sigma$ is totally geodesic, $R_M = 0$ in $\Sigma$ and $H_{\partial M} = 0$
along $\partial \Sigma$.
\end{corollary}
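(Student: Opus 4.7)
The plan is to apply Proposition \ref{prop: positive jacobi field} to obtain a positive Jacobi function on all of $\Sigma$, and then to use the conformal-change-and-Cohn-Vossen machinery from the proof of Theorem \ref{thm: Finite topology} globally, combined with a doubling argument. Since $\Sigma$ is strongly stable, Proposition \ref{prop: positive jacobi field} applied with $C=\varnothing$ yields a positive Jacobi function $u$ on $\Sigma$ with $Ju=0$ and $\partial u/\partial \eta = qu$ on $\partial \Sigma$. Setting $d\hat s^2 = u^2 ds^2$, the computations in \eqref{eq:gauss curvature change under conformal change}--\eqref{eq: geodesic curvature conformal change} (and the completeness argument) from the proof of Theorem \ref{thm: Finite topology} apply globally and yield
\[ \hat K_\Sigma = \frac{1}{u^2}\left[\tfrac{1}{2}(R_M + H_\Sigma^2 + |A_\Sigma|^2) + \frac{|\nabla u|^2}{u^2}\right] \ge 0 \ \text{on } \Sigma, \quad \hat \kappa_{\partial \Sigma} = \frac{H_{\partial M} + H_\Sigma \cos\theta}{u\sin\theta} \ge 0 \ \text{on } \partial \Sigma, \]
together with completeness of $(\Sigma, d\hat s^2)$, using the two curvature assumptions.

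Next, double $\Sigma$ along $\partial \Sigma$ to obtain a closed-with-punctures surface $\check \Sigma$. Letting $g$, $b$, $\ell$, and $k$ denote the genus of $\overline \Sigma$, its number of boundary components, the number of interior punctures of $\Sigma$, and the total number of punctures of $\Sigma$ respectively, a standard topological count gives $\chi(\check \Sigma) = 4 - 4g - 2b - \ell - k$. The doubled metric is only Lipschitz along $\partial \Sigma$ in general, but carries a nonnegative distributional Gaussian curvature equal to $\hat K_\Sigma$ on each copy of $\Sigma$ plus $2\hat \kappa_{\partial \Sigma}$ times arclength measure on the doubled boundary. Applying the Cohn-Vossen inequality to this distributional metric --- either via mollification, or equivalently by first performing the auxiliary conformal change that makes $\partial \Sigma$ totally geodesic as in the proof of Theorem \ref{thm: Finite topology} and then comparing with $d\hat s^2$, or by directly invoking Shiohama's boundary version (Theorem 2.2.1 in \cite{Shiohama-The-geometry-of-total-curvature}) --- yields
\[ 2\int_\Sigma \hat K_\Sigma + 2\int_{\partial \Sigma} \hat \kappa_{\partial \Sigma} \le 2\pi \chi(\check \Sigma).\]
Noncompactness of $\Sigma$ forces $k\ge 1$ and having nonempty boundary forces $b \ge 1$, so combined with the nonnegativity of the left-hand side this gives $g = 0$, $b = 1$, and $(k, \ell) \in \{(1,0), (1,1), (2,0)\}$. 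These three cases are precisely configurations (3), (2), (1), which shows that $\overline \Sigma$ is a disk with the claimed end structure.

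For configurations (1) and (2), $\chi(\check \Sigma) = 0$, and the displayed inequality above forces $\hat K_\Sigma \equiv 0$ on $\Sigma$ together with $\hat \kappa_{\partial \Sigma} \equiv 0$ on $\partial \Sigma$. The assumption $R_M + H_\Sigma^2 \ge 0$ combined with the explicit formula for $\hat K_\Sigma$ then forces $R_M = 0$, $H_\Sigma = 0$, and $|A_\Sigma| = 0$, so $\Sigma$ is totally geodesic. Since $H_\Sigma = 0$, the identity $\hat \kappa_{\partial \Sigma} = 0$ reduces to $H_{\partial M} = 0$ along $\partial \Sigma$. The main technical obstacle is justifying the distributional Cohn-Vossen inequality for the non-smooth doubled metric when $\hat \kappa_{\partial \Sigma} \not\equiv 0$; in particular one must make sense of the total curvature integrals and end contributions on $\check \Sigma$, which is most naturally handled by the smoothing-via-conformal-change argument already present in the proof of Theorem \ref{thm: Finite topology}.
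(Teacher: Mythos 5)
Your proof is correct and rests on the same key machinery as the paper's: the global positive Jacobi function $u$, the conformal change $d\hat s^2 = u^2 ds^2$ yielding $\hat K_\Sigma \ge 0$ and $\hat\kappa_{\partial\Sigma}\ge 0$, and a Cohn--Vossen inequality to pin down the topology and force the rigidity. The paper avoids your doubling detour (and the Lipschitz-seam regularity issue you rightly flag) by invoking Shiohama's boundary version of Cohn--Vossen directly on $(\Sigma, d\hat s^2)$, which gives exactly the bound $\int_\Sigma \hat K_\Sigma\,d\hat A + \int_{\partial\Sigma}\hat\kappa_{\partial\Sigma}\,d\hat\ell \le \pi(4-4g-2r-k-\ell)$ that you recover after dividing your doubled inequality by two.
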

\begin{proof}
Since $\Sigma$ is strongly stable, it accepts a globally defined positive Jacobi function $u$. Considering a
metric $d\hat s^2$ as in the proof of Theorem \ref{thm: Finite topology} we can conclude that
$\hat K_\Sigma \ge 0$ on $\Sigma$ and $\hat \kappa_{\partial \Sigma} \ge 0$ in all of $\partial \Sigma$,
furthermore
\[\int_{\Sigma} \hat K_\Sigma d \hat A \ge 0\]
with equality if and only if $\Sigma$ is totally geodesic and $R_M = 0$ on $\Sigma$, and
\[\int_{\partial \Sigma} \hat \kappa_{\partial\Sigma} d \hat \ell \ge 0\]
with equality if and only if $H_{\partial M} + H_\Sigma \cos \theta = 0$ along $\partial \Sigma$.

It follows from the Cohn-Vossen Theorem \cite{Shiohama-The-geometry-of-total-curvature}*{Theorem 2.2.1} that
\[\int_{\Sigma} \hat K_\Sigma d \hat A + \int_{\partial \Sigma} \hat \kappa_{\partial\Sigma} d \hat \ell 
\le \pi(4 - 4g - 2r - k - \ell),\]
where $g$ and $r$ are the genus and number of boundary components of $\overline{\Sigma}$, respectively, and $k$,
$\ell$ are the total number of ends and the number of interior ends of $\Sigma$, respectively. Since $\Sigma$ has a boundary and is noncompact, we conclude that $k \ge 1$ and $r \ge 1$. It then follows that $g = 0$, $r = 1$, and $k + \ell = 1$ or $2$.
Note that 
\[k + \ell = \#\{\text{boundary ends}\} + 2 \times \#\{\text{interior ends}\},\]
so the only possible configurations of ends are the ones listed in (1)--(3).
In the case where $k + \ell = 2$, we conclude that $\hat K_\Sigma$ and $\hat \kappa_{\partial \Sigma}$ vanish,
hence $\Sigma$ is totally geodesic, $R_M = 0$ along $\Sigma$ and $H_{\partial M} = 0$ along $\partial \Sigma$, proving the rigidity statement.
\end{proof}

\begin{remark}
Observe that each of the three possibilities of Corollary \ref{cor: discription strongly stable} does occur. For an example of:
\begin{enumerate}
    \item consider $M=\mathbb{R}^2\times [0,1]$ and let $\Sigma$ be an infinite flat strip in $M$ meeting the boundary at a constant angle $\theta\in(0,\pi)$;
    \item let $M = \mathbb{S}^1\times \mathbb{R}^+\times \mathbb{R}$ and take $\Sigma = \mathbb{S}^1\times \mathbb{R}^+ \times\{0\}$;
    \item consider a half-plane in a half-space of $\mathbb{R}^3$.
\end{enumerate}
\end{remark}

\begin{remark}
Combining the inequality in Theorem \ref{thm: Finite topology intro} and Corollary \ref{cor: discription strongly stable}, we can also show that there are no strongly stable noncompact capillary minimal surfaces $\Sigma$ in a 3-manifold $M$ with nonnegative scalar curvature and uniformly mean-convex boundary, i.e., $R_M\geq 0$ and $H_{\partial M}\geq c>0.$ Indeed, with such curvature assumptions the inequality in Theorem \ref{thm: Finite topology intro} implies that $\partial \Sigma$ must be compact. However, the arguments in the proof of Corollary \ref{cor: discription strongly stable} show that $\Sigma$ has a single boundary end and no interior ends, which is a contradiction.
\end{remark}

%%%%%%%%%%%
\begin{comment}\begin{corollary}
Let $\Sigma$ be a noncompact capillary surface in a half-space with finite index and assume that $\cos\theta H_\Sigma\geq 0$, then $\Sigma$ must be minimal.
\end{corollary}
\begin{proof}
From Theorem \ref{thm: Finite topology} it follows that
$\int_\Sigma H_\Sigma^2<\infty$.
Since noncompact CMC surface in $\mathbb{R}^3$ has infinite volume, then $H_\Sigma=0.$ 
\end{proof}

\end{comment}
%%%%%%%%%%%%%%%%%%%

\subsection{Weakly stable capillary surfaces in a half-space of $\R^3$}

Capillary surfaces in a half-space of $\R^3$ are analogous to
complete minimal and CMC surfaces in $\R^3$ in the sense that these are local models for capillary
surfaces in any 3-manifold.
In the following we  characterize strongly stable capillary
surfaces in a half-space of $\R^3$.

\begin{customthm}{\ref{maintheorem1}}
Let $\Sigma$ be a noncompact capillary surface immersed in a half-space of $\R^3$ at a constant angle $\theta$. Assume that $H_\Sigma\cos\theta\geq0$. Then $\Sigma$ is weakly stable if and only if it is a half-plane.
\end{customthm}
\begin{proof}
It is clear from \eqref{second variation of energy} and \eqref{eq: q geodesic curvature} that half-planes are weakly stable in a half-space of $\R^3$. To show the other implication,
let us assume without loss of generality that the half-space is $\{x_3 \ge 0\}$. It suffices to prove the following claim.
\begin{claim}
If $|A_\Sigma|$ is not identically zero, then there exists a compactly supported function $v$ on $\Sigma$ such that
\[Q(v,v)<0, \ \ \text{and}\ \ \ \int_\Sigma v=0.\]
\end{claim}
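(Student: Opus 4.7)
The plan is to exhibit an admissible destabilizing test function of the form $v = \phi u$, where
\[u = \frac{1}{\sin\theta} + \cot\theta\, \langle \nu, -E_3\rangle.\]
I would first verify three key properties: (i) $|\langle\nu, -E_3\rangle| \le 1$ yields $u \ge (1-|\cos\theta|)/\sin\theta > 0$; (ii) since $\langle \nu, -E_3\rangle$ is a Jacobi function on any CMC surface in $\mathbb{R}^3$ and $J(\mathrm{const}) = |A_\Sigma|^2\cdot\mathrm{const}$, we have $Ju = |A_\Sigma|^2/\sin\theta$, equivalently $\Delta u = |A_\Sigma|^2(1/\sin\theta - u)$; (iii) along $\partial\Sigma$ one has $\langle \nu, -E_3\rangle = -\cos\theta$, so $u \equiv \sin\theta$, and a direct computation using the decomposition $E_3 = -\sin\theta\, \eta + \cos\theta\,\nu$ on $\partial\Sigma$ (derived from $\eta = \tfrac{1}{\sin\theta}N + \cot\theta\,\nu$ with $N = -E_3$) gives $\partial_\eta u = \cos\theta\, A_\Sigma(\eta,\eta) = qu$, matching the Robin boundary condition in the second variation (here $q = \cot\theta\, A_\Sigma(\eta,\eta)$ since $h_{\partial M}\equiv 0$ in the half-space). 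Integration by parts on $Q(\phi u,\phi u)$, substituting (ii) and the Robin condition from (iii), produces the key identity
\[Q(\phi u,\phi u) = \int_\Sigma u^2 |\nabla\phi|^2 - \frac{1}{\sin\theta}\int_\Sigma \phi^2 u\, |A_\Sigma|^2\]
for every compactly supported smooth $\phi$; notably no boundary term survives, the Robin condition on $u$ being precisely what cancels the $\int_{\partial\Sigma} q(\phi u)^2$ contribution.

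\medskip

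Next I would apply Theorem \ref{thm: Finite topology}, whose hypotheses hold trivially in the half-space ($R_M\equiv 0$, $H_{\partial M}\equiv 0$, and $H_\Sigma\cos\theta \ge 0$ by assumption) once we know $\Sigma$ has finite index — which follows since weak stability forces the strong index to be at most one. Thus $\Sigma$ is conformal to a compact Riemann surface with boundary minus finitely many punctures (one per end), and $\int_\Sigma|A_\Sigma|^2 < \infty$; in particular $\int_\Sigma u|A_\Sigma|^2 > 0$ by (i) and the hypothesis $|A_\Sigma|\not\equiv 0$. In each end, modeled on the punctured disk $D_* = \{0 < |z| < 1\}$ or its half-disk analogue $D_*^+$, the standard logarithmic cutoff $\phi_\mu(z) = \log(|z|/\mu^2)/\log(1/\mu)$ on the annulus $\mu^2 \le |z| \le \mu$ (equal to $1$ outside the annulus, $0$ inside) satisfies $\int_\Sigma |\nabla \phi_\mu|^2 = O(1/|\log\mu|) \to 0$ by the conformal invariance of the Dirichlet integral in two dimensions.

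\medskip

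To satisfy the admissibility constraint, I would combine two such cutoffs at well-separated scales: set $\delta \ll \epsilon^2$ and put $\phi = \phi_\epsilon - \alpha_\epsilon \phi_\delta$ with $\alpha_\epsilon = \int_\Sigma \phi_\epsilon u/\int_\Sigma \phi_\delta u > 0$, so that $\int_\Sigma \phi u = 0$ by construction. The annular supports $\{\epsilon^2 \le |z| \le \epsilon\}$ and $\{\delta^2 \le |z| \le \delta\}$ of $\nabla\phi_\epsilon$ and $\nabla\phi_\delta$ are disjoint in each end, so there are no cross terms in the gradient and
\[\int_\Sigma u^2|\nabla\phi|^2 = \int_\Sigma u^2|\nabla\phi_\epsilon|^2 + \alpha_\epsilon^2\int_\Sigma u^2|\nabla\phi_\delta|^2.\]
Choosing $\delta$ as an appropriate power of $\epsilon$ (say $\delta = \epsilon^N$), one can arrange $\alpha_\epsilon \to 0$ quickly enough that both summands vanish in the limit; the key input is that $\int \phi_\delta u$ dominates $\int \phi_\epsilon u$, which follows by comparing the induced areas of the regions $\{|z| \ge \mu^2\}$ for $\mu = \epsilon, \delta$ (these grow like a power of $1/\mu$ in each end, by the conformal description of Theorem \ref{thm: Finite topology} together with the fact that $u$ is bounded between two positive constants). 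Meanwhile $|\phi| \le 1+\alpha_\epsilon$ is bounded and $\phi^2 \to 1$ pointwise, so dominated convergence gives $\int_\Sigma \phi^2 u|A_\Sigma|^2 \to \int_\Sigma u|A_\Sigma|^2$. Plugging back into the key identity,
\[Q(\phi u,\phi u) \to -\frac{1}{\sin\theta}\int_\Sigma u|A_\Sigma|^2 < 0 \quad \text{as } \epsilon \to 0,\]
producing an admissible compactly supported function with negative second variation, contradicting weak stability. The principal technical challenge is the rate analysis controlling $\alpha_\epsilon$, which relies on the conformal asymptotic description of ends. The argument covers the free-boundary case $\theta = \pi/2$ (where $u\equiv 1$ and admissibility reduces to $\int \phi = 0$) uniformly with the general angle, giving a direct alternative to the reflection-based proof used by Ambrozio-Buzano-Carlotto-Sharp.
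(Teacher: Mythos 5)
Your proposal follows the same core strategy as the paper: you use the identical test function $u = \tfrac{1}{\sin\theta} + \cot\theta\,\langle\nu, -E_3\rangle$, verify the same key identities ($Ju = |A_\Sigma|^2/\sin\theta$ and $\partial_\eta u = qu$ on $\partial\Sigma$), obtain the same reduction $Q(\phi u, \phi u) = \int_\Sigma u^2|\nabla\phi|^2 - \tfrac{1}{\sin\theta}\int_\Sigma \phi^2 u\,|A_\Sigma|^2$, and invoke Theorem \ref{thm: Finite topology} for the conformal type and $\int_\Sigma|A_\Sigma|^2 < \infty$. Where you diverge from the paper is the mechanism for enforcing $\int_\Sigma \phi u = 0$. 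The paper models the ends as semi-cylinders, builds piecewise-linear cutoffs $\psi_{a_0,b}$ that dip to a tunable level $-b/r$, and applies the intermediate value theorem in $b$ to select one with $\int_\Sigma u\psi_{a_0,b_0}=0$; the only quantitative input is that $\Sigma$ has infinite area, and the Dirichlet energies are uniformly small by choosing $r$ large. You instead use two nested logarithmic cutoffs $\phi_\epsilon, \phi_\delta$ in punctured-disk coordinates and subtract $\alpha_\epsilon\phi_\delta$ to balance, which requires showing $\alpha_\epsilon \to 0$.

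The concrete gap is the claim that the induced areas of $\{|z| \ge \mu^2\}$ ``grow like a power of $1/\mu$, by the conformal description of Theorem \ref{thm: Finite topology}.'' That theorem provides the conformal type and finiteness of total curvature, but says nothing about the growth rate of the conformal factor near a puncture — it could be slower than any power, and with $\delta = \epsilon^N$ for fixed $N$ you would not in general get $\alpha_\epsilon \to 0$. Fortunately the power-rate claim is unnecessary: all you need is that $\int_\Sigma\phi_\delta u \to \infty$ as $\delta \to 0$ for each fixed $\epsilon$, which is a consequence of infinite area (see the proof of Corollary \ref{corollary: capillary must be minimal} or \cite{Frensel-CMCsurfaces-infinite-volume} — a fact the paper's own cutoff construction also relies on). Then choose $\delta = \delta(\epsilon) < \epsilon^2$ small enough that $\alpha_\epsilon \le \epsilon$; since $1/|\log\delta| \le 1$, both Dirichlet terms vanish as $\epsilon \to 0$ and the dominated-convergence step for $\int_\Sigma \phi^2 u|A_\Sigma|^2$ goes through unchanged. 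With that substitution your cutoff construction is correct and gives a genuine alternative to the paper's IVT-based selection, though the paper's continuity argument is arguably cleaner since it avoids any quantitative coupling between scales.
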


Since $\Sigma$ is conformally equivalent to a compact Riemann surface $\bar{\Sigma}$ with boundary and finite punctures $p_1,\ldots,p_n$. Without lost of generality, we assume that $p_1,\ldots,p_k$ are punctures in the interior while $p_{k+1},\ldots,p_n$ are punctures on the boundary. Thus there exists a compact subset $\Sigma_0\subset \Sigma$ such that each component $\Sigma_i$ of $\Sigma\setminus \Sigma_0$ is conformally equivalent to either a semi-cylinder $\mathbb{S}^1\times (0,\infty)$ or a half-semi-cylinder $\mathbb{S}^1_{+}\times (0,\infty)$ with the standard product metric.

Since $|A_\Sigma|$ is not identically zero, there exists $p\in\Sigma$ such that $|A_\Sigma(p)|\neq 0$. We can choose $\Sigma_0$ such that $p\in\Sigma_0$ and 
\begin{equation}\label{choose r}
    \int_{\Sigma_0}|A_\Sigma|^2>\frac{12(n+k)\pi}{r(1-\cos\theta)}
\end{equation}
for some fixed $r>0$ large enough that we will choose later.

We parametrize $\Sigma_i$ for $i=1,\ldots,n$ by conformal coordiantes $(\theta,y_i)\in \mathbb{S}^1\times (0,\infty).$ For each $a>0$, we can define functions $\varphi_i:\Sigma_i\rightarrow \mathbb{R}_+$ by
\begin{equation}
\varphi_i(y_i)=
\begin{cases}
1-\frac{y_i}{r}, & 0\leq y_i\leq 2r\\
-1,& 2r\leq y_i\leq 2r+a\\
-1+\frac{y_i-(2r+a)}{r},& 2r+a\leq y_i\leq 3r+a\\
0, & 3r+a\leq y_i.
\end{cases}
\end{equation}
We then define a function $\varphi_a:\Sigma\rightarrow \mathbb{R}_+$ such that $\varphi_a\equiv 1$ on $\Sigma_0$ and $\varphi_a=\varphi_i$ on $\Sigma_i$ for $i=1,\ldots,n$. Let $u:\Sigma\rightarrow \mathbb{R}_+$ be
\[u=\frac{1}{\sin\theta}+\cot\theta\ll \nu,-E_3\rl.\]
where $\nu$ is the unit normal vector field of $\Sigma$. Note that $0<\frac{1-\cos\theta}{\sin\theta}\leq u\leq \frac{2}{\sin\theta}.$ We claim that there exists a $a=a_0>0$ such that
\[\int_\Sigma u\varphi_{a_0}<0.\]
In fact, 
\begin{align*}
    \int_\Sigma u\varphi_a&\leq \int_{\Sigma_0\cup \left(\cup_{i=1}^n\{\ 0\leq y_i\leq 2r\}\right)}u\varphi_a+\int_{\cup_{i=1}^n\{\ 2r \leq y_i\leq 3r+a\}}-u\\
    &\leq \int_{\Sigma_0\cup \left(\cup_{i=1}^n\{\ 0\leq y_i\leq 2r\}\right)}u\varphi_a-\frac{1-\cos\theta}{\sin\theta}\sum_{i=1}^n|\{\ 2r \leq y_i\leq 3r+a\}|
\end{align*}
where the first term on the right-hand side is independent of $a$, thus a fixed number. Since the area of $\Sigma$ is infinite, we can choose $a_0>0$ such that the desired inequality holds.

We now define new functions $\psi_i:M_i\rightarrow \mathbb{R}_+$ by
\begin{equation}
\psi_i(y_i)=
\begin{cases}
1-\frac{y_i}{r}, & 0\leq y_i\leq r+b\\
-b/r,& r+b\leq y_i\leq 3r+a_0-b\\
-1+\frac{y_i-(2r+a_0)}{r},& 3r+a_0-b\leq y_i\leq 3r+a_0\\
0, & 3r+a_0\leq y_i.
\end{cases}
\end{equation}
We then define a function $\psi_{a_0,b}:\Sigma\rightarrow \mathbb{R}_+$ such that $\psi_{a_0,b}=1$ on $\Sigma_0$ and $\psi_{a_0,b}=\psi_i$ on $\Sigma_i$. Note that $\psi_{a_0,r}=\varphi_{a_0}$ and $\psi_{a_0,0}\geq 0.$ Since the integral $\int_\Sigma u\psi_{a_0,b}$ is continuous in $b$ and $\int_\Sigma u\psi_{a_0,0}>0$, there exists a $0<b_0\leq r$ such that
\[\int_{\Sigma}u\psi_{a_0,b_0}=0.\]

Let $v=u\psi_{a_0,b_0}$, notice it is a piece-wise smooth function with compact support. We now calculate $Q(v,v)$. First note that
\begin{align*}
u|_{\partial \Sigma} 
 &= \frac{1}{\sin\theta} + \cot \theta \langle
 \sin\theta T + \cos \theta E_3, - E_3 \rangle\\
 &= \frac{1}{\sin\theta} - \frac{\cos^2\theta}{\sin \theta}\\
 &= \sin \theta\\
 &= \langle \eta, -E_3 \rangle.
\end{align*}
It follows from simple computations that $\eta$ is a principal direction of $\Sigma$ (see e.g. \cite{Ainouz-Souam-capillarysurfaceinslab}*{Lemma 2.2}, \cite{Wang-Xia-Uniqueness-of-stable}*{Prop 2.1}). Using this fact we obtain
\begin{align}
\frac{\partial u}{\partial \eta} 
&= \cot \theta \langle D_\eta \nu, -E_3 \rangle \nonumber \\
&= \cot \theta A_\Sigma(\eta,\eta) \langle \eta, -E_3 \rangle \nonumber\\
&= \cot \theta A_\Sigma(\eta,\eta)u|_{\partial \Sigma}. \nonumber\\
&= q u. \label{eq: outer normal varphi}
\end{align}
Since $\Delta \nu = -|A_\Sigma|^2\nu$ in the coordinate-wise sense (see e.g \cite{doCarmo-Stability-of-hypersurfaces}*{Proposition 2.24}), we have
\begin{equation}\label{eq: lap varphi}
\Delta u + |A_\Sigma|^2u = \frac{|A_\Sigma|^2}{\sin \theta}.
\end{equation}

%Note that 
%\begin{equation*}
%\begin{cases}
%\Delta u+|A_\Sigma|^2u=\frac{|A_\Sigma|^2}{\sin\theta}, &\ \text{in}\ \Sigma\\
%\frac{\partial u}{\partial \eta}=\cot\theta A_{\Sigma}(\eta,\eta)u, & \ \text{on}\ %\partial\Sigma.
%\end{cases}
%\end{equation*}
Then we have that 
\begin{align}\label{calculate Q}
    Q(v,v)& = \int_\Sigma u^2 |\nabla \psi_{a_0,b_0}|^2 + \psi_{a_0,b_0}^2|\nabla u|^2 + 2u\psi_{a_0,b_0}\langle \nabla u, \nabla \psi_{a_0,b_0} \rangle - \int_{\partial \Sigma} qu^2\psi_{a_0,b_0}^2 \nonumber\\
    &= \int_\Sigma u^2 |\nabla \psi_{a_0,b_0}|^2 + \frac{1}{2}\psi_{a_0,b_0}^2\Delta(u^2) - \psi_{a_0,b_0}^2u\Delta u + \frac{1}{2}\langle \nabla(u^2), \nabla(\psi_{a_0,b_0}^2) \rangle\nonumber\\
    & \qquad - \int_{\partial \Sigma} qu^2\psi_{a_0,b_0}^2 \nonumber \\
    &=  \int_\Sigma u^2 |\nabla \psi_{a_0,b_0}|^2 - \psi_{a_0,b_0}^2u\Delta u +  \int_{\partial \Sigma} u\psi_{a_0,b_0}^2( \frac{\partial u}{\partial \eta} - qu) \nonumber \\
    &= \int_\Sigma u^2|\nabla \psi_{a_0,b_0}|^2-\psi_{a_0,b_0}^2u\frac{|A_\Sigma|^2}{\sin\theta}\nonumber\\
    &\leq \frac{4}{\sin^2\theta}\int_\Sigma |\nabla \psi_{a_0,b_0}|^2-\frac{1-\cos\theta}{\sin^2\theta}\int_\Sigma \psi_{a_0,b_0}^2|A_\Sigma|^2\nonumber\\
    &\leq \frac{4}{\sin^2\theta}\sum_{i=1}^n\int_{\Sigma_i}|\nabla \psi_{a_0,b_0}|^2-\frac{1-\cos\theta}{\sin^2\theta}\int_{\Sigma_0} |A_\Sigma|^2
\end{align}
Since the Dirichlet energy is conformally invariant in dimension two, we have that for $i=1,\ldots,k$
\[\int_{\Sigma_i}|\nabla \psi_{a_0,b_0}|^2=\int_{\Sigma_i}|D\phi_i|^2\ d\theta dy_i=\frac{2\pi(r+2b_0)}{r^2}\leq \frac{6\pi}{r}\]
and for $i=k+1,\ldots,n$
\[\int_{\Sigma_i}|\nabla \psi_{a_0,b_0}|^2\leq \frac{3\pi}{r}.\]
Then it follows from $(\ref{calculate Q})$ and $(\ref{choose r})$ that
\[Q(v,v)<0.\]
This completes the proof of the claim. Since the surface $\Sigma$ is weakly stable, by the claim we must have that $|A_\Sigma|\equiv 0$ and thus $\Sigma$ is a half-plane.
\end{proof}

\begin{remark}\label{remark: regidity compact case}
Note that same arguments show that a weakly stable noncompact capillary surface (without angle condition) immersed in a half-space of $\R^3$ cannot have compact boundary. Also, using the same test function we constructed above, one can show directly that there is no strongly stable compact capillary surface immersed in a half-space of $\R^3$.
\end{remark}

\subsection{$L^2$ characterization of the strong index}
Fisher-Colbrie \cite{Fischer-Colbrie-On-complete-minimal}*{Proposition 2} showed that the index of a complete noncompact minimal surface can be realized as the cardinality of a set of $L^2$ eigenfunctions defined globally on the surface. This fact is important to study the relation between the index and the topology of these surface (see e.g. \cite{Ros-onesided-minimalsurface}*{Theorem 17}).
In what follows we will show that something similar also holds for capillary surfaces.
Note that the functions $f_i$ below are eigenfunctions in the weak sense.

\begin{proposition}\label{prop: L2 characterization} 
Let $\Sigma$ be a noncompact capillary surface immersed in a 3-manifold with smooth boundary
and let $I = \Index_s(\Sigma)$. Assume $I < \infty$,
then there exists a subspace $W$ of $L^2(\Sigma)$
having an orthonormal basis $f_1, \ldots, f_I$ consisting of
eigenfunctions for $Q$ with eigenvalues $\lambda_1, \ldots, \lambda_I$,
respectively, such that each $\lambda_i < 0$. Moreover, for all $\varphi \in
C^\infty_0(\Sigma) \cap W^\perp$ we have
\[Q(\varphi, \varphi) \ge 0.\]

\end{proposition}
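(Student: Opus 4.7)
The plan is to mimic Fischer-Colbrie's approach \cite{Fischer-Colbrie-On-complete-minimal}*{Proposition 2}, adapting it to the mixed Dirichlet–Robin boundary value problem that appears in the capillary setting.

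First I would fix an exhaustion $\Omega_1 \subset \Omega_2 \subset \cdots$ of $\Sigma$ by bounded open subsets with smooth boundary, chosen so that $\partial \Omega_n \setminus \Gamma$ meets $\Gamma$ transversally. Since $I=\Index_s(\Sigma)=\lim_n \Index_s(\Omega_n)$, we may pass to a tail of the exhaustion and assume that every $\Omega_n$ has exactly $I$ negative eigenvalues for the quadratic form $Q$ acting on functions vanishing on $\partial \Omega_n \setminus \Gamma$ (with Robin condition $\partial_\eta u = qu$ on $\Gamma\cap\partial\Omega_n$). Let $\lambda_1^{(n)} \le \cdots \le \lambda_I^{(n)} < 0$ denote these eigenvalues with $L^2(\Omega_n)$-orthonormal eigenfunctions $\varphi_i^{(n)}$. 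Domain monotonicity gives that $n \mapsto \lambda_i^{(n)}$ is nonincreasing, so each sequence converges to some $\lambda_i \in [-\infty, 0)$. A lower bound comes from the strong index assumption: outside a fixed compact set $C \subset \Sigma$ the form $Q$ is nonnegative (by the characterization of strong stability as in Proposition \ref{prop: positive jacobi field}), so the test function $\varphi_i^{(n)}$ restricted to $\Omega_n \setminus C$ contributes nonnegatively, which, combined with an elementary estimate on $C$, shows $\lambda_i^{(n)}$ is bounded below.

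Next I would extract limiting eigenfunctions. Interior elliptic regularity together with the Schauder-type boundary estimates of Agmon--Douglis--Nirenberg \cite{Nirenberg-Estimates-near-boundary} (as already invoked in the proof of Proposition \ref{prop: positive jacobi field}) give uniform $C^{2,\alpha}_{\text{loc}}$ bounds on $\varphi_i^{(n)}$ away from $\partial \Omega_n \setminus \Gamma$; here it is crucial that the boundary operator $\partial_\eta - q$ is the same for every $n$ on $\Gamma$. By a diagonal subsequence argument I obtain limits $f_i \in C^2(\Sigma)$ satisfying $Jf_i = -\lambda_i f_i$ on $\Sigma$ and $\partial_\eta f_i = q f_i$ on $\partial \Sigma$. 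Fatou's lemma applied to the $L^2(\Omega_n)$-normalization yields $\int_\Sigma f_i^2 \le 1$, so $f_i \in L^2(\Sigma)$, and one can likewise pass orthogonality to the limit provided none of the $f_i$ is trivial.

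The main obstacle is precisely to rule out loss of mass at infinity, i.e.\ to show that every $f_i$ is nonzero and that the family $\{f_i\}$ remains linearly independent. The idea here is to use the negativity of $\lambda_i$ together with the stability of $Q$ outside $C$: if a multiplier $\chi_R$ is a cutoff supported in $\Sigma \setminus C$ equal to $1$ outside a larger compact set, then $Q(\chi_R \varphi_i^{(n)}, \chi_R \varphi_i^{(n)}) \ge 0$, and expanding this against the eigenvalue identity $Q(\varphi_i^{(n)},\varphi_i^{(n)}) = \lambda_i^{(n)} < 0$ forces a uniform lower bound $\int_C (\varphi_i^{(n)})^2 \ge \delta > 0$ for some $\delta$ independent of $n$ (the cross and gradient-of-cutoff terms being absorbed using the gradient bounds from elliptic regularity). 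This localizes the mass and yields $\int_C f_i^2 \ge \delta$, so the $f_i$ are nontrivial; a standard Gram–Schmidt upgrade inside the $I$-dimensional span then produces the desired $L^2$-orthonormal family. Finally, the inequality $Q(\varphi,\varphi) \ge 0$ for $\varphi \in C^\infty_0(\Sigma) \cap W^\perp$ follows from the min-max characterization applied on any $\Omega_n$ containing $\operatorname{supp}\varphi$: if $Q(\varphi,\varphi) < 0$, then together with $W$ one would produce an $(I+1)$-dimensional negative subspace on $\Omega_n$, contradicting $\Index_s(\Omega_n) = I$.
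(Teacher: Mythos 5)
Your proposal follows essentially the same Fischer-Colbrie framework as the paper: exhaustion, cutoff supported in a strongly stable exterior domain, eigenvalue equation, and a no-mass-loss argument to pass to the limit. The overall architecture is right. There is, however, one genuine gap and one imprecision.

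The gap concerns the Robin boundary term. In the closed case treated by Fischer-Colbrie, the stability operator has no boundary integral, so ``an elementary estimate on $C$'' really is elementary. Here, $Q$ contains the term $-\int_{\partial\Sigma}qu^2$, and $q$ may be negative and large. Both the uniform lower bound $\lambda_i^{(n)}\ge -C$ and the no-mass-loss estimate require controlling $\int_{\partial\Sigma}(1-\zeta^2)q\varphi^2$ in terms of $\int_\Sigma(1-\zeta^2)|\nabla\varphi|^2$ and $\int_{B_{2R}}\varphi^2$. This is not elementary: it needs a weighted Poincar\'e/trace inequality of the form
\[\int_{\Gamma}w\varphi^2\le\epsilon\int_\Sigma w|\nabla\varphi|^2+C(\epsilon)\int_\Sigma\varphi^2,\]
where $w=1-\zeta^2$ is Lipschitz with compact support. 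The paper proves exactly this in Appendix~A (Lemma~\ref{lemma: poincare}) and uses it to derive the estimate $\int_{B_R}|\nabla\varphi|^2\le 2Q(\varphi,\varphi)+C_R'\int_{B_{2R}}\varphi^2$, which is the source of both the uniform eigenvalue lower bound and the local $H^1$ bounds for the compactness argument. Without this input, nothing forces the trace term to be subordinate to the Dirichlet energy, and your ``bounded below'' step does not close. You should make this trace lemma explicit rather than absorbing it into ``an elementary estimate.''

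The imprecision concerns the no-mass-loss step. You invoke ``gradient bounds from elliptic regularity'' to absorb the cross term $2\int\chi_R\varphi_i^{(n)}\langle\nabla\chi_R,\nabla\varphi_i^{(n)}\rangle$, but that is not what makes the argument close. The paper instead integrates this cross term by parts and substitutes the eigenvalue equation (including the Robin condition $\partial_\eta f_{i,\rho}=qf_{i,\rho}$), so that the boundary and bulk terms cancel exactly against those in the stability inequality, leaving the clean estimate $-\lambda_{i,\rho}\int\zeta^2f_{i,\rho}^2\le\int|\nabla\zeta|^2f_{i,\rho}^2\le 36/R^2$. This only uses the $L^2$-normalization and $|\nabla\zeta|\le 6/R$; no a priori gradient bound on the eigenfunctions is required, and trying to supply one via elliptic regularity would make the constants depend on $R$ in a way that defeats the purpose. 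Your outline should replace the regularity appeal with this integration-by-parts identity.

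Apart from these two points, the argument (domain monotonicity of eigenvalues, local compactness via Rellich--Kondrachov plus the trace map, diagonal extraction, and the final min-max step for $\varphi\in C_0^\infty(\Sigma)\cap W^\perp$) matches the paper.
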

\begin{proof}
For all $R >0$, let $B_{R} \subset \Sigma$ be a ball of 
radius $R$ centred at
some point that will be fixed through this proof.
Take $R_0 \ge 4$ such that $\Sigma \setminus \overline{B_{R_0}}$ is strongly stable. For $R \ge R_0$, let $\zeta \in
C^\infty(\Sigma)$ be such that
\begin{align*}
	\zeta &= 0 \text{ on $B_R$},\\
	\zeta &= 1 \text{ on $\Sigma \setminus B_{2R}$},
\end{align*}
and
\[|\nabla \zeta| \le \frac{6}{R},\ |\nabla \zeta|^2 < \frac{4(1 - \zeta^2)}{R^2}.\]
For the construction of $\zeta$ see \cite{Fischer-Colbrie-On-complete-minimal}.
For simplicity we will write $\text{Ric}_M(\nu, \nu) + |A_\Sigma|^2$ as $p$.
Since $\Sigma \setminus \overline{B_R}$ is strongly stable, we have that for all $\varphi
\in C^\infty_0(\Sigma)$
\begin{equation}\label{eq: stability for xi phi}
\int_{\partial \Sigma} q(\zeta \varphi)^2 + \int_\Sigma p (\zeta\varphi)^2
\le \int_\Sigma |\nabla \zeta \varphi|^2
= \int_\Sigma \zeta^2|\nabla \varphi|^2 + 2\zeta\varphi\ll\nabla \zeta, \nabla
\varphi\rl + \varphi^2|\nabla \zeta|^2.
\end{equation}
It follows from the Cauchy-Schwarz inequality and the arithmetic-geometric mean inequality that
\[\int_\Sigma 2\zeta\varphi\ll\nabla \zeta, \nabla \varphi\rl \le 
\int_{B_{2R}} \zeta^2 \varphi^2 + |\nabla \zeta|^2|\nabla \varphi|^2.\]
Adding $Q(\varphi, \varphi)$ to both sides of \eqref{eq: stability for xi phi}
and rearranging gives that
\begin{align}
-\int_{\partial \Sigma} (1 - \zeta^2) q\varphi^2 -
&\int_\Sigma ( 1 - \zeta^2)p \varphi^2 + 
\int_\Sigma ( 1 - \zeta^2)|\nabla \varphi|^2 \label{eq: stability eq 2}\\
& \le Q(\varphi, \varphi) + \int_{B_{2R}} \varphi^2 +
\frac{4}{R^2}\int_\Sigma (1 - \zeta^2)|\nabla \varphi|^2.\nonumber
\end{align}
Using Lemma \ref{lemma: poincare} we conclude
\begin{equation}\label{eq: bound for int phi}
\int_{\partial \Sigma} (1 - \zeta^2)q\varphi^2 
\le \sup_{B_{2R}}|q| \int_{\partial \Sigma} (1 - \zeta^2)\varphi^2\le
\frac{1}{4} \int_\Sigma (1 - \zeta^2)|\nabla \varphi|^2 + C_R \int_{B_{2R}}
\varphi^2
\end{equation}
for some constant $C_R$ depending on $R$. Since $R \ge 4$ it follows from \eqref{eq: stability eq 2} and \eqref{eq: bound for int phi} that
\begin{equation}\label{eq: bound for nabla phi}
\int_{B_R}|\nabla \varphi|^2 \le 2Q(\varphi, \varphi) +
C_R'\int_{B_{2R}}\varphi^2
\end{equation}
for some new constant $C_R'$ depending only on $R$.

Let $\rho > R_0$, then there are functions $f_{1, \rho},
\ldots, f_{I, \rho} \in L^2(B_\rho)$ forming an orthonormal 
set such that each $f_{i, \rho}$ is an eigenfunction of $Q$ in the sense
that
\[
\begin{cases}
\Delta f_{i, \rho} + pf_{i, \rho} + \lambda_{i, \rho} f_{i, \rho} = 0
&\text{ in } B_{\rho}\\
\frac{\partial f_{i, \rho}}{\partial \eta} -qf_{i, \rho} = 0 
&\text{ on } \partial \Sigma \cap B_\rho\\
f_{i, \rho} = 0 &\text{ on } \partial B_\rho \setminus \partial \Sigma.
\end{cases}
\]
for some eigenvalue $\lambda_{i, \rho} < 0$. Extend $f_{i, \rho}$  to be 0
outside $B_\rho$. Since $\max_i \lambda_{i, \rho}$ is decreasing in
$\rho$ we can assume that $\lambda_{i,\rho} \le -\epsilon_0$ for some constant 
$\epsilon_0 > 0$. In addition, from \eqref{eq: bound for nabla phi} it follows that
$Q(f_{i, \rho}, f_{i, \rho}) \ge -C_R  \int_\Sigma f_{i, \rho}^2$ , thus there is a
constant $C > 0$ such that $\lambda_{i, \rho} \ge - C$ for all $i$ and $\rho$ as
above. Notice that
\begin{align*}
\int_\Sigma 2 \zeta f_{i, \rho} \ll \nabla f_{i, \rho}, \nabla \zeta \rl
&= \frac{1}{2} \int_\Sigma \ll \nabla f_{i, \rho}^2 ,\nabla \zeta^2 \rl\\
&= \int_{\partial \Sigma} \zeta^2 f_{i, \rho} \frac{\partial f_{i, \rho}}{\partial
\eta} - \int_\Sigma \zeta^2( f_{i, \rho}\Delta f_{i, \rho} + |\nabla f_{i, \rho}|^2)\\
&= \int_{\partial \Sigma} q(\zeta f_{i, \rho})^2  -
\int_\Sigma \zeta^2( -(p + \lambda_{i, \rho})f_{i, \rho}^2 + |\nabla f_{i, \rho}|^2).
\end{align*}
Substituting it in \eqref{eq: stability for xi phi} we can conclude that
\[ - \lambda_{i, \rho}\int_\Sigma \zeta^2 f_{i, \rho}^2 \le  \int_\Sigma |\nabla
\zeta|^2 f_{i, \rho}^2 \le \frac{36}{R^2}.\]
Since $\lambda_{i, \rho} \le -\epsilon_0$ we can conclude that
\begin{equation}\label{eq: no mass loss at infty}
\int_{\Sigma \setminus B_{2R}} f_{i, \rho}^2 \le \frac{C'}{R^2}
\end{equation}
for some constant $C'$. Inequality \eqref{eq: bound for nabla phi} and the fact that $Q(f_{i, \rho}, f_{i, \rho}) <0$ imply
\begin{equation}\label{eq: local energy bound}
\int_{B_R} |\nabla f_{i, \rho}|^2 + f_{i, \rho}^2 \le C_R' + 1.
\end{equation}
By the Reillich-Kondrachov compactness theorem the embedding $H^1(B_R) \to L^2(B_R)$ and the trace map
$H^1(B_R) \to L^2(B_R \cap \partial \Sigma)$ are both compact (see \cite{Adams-Sobolev-spaces}*{Theorem 6.3}).
So using \eqref{eq: local energy bound}, a standard diagonal argument can be used to construct a sequence 
$R_j \to \infty$ and functions $f_1, \ldots, f_I$ in $\Sigma$ such that for all
$i = 1, \ldots, I$ and $R > 0$ we have as $j \to \infty$
\begin{gather*}
f_{i, R_j} \rightarrow f_i \text{ strongly in } L^2(B_{R}),\\
f_{i, R_j}|_{\partial \Sigma} \rightarrow f_i|_{\partial \Sigma} 
\text{ strongly in } L^2(B_{R}\cap \partial \Sigma),\\
f_{i, R_j} \rightarrow f_i \text{ weakly in } H^1(B_{R}).
\end{gather*}
It follows that the functions $f_i$ are eigenfunctions of $Q$ in the weak sense, and the corresponding eigenvalues are
$\lambda_i = \lim_{j \to \infty} \lambda_{i, R_j}$. Furthermore, it follows from \eqref{eq: no mass loss at
infty} that $f_1, \ldots, f_I$ form an orthonormal set. Let $W$ be the linear space spanned by these
functions and take $\varphi \in C^\infty_0(\Sigma) \cap W^\perp$, then for all
$\rho > R$ we can write
\[\varphi = \sum_{i = 1}^I  a_{i, \rho} f_{i, \rho} + \varphi_\rho \]
where $a_{i, \rho} = \int_\Sigma \varphi f_{i, \rho}$ and $\varphi_\rho \perp f_{i,
\rho}$. It is not hard to see that $a_{i, R_j} \to 0$ as $j \to \infty$. Since
$\lambda_{i, R_j}$ are uniformly bounded below, we can conclude that
\[Q(\varphi, \varphi) = Q(\varphi_{R_j}, \varphi_{R_j}) + \epsilon_{R_j}\ge
\epsilon_{R_j} \to 0.\]
Hence $Q(\varphi, \varphi) \ge 0$. This completes the proof.
\end{proof}

In the following corollary we give an application of Proposition \ref{prop: L2 characterization}. It further characterizes the topology of an \textit{index one} free boundary CMC surface in a 3-manifold with boundary.

\begin{corollary}
Let $M$ be an oriented flat 3-manifold with smooth weakly convex boundary and let $\Sigma$ be a 
noncompact free boundary CMC surface with strong index one immersed in
$M$.
Then $\Sigma$ is conformally equivalent to a compact
Riemann surface  $\bar{\Sigma}$ of genus $g$ and $r$ boundary components punctured at $k$ points, and
\[k+\ell\leq 4\left\lfloor\frac{1-g}{2}\right\rfloor - 2r + 8,\]
where $\ell$ is the number of boundary punctures. In particular, $g\leq 3$, $r \le 3$, $g + r \le 4$ and $k + \ell \le 6$.
\end{corollary}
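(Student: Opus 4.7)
The plan is to exploit the $L^2$ characterization of the strong index (Proposition \ref{prop: L2 characterization}) by producing test functions from harmonic tangential $1$-forms on the conformal compactification. Since $M$ is flat ($R_M = 0$) with weakly mean convex boundary ($H_{\partial M} \ge 0$) and $\theta = \pi/2$ makes $\cos \theta = 0$, the hypotheses of Theorem \ref{thm: Finite topology intro} are satisfied, and $\Sigma$ is conformally equivalent to $\overline\Sigma \setminus \{p_1, \ldots, p_k\}$ for a compact Riemann surface $\overline \Sigma$ of genus $g$ with $r$ boundary components, with $\ell$ of the punctures lying on $\partial \overline\Sigma$. Because $\Index_s(\Sigma) = 1$, Proposition \ref{prop: L2 characterization} yields a single $L^2$ eigenfunction $f_1$ with negative eigenvalue such that every $\varphi \in C_0^\infty(\Sigma)$ satisfying $\int_\Sigma f_1 \varphi = 0$ obeys $Q(\varphi, \varphi) \ge 0$.

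For each $\xi$ in a suitable subspace $V \subseteq \mathcal{H}^1_T(\overline\Sigma, \partial \overline \Sigma)$, I would use the flatness of $M$ to work in a local parallel frame $\{E_j\}_{j=1}^3$, producing the six functions $u_j = \langle \xi, E_j \rangle$ and $u_j^\star = \langle \star \xi, E_j \rangle$, $j = 1, 2, 3$. Applying the calculation of Proposition \ref{calculation} with $\sffM_M = 0$, $R_M = 0$, and $\cot \theta = 0$, and summing over both contributions using the pointwise identity $\sum_{i=1}^2 \bigl[|A_\Sigma(e_i, \xi)|^2 + |A_\Sigma(e_i, \star \xi)|^2\bigr] = |A_\Sigma|^2 |\xi|^2$, one obtains
\[\sum_{j=1}^3 \bigl[Q(u_j, u_j) + Q(u_j^\star, u_j^\star)\bigr] = -\int_\Sigma H_\Sigma^2 |\xi|^2 - 2 \int_{\partial \Sigma} H_{\partial M} |\xi|^2 \le 0.\]
A standard logarithmic cutoff near each puncture turns the $u_j, u_j^\star$ into admissible compactly supported test functions with the same identity holding up to an error that vanishes as the cutoff radius shrinks.

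A rank--nullity argument in the spirit of the proof of Theorem \ref{main theorem boundary}, now using $L^2$-orthogonality to $f_1$ in place of orthogonality to admissible eigenfunctions, then gives the bound: the six linear functionals $\xi \mapsto \int_\Sigma f_1 u_j$ and $\xi \mapsto \int_\Sigma f_1 u_j^\star$ impose at most six real conditions on $V$, so if $\dim V$ is sufficiently large there exists a nonzero $\xi \in V$ with all six cutoff test functions $L^2$-orthogonal to $f_1$. Proposition \ref{prop: L2 characterization} then forces each $Q \ge 0$, which combined with the displayed identity makes $|\xi| H_\Sigma \equiv 0$ on $\Sigma$ and $|\xi| H_{\partial M} \equiv 0$ along $\partial \Sigma$; Lemma \ref{lemma: max principle harmonic vector field}, together with unique continuation for harmonic vector fields, then forces $\xi \equiv 0$, a contradiction. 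The main obstacle is matching the parity-dependent term $4\lfloor (1-g)/2\rfloor$ in the inequality, which calls for the dimension count to be performed on the orientable double $\check{\overline \Sigma}$ (of genus $\tilde g = 2g + r - 1$) and for the space of holomorphic differentials to be split into the two eigenspaces of the anti-holomorphic doubling involution; carefully distinguishing interior punctures (each contributing two real conditions, i.e., a complex zero) from boundary punctures (each contributing one) produces the asymmetric inequality $k + \ell \le 4 \lfloor (1-g)/2\rfloor - 2r + 8$, from which $g \le 3$, $r \le 3$, $g + r \le 4$, and $k + \ell \le 6$ follow using $k + \ell \ge 1$ (since $\Sigma$ is noncompact).
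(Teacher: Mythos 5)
Your proposal diverges from the paper's proof after the shared setup (Theorem~\ref{thm: Finite topology intro} plus Proposition~\ref{prop: L2 characterization}).  The paper does \emph{not} run a rank--nullity argument on harmonic vector fields here: it caps the boundary circles of $\bar\Sigma$ to form a closed genus-$g$ surface $\hat\Sigma$, picks a holomorphic branched cover $\Phi:\hat\Sigma\to\mathbb{S}^2$ of degree at most $\lfloor(g+3)/2\rfloor$ (Riemann--Roch), uses the Hersch balancing trick to arrange $\int_\Sigma f_1(\Psi\circ\Phi)=\vec 0$, tests $Q$ against the three coordinate functions of $\Psi\circ\Phi$, and then invokes the Cohn--Vossen inequality to bound $-\int_\Sigma 2K_\Sigma-\int_{\partial\Sigma}2\kappa_{\partial\Sigma}$ from below by $-2\pi(4-4g-2r-k-\ell)$.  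The puncture count enters \emph{only} through Cohn--Vossen, and the parity-dependent term $\lfloor(g+3)/2\rfloor=\lfloor(1-g)/2\rfloor+2$ comes from the degree of the map to $\mathbb{S}^2$, not from any test-function dimension count.

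The route you propose has two genuine gaps.  First, $\dim\mathcal H^1_T(\bar\Sigma,\partial\bar\Sigma)=2g+r-1$ carries no information about $k$ or $\ell$, so a rank--nullity argument with six linear constraints would, at best, yield a bound of the shape $2g+r-1\le 6$ (and even that requires dealing with the equality case, since your displayed inequality only gives $\sum_j Q\le 0$, not $\sum_j Q<0$, when $H_\Sigma=0$ and $H_{\partial M}\equiv 0$).  Such a bound neither controls $k+\ell$ nor reproduces the floor-function term; your remark about interior punctures contributing two real conditions and boundary punctures one is not substantiated — in a rank--nullity scheme the punctures impose no conditions on $\xi$, and it is unclear what linear functional they would define.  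Second, the coordinate functions $u_j=\langle\xi,E_j\rangle$ require a \emph{global} parallel frame, which a flat $3$-manifold need not admit (flat $\ne$ embedded in $\R^3$; nontrivial holonomy is possible).  The paper's test functions $(\Psi\circ\Phi)_i$ are intrinsic to $\hat\Sigma$ and sidestep this entirely; flatness is used only to kill $\operatorname{Ric}_M(\nu,\nu)$ in the Gauss equation.  Replacing the harmonic-form machinery with the meromorphic-map/Hersch-balancing/Cohn--Vossen chain is the key idea you are missing.
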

\begin{proof}
By Theorem \ref{thm: Finite topology intro}, $\Sigma$ is conformally equivalent to a compact Riemann surface $\bar{\Sigma}$ of genus $g$ with $r$ boundary components punctured at $k$ points $p_1,\ldots,p_k$. Let $ds^2$ be the metric on $\Sigma$. Let $\rho$ be a positive smooth function such that $(\Sigma,\rho ds^2)$ is complete and has finite area. Indeed, since each end is conformal either to a punctured disk or  half-disk, we can take conformal coordinates centered at zero near each end and we can let $\rho=1/|z|$ in each of these neighborhoods,  so that the metric is complete and the surface has finite area. Let $d\tilde{s}^2=\rho ds^2$. Since the Dirichlet integral is conformally invariant in dimension 2, we have that 
\[\tilde{Q}(u,u)=\int_{\Sigma}|\tilde{\nabla}u|^2-|A_\Sigma|^2\rho^{-1}u^2 d\tilde A -\int_{\partial\Sigma}\rho^{-1/2}qu^2 d \tilde \ell\]
coincides with the stability operator $Q(u,u)$ for any compactly supported function $u$ on $\Sigma$. Since $(\Sigma, ds^2)$ has index one, $\tilde{Q}$ has index one. By Proposition \ref{prop: L2 characterization}, there exists an eigenfunction $f_1\in L^2(\Sigma,d\tilde{s}^2)$ corresponding to a negative eigenvalue for the following eigenvalue problem:
\begin{equation*}
\begin{cases}
\tilde{\Delta} u+|A_\Sigma|^2\rho^{-1}u+\lambda u=0 & \text{in}\ \Sigma\\
\frac{\partial u}{\partial \tilde{\eta}}=\rho^{-1/2}qu & \text{on}\ \partial \Sigma.
\end{cases}
\end{equation*}
The construction in Proposition \ref{prop: L2 characterization} and standard arguments imply that $f_1$ is strictly positive and smooth. Furthermore, we have that $Q(u,u)\geq 0$ for any $u\in C^\infty_c(\Sigma)$ that is $L^2(\Sigma,d\tilde{s}^2)$-orthogonal to $f_1.$

We claim that $Q(u,u)\geq 0$ for any $u\in C^\infty(\bar{\Sigma})$ that is $L^2(\Sigma,d\tilde{s}^2)$-orthogonal to $f_1$. As $k_{\partial\Sigma}\geq0$, \eqref{eq: finite integral of p} implies that $|A_\Sigma|^2$ and $q$ are integrable, thus $Q  (u,u)$ is finite. Note that $uf_1$ is in $L^1(\Sigma,d\tilde{s}^2)$ since $u, f_1\in L^2(\Sigma,d\tilde{s}^2)$. The proof of the claim follows from the standard cut-off arguments (see e.g. \cite{Rotore-Index-one}*{Prop. 1.1}).

%%%%%%%%%%%%%%%%%%%%%%
\begin{comment}
Let $\Phi=(\Phi_1,\Phi_2):\bar{\Sigma}\rightarrow \mathbb{D}$ be proper conformal branched cover with degree less than or equal to $g+r.$ Since $(\Sigma,\tilde{d}s^2)$ has finite area, there exists a conformal diffeomorphism of $\mathbb{D}$ such that the composed map satisfys
\[\int_{\tilde{\Sigma}}f_1\Phi=\vec{0}.\]
From previous claim it follows that for $i=1,2$
\[\int_\Sigma |\nabla \Phi_i|^2-p\Phi_i^2-\int_{\partial\Sigma}q\Phi_i^2\geq 0.\]
Summing yields
\begin{align*}
   2\pi \operatorname{deg}(\Phi)&=\int_\Sigma |\nabla \Phi|^2\\
   &\geq \int_\Sigma p|\Phi|^2+\int_{\partial \Sigma}q\\
    &\geq \frac{1}{2}\int_\Sigma (R_M+H_\Sigma^2+|A_\Sigma|^2)|\Phi|^2-\int_\Sigma K_\Sigma|\Phi|^2-\int_{\partial \Sigma}\kappa_{\partial\Sigma}\\
   &\geq  
\end{align*}
***I think mapping it to disk seems not good, we should map it to sphere but the degree won't involve $r$***
\end{comment}

%%%%%%%%%%%%%%%%%%%%%

We may view $\bar{\Sigma}$ as a compact domain of a closed orientable surface $\hat{\Sigma}$ of genus $g$ by sticking disks to the boundary components of $\bar{\Sigma}$. Let $\Phi=(\Phi_1,\Phi_2,\Phi_3):\hat{\Sigma}\rightarrow \mathbb{S}^2$ be a holomorphic map with degree less than or equal to $\lfloor\frac{g+3}{2}\rfloor$. Since $(\Sigma,d\tilde{s}^2)$ has finite area it follows that $f_1$ is integrable, so there exists a conformal diffeomorphism $\Psi$ of $\mathbb{S}^2$ such that the composed map $\Psi\circ \Phi$ satisfies
\[\int_{\tilde{\Sigma}}f_1(\Psi\circ \Phi)=\vec{0}.\]
This is usually called the Hersch balancing trick, see \cite{Chen-Fraser-Pang}*{Lemma 5.1} for a proof. It then follows that for $i=1,2,3$
\[\int_\Sigma |\nabla (\Psi \circ \Phi)_i|^2-|A_\Sigma|^2(\Psi \circ \Phi)_i^2-\int_{\partial\Sigma}q(\Psi \circ \Phi)_i^2\geq 0.\]
Summing over $i=1,2,3$ yields
\begin{align*}
    8\pi \operatorname{deg}(\Psi)&=\int_\Sigma |\nabla (\Psi \circ \Phi)|^2\\
    &\geq \int_\Sigma |A_\Sigma|^2+\int_{\partial \Sigma}q\\
    &=\int_\Sigma H_\Sigma^2-2K_\Sigma+\int_{\partial \Sigma} H_{\partial M}- \kappa_{\partial\Sigma}\\
    &\geq -\int_\Sigma 2K_\Sigma-
    \int_{\partial \Sigma}2\kappa_{\partial\Sigma}.
\end{align*}
Note that $K_\Sigma$ and $k_{\partial\Sigma}$ are both integrable due to \eqref{eq: finite integral of p}), then combining above inequality with the Cohn-Vossen inequality (see \cite{Shiohama-The-geometry-of-total-curvature}*{Theorem 2.2.1}) we obtain
\[8\pi\left\lfloor\frac{g+3}{2}\right\rfloor\geq -2\pi(4-4g-2r-k-l).\]
This completes the proof.
\end{proof}

\section{Curvature bounds for capillary surfaces}\label{sec: curv bounds}

In this section, we obtain curvature estimates for strongly stable capillary surfaces. A 3-manifold $M$ with smooth boundary has \textit{bounded geometry} if
there are positive constants $\iota$ and $\Lambda$ such that:
\begin{enumerate}[(i)]
\item $M$ has absolute sectional curvature bound $|K_M| \le \Lambda$;
\item the boundary of $M$ has second fundamental form bound  
	$|h_{\partial M}| \le \Lambda$;
\item every geodesic of $M$ and every geodesic of $\partial M$ with length at most $\iota$ is minimizing;
\item there is a collar neighborhood $U$ of $\partial M$ in $M$ and a function $f \in C^2(U)$ such that $f|_{\partial M} =0$, $|\nabla f| = 1$ and $f(U) \supset [0, \iota)$.
\end{enumerate}
In this case, we will say that $M$ has \textit{curvature bounded above by} $\Lambda$
and \textit{injectivity radius bounded below by} $\iota$. In a local sense, these quantities control  
how far $M$ is from looking like a half-space of $\R^3$. This fact together with the uniqueness theorem for the 
half-plane (Theorem \ref{maintheorem1}) is essential to the proof of the following Theorem.

\begin{customthm}{\ref{curvature bound-introduction}}
Let $\theta \in (0, \pi)$. Then there is a constant $C = C(\theta)$ such that the following holds:
Let $M$ be a 3-manifold with smooth boundary. Assume that $M$ has
curvature bounded above by $\Lambda$ and injectivity radius bounded bellow by $\iota$.
Let $\Sigma$ be a strongly stable edged capillary surface immersed in $M$ at a constant angle $\theta$. Then, 
\begin{itemize}
    \item if
$H_\Sigma \cos \theta \ge 0$, we have
\[|A_{\Sigma}(p)|\min\{d_{\Sigma}(p, \partial\Sigma \setminus 
\partial M), \iota, (\sqrt{\Lambda})^{-1}\}\le C\]
for all $p \in \Sigma$; and
\item if 
$H_\Sigma \cos \theta < 0$, we have
\[|A_{\Sigma}(p)|\min\{d_{\Sigma}(p, \partial\Sigma \setminus 
\partial M), \iota, (\sqrt{\Lambda})^{-1}, -(H_\Sigma\cos \theta)^{-1}\}\le C\]
for all $p \in \Sigma$.
\end{itemize}
\end{customthm}
\begin{proof}
The arguments of Anderson et al.\cite{Anderson-Boundary-regularity-for-ricci-eq}*{Theorem 3.2.1} show that 
there are coordinate charts around any point $p \in M$ such that the
following holds: For any $\alpha \in (0, 1)$ and $Q_0 < \infty$, there is a 
$r_0 > 0$ depending only on $\Lambda, \iota, \alpha, Q_0$ such that:
\begin{enumerate}[(i)]
\item If $d_M(p, \partial M) > r_0$, then there is a neighborhood
	$U$ of $\vec 0$ in $\R^3$ and a coordinate chart 
	$\varphi: U \to B^M_{r_0}(p)$
	such that $\varphi(\vec 0) = p$, and in these coordinates,
	\begin{equation}\label{eq: C0 norms metric}
		Q_0^{-1} \delta_{ij}\le g_{ij} \le Q_0\delta_{ij}
	\end{equation}
	as quadratic forms, and the metric has H\"{o}lder bound
	\begin{equation}\label{eq: C1a norm metric}
		\|g_{ij}\|_{C^{1,\alpha}} \le Q_0.
	\end{equation}
\item If $d_M(p, \partial M) \le r_0$, there is a $b \leq 0$, a 
	neighborhood $U$ of $\vec 0$ in $\{x \in \R^3 : x_1 \ge b\}$ and a coordinate
	chart $\varphi: U \to B^M_{2r_0}(p)$ such
	that $U \cap \{x_1 = b\}$ is mapped to $\partial M$, $\varphi(\vec 0) = p$
	and \eqref{eq: C0 norms
	metric}--\eqref{eq: C1a norm metric} still hold in these coordinates.
\end{enumerate}

From now on fix any $\alpha \in (0, 1), Q_0 < \infty$ and fix $r_0$ such that the above
statements hold for $\iota = \Lambda =1$. We will refer to these coordinates as harmonic coordinates.

Assume by contradiction that there is a sequence of 3-manifolds with bounded
geometry $M_n$, a sequence of strongly stable edged capillary surfaces
$\Sigma_n$ immersed in $M_n$ at constant angle $\theta$, and a sequence of points $q_n \in \Sigma_n$ such that 
\[|A_n(q_n)|\min\{d_n(q_{n}, \partial\Sigma_n \setminus 
\partial M_n), \iota_n, (\sqrt\Lambda_n)^{-1}, (H_n\cos \theta)_{-}^{-1}\} > n,\]
where
\begin{itemize}
    \item $A_n$, $d_n$ are the second fundamental form and the intrinsic
distance of $\Sigma_n$, respectively;
\item $\iota_n$, $\Lambda_n$ are the injectivity radius and curvature bounds of $M_n$, respectively;
\item $H_n$ is the mean curvature of $\Sigma_n$, $(H_n \cos \theta)_{-}$ is $0$ if 
$H_n\cos \theta \ge 0$ and is $-H_n \cos \theta$ otherwise.
\end{itemize}
Since the quantity $|A_\Sigma(p)|\min\{d_n(p,\partial\Sigma\setminus\partial M),\iota,(\sqrt{\Lambda_n})^{-1}, (H\cos\theta)_{-}^{-1}\}$ is scale invariant, by re-scaling the metric of $M_n$ we can assume that $\iota_n \ge 1$, $\Lambda_n \le 1$, and 
$H_n\cos \theta \ge -1$. Hence, by passing to a subsequence, we can assume
\[|A_n(q_n)|\min\{d_n(q_{n}, \partial\Sigma_n \setminus 
\partial M_n), r_0\} > n.\]
Let $\mathcal D_n$ be the intrinsic disk in
$\Sigma_n$ around $q_n$ with radius $\min\{d_n(q_n, \partial\Sigma_n 
\setminus\partial M_n), r_0\}$. Take an interior point $p_n \in \mathcal D_n$ realizing the maximum
of
\[ |A_n(p)|\min\{d_{\mathcal D_n}(p, \partial \mathcal D_n), r_0\}\]
for all $p \in \mathcal D_n$. Let $\lambda_n = |A_n(p_n)|$ and
$R_n = \min\{d_{\mathcal D_n}(p_n, \partial \mathcal D_n), r_0\}$. Note that by
construction
\begin{equation}\label{eq: dist to boundary}
	\lambda_n R_n > n
\end{equation}
and $\lambda_n > n/r_0 \rightarrow \infty$ as $n\rightarrow \infty$. 

Let $\tilde{\mathcal D}_n$ be the disk around
$p_n$ of radius $R_n/2$, note that for all $p \in \tilde{\mathcal D}_n$ have that
$$\min\{d_{\mathcal D_n}(p, \partial \mathcal D_n), r_0\} \ge \frac{R_n}{2}.$$
Hence for all such $p$
\begin{equation}\label{eq: relative curvature bounds}
|A_n(p)| \le \frac{\lambda_n R_n}{\min\{d_{\mathcal D_n}(p, \partial \mathcal D_n), r_0\}}
\le 2\lambda_n.
\end{equation}

Let $\varphi_n: U_n\subset \mathbb{R}^3 \to B^{M_n}_{r_0}(p_n)$ be harmonic coordinates (here by $p_n
\in M_n$ we means the image of $p_n \in \Sigma_n$ by the immersion
$\Sigma_n \looparrowright M_n$).
Define $ \hat{\mathcal D}_n \subset U_n$ to be the pre-image of
$\tilde{\mathcal D}_n$ by $\varphi_n$. We will refer to the point in
$\hat{\mathcal D}_n$ associated to $p_n \in \tilde{\mathcal D}_n$ as $0_n$. Let
\[\lambda_n \hat{\mathcal D}_n \subset (\lambda_n U_n, 
\mu_{\lambda_n^{-1}}^*\varphi_n^*g_n)\]
be $\hat{\mathcal D}_n$ re-scaled by $\lambda_n$. Here $\mu_{\lambda_n^{-1}}: \R^3 \to
\R^3$ is dilation by $\lambda_n^{-1}$ about the origin and $g_n$ is the Riemannian metric in $M_n$. Using the $C^{1,\alpha}$
bounds on the metric tensor \eqref{eq: C0 norms metric} and \eqref{eq: C1a norm
metric} together with the fact that $\lambda_n \to \infty$,
it is easy to show (see \cite{Rosenberg-General-Curvature-Estimates}*{pages 631-632}) that
$(\lambda_nU_n, \mu_{\lambda_n^{-1}}^*\varphi_n^*g) \to (\Half, g_{eucl})$
uniformly on compact sets in the $C^{1,\alpha}$ topology, where $g_{eucl}$ is
the Euclidean metric and $\Half$ is either, some half space $\{x_1 \ge b\} \subset \R^3$ for
some $b \le 0$ or $\R^3$ itself.

From \eqref{eq: C0 norms metric} we can deduce that for any $m \in \N$ ,
there is a $n \in \N$ such that the Euclidean ball of radius $m$,
$B_m(\vec 0)$ is contained in $\lambda_n U_n$. Following 
\cite{Rosenberg-General-Curvature-Estimates},
we define $\Delta_{n,m}$ to be the component of 
$\lambda_n \hat{\mathcal D}_n \cap B_m(\vec 0)$
containing $0_n$. By \eqref{eq: dist to boundary}, the intrinsic distance from 
$0_n \in \lambda_n \hat{\mathcal D}_n$ to its edge is at least $n/2$, and by
\eqref{eq: relative curvature bounds} the norm of the second fundamental form of 
$\lambda_n \hat{\mathcal D}_n$ in the pull-back metric is bounded above by two.
Hence, making $n = n_m$ large enough in relation to $m$, we can assume
that the edge of $\Delta_{n_m,m}$ is disjoint from the interior of $B_m(\vec 0)$, 
and that $\Delta_{n_m,m}$ has Euclidean second fundamental form with norm bounded above by $5$ everywhere, and bounded
below by $1/2$ at $0_n$. Let $\Delta_m =\Delta_{n_m,m}$, passing to a subsequence, two 
scenarios
can happen: $\{\lambda_n d_{n}(p_n, \partial \Sigma_n)\}$ converges to some nonnegative
number or diverges to infinity.

\vspace{.3cm}
\underline{Case 1:} $\lambda_n d_{n}(p_n, \partial \Sigma_n) \to
\infty$.
\vspace{.3cm}

In this case the boundary of $\lambda_n\hat{\mathcal D}_n$ stays away from the
origin, hence the argument can proceed in a very similar way as in the fixed
boundary case \cite{Rosenberg-General-Curvature-Estimates}. We will explain the
argument in detail, as the second case is done in a similar way.

Up to passing to a subsequence, we can assume that the tangent planes $T_{0_n}\Delta_n$ converge.
Rotate the ambient space so this limit plane is $\{x_3 = 0\}$.
It follows from standard arguments (see
\cite{Rosenberg-General-Curvature-Estimates}*{Proposition 2.3},
\cite{Meeks-Ros-Rosemberg-Global-Theory-of-min-surf}*{Lemma 4.35}) that there
are positive constants $C$ and $\delta$ such that a part of 
$\Delta_n$ is the graph of a function $u_n$ over $D_\delta = \{x \in
B_\delta(\vec 0) : x_3 = 0\}$ and
\[|\nabla u_n| < 1,  \|u_n\|_{C^{2}(D_{\delta})} < C.\]
Applying the Schauder estimates to the mean curvature equation
(see \cite{Rosenberg-General-Curvature-Estimates}*{
Lemma 2.4}) it is easy to see that there is a $\delta' \in (0, \delta)$ such
that, after possibly making $C$ larger
\[\|u_n\|_{C^{2,\alpha}(D_{\delta'})} < C.\]

Passing to a subsequence we can assume there is a $u: D_\delta \to \R$ such
that $u_n \to u$ in $C^{2}(D_{\delta'})$ and $C^{2, \alpha/2}(D_{\delta'})$.
Fix a $\delta'' \in (\delta', \delta)$ and let $x_0 \in D_{\delta'}$  be at
Euclidean distance of $\delta'/2$ from the origin. By repeating the arguments
above we have that for $n$ large, the $\Delta_n$ are graphs over the disk of
radius $\delta''$ in the tangent plane of the graph of $u$ at $(x_0, u(x_0))$.
Passing to a subsequence, these functions must converge in the $C^{2}$ and
$C^{2,\alpha/2}$ topology, extending the graph of $u$ to a larger surface.
Continuing this construction and using a diagonal argument we encounter a
complete noncompact CMC surface $S$ immersed in $(\mathbb{R}^3,g_{eucl})$ with second fundamental
form bounded above by 5 everywhere and passing through the origin with nonzero second
fundamental form.

Let $\tilde S$ be the universal cover of $S$. We will show that $\tilde S$
accepts a positive Jacobi function
\footnote{The definition of the index of a surface
touching the boundary of the manifold in its interior can be ambiguous
\cite{Carlotto-Franz-Inequivalent-complexity-criteria}. For this reason we do
not state that $\tilde S$ is strongly stable.
}. By arguments of Fischer-Colbrie and Schoen
\cite{Fischer-Colbrie-Schoen-The-structure-of-complete-stable}*{Theorem 1} we
only have to show that every bounded open set $\tilde U \subset \tilde S$ accepts
a positive Jacobi function. Then it follows from arguments of Fischer-Colbrie and
Schoen \cite{Fischer-Colbrie-Schoen-The-structure-of-complete-stable}
(see also \cites{doCarmo-Peng, daSilveira-Stability-of-complete}) that $\tilde
S$ must be a flat plane, contradicting the fact that this
surface passes through the origin with nonzero second fundamental form.

Now we will show that $\tilde U$ accepts a positive Jacobi function.
Let $\Pi: \tilde S \times (-\epsilon, \epsilon)$ be the immersion $\Pi(p,t) = 
\tilde f(p) + t\tilde \nu(p)$ where $\tilde f$ is the immersion of $\tilde S$
in $\R^3$ and $\tilde \nu$ is a choice of normal in $\tilde S$. Let $U$ be the
image of $\tilde U$ in $S$. By
construction, for $n$ large, a piece of $\Delta_n$ will be close enough to $U$
everywhere so that this piece is parametrized by a function $\Pi(\cdot, \tilde u_n(\cdot))$
where $\tilde u_n: \tilde U \to (-\epsilon, \epsilon)$. Since $\Delta_n$ is strongly
stable it accepts a positive Jacobi function $v_n$. Let $\tilde v_n: \tilde S \to
\R^+$ be such that $\tilde v_n(\cdot) = v_n(\Pi(\cdot, \tilde u_n(\cdot)))$, similarly
the Jacobi operator on $\Delta_n$ defines an elliptic operator on
$\tilde U$. Since the metrics are converging in $C^{1,\alpha}$ to the Euclidean
metric and $\tilde u_n \to 0$ in $C^{2, \alpha/2}$, we can conclude that
these operators converge to the Jacobi operator on $\tilde U$ and that the functions $\tilde
v_n$ converge to a positive Jacobi function in $\tilde U$.

\vspace{.3cm}
\underline{Case 2:} $\lambda_n d_n(p_n, \partial \Sigma_n)$ converges to some
nonnegetive number.
\vspace{.3cm}

Let $\partial\Half_n \cap \lambda_nU_n$ be the set in $\lambda_n U_n$ associated
to $\partial M$, where $\Half_n$ is a half-space of $\R^3$. By shifting the $\lambda_nU_n$ and 
$\lambda_n\hat{\mathcal D}_n$ slightly we can
assume that $\Half_n = \Half$ for all $n$ large. We will still denote the point
associated to $p_n$ in this shifted surface as $0_n$. Note that $0_n$ might not
map to origin, but their images go to the origin as $n \to \infty$.

Using the bounds on the Euclidean second fundamental form of $\Delta_n$ it follows from standard arguments that there are constants $C$ and $\delta$ depending on $\theta$ such that the following holds:

\begin{itemize}
    \item If $\lim_n d_{\Delta_n}(0_n, \partial \Delta_n) \ge 2\delta$, then we can pass to a subsequence and rotate the ambient space so a piece of $\Delta_n$ near the origin is the graph of a function $u_n$ over $D_\delta$, furthermore $u_n$ has $C^1$ and $C^2$ bounds as in Case 1.
    
    \item Now assume $\lim_n d_{\Delta_n}(0_n, \partial \Delta_n) < 2\delta$.
    Choose a point $x_n \in \partial \Delta_n$ closest to $0_n$, passing to a subsequence and rotating we can assume that the limit of the angle between the normal to $\Delta_n$ at $x_n$ and the third coordinate vector field $E_3$ is
    $\pi/2 -\theta$. Then a piece of $\Delta_n$ near $x_n$ can be parametrized by the graph of a function $u_n$ over $D_{4\delta} \cap \Half$, moreover $u_n$ has $C^1$ and $C^2$ bounds as in Case 1.
\end{itemize}

Using boundary
Schauder estimates \cite{Nirenberg-Estimates-near-boundary}*{Theorem 7.3}
and the fact that these surfaces are capillary it is easy to obtain
$C^{2,\alpha}$ bounds for these graphs analogous to the ones obtained in Case 1.
Using the same construction as in Case 1 we obtain a capillary surface $S$
immersed in $\Half$ at a constant angle $\theta$ such that $ H_S\cos \theta = \lim_n H_n\cos\theta / \lambda_n \ge 0$.

The arguments used in Case 1 together with Proposition \ref{prop: positive jacobi field} allow us to
construct a positive Jacobi function $\tilde v$ on the universal cover 
$\tilde S$ of $S$. The only difference is that, in this case instead of considering the normal $\tilde \nu$ to $\tilde S$ we must consider a nonzero vector filed that is normal to $\tilde S$ away from $\partial \tilde S$ and is parallel to $\partial \Half$ at $\partial \tilde S$. Finally, remark \ref{remark: regidity compact case} we conclude that $\tilde S$ must be noncompact, hence it follows from Theorem \ref{maintheorem1} that it is a plane, contradicting the fact that it passes through the origin with nonzero second fundamental form.
\end{proof}

\begin{remark}
In the case where $\Sigma$ is capillary, that is, has empty edge we interpret 
$d_\Sigma(p, \partial \Sigma \setminus \partial M)$ to be positive infinity for all $p \in \Sigma$.
\end{remark}

\begin{corollary}\label{cor:diameter bounds}
Fix $\theta \in (0, \pi /2]$ and take $C, M, \iota$ and $\Lambda$ as in the statement of Theorem
\ref{curvature bound-introduction}. Let $\Sigma$ be an edged capillary surface with finite index immersed immersed
in $M$ at constant angle $\theta$. Assume $H_\Sigma \ge \sqrt 2C\max\{\iota^{-1},\sqrt \Lambda\}$. Then,
\begin{itemize}
    \item in case $\Sigma$ has nonempty edge,
\begin{equation}\label{eq: diameter bound edged}
   d_\Sigma(p, \partial \Sigma \setminus \partial M) 
   \le  (\Index_s(\Sigma) + 1) \frac{2\sqrt 2 C}{H_\Sigma}
\end{equation}
for all $p \in \Sigma$; and
\item in case $\Sigma$ is capillary
\begin{equation}\label{eq: diameter bound}
   \textup{diam}_\Sigma(\Sigma)
   \le \Index_s(\Sigma)\frac{2\sqrt 2 C}{H_\Sigma},
\end{equation}
where $\textup{diam}_\Sigma(\Sigma)$ is the intrinsic diameter of $\Sigma$.
\end{itemize}
\end{corollary}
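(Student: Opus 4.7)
The plan is to combine the pointwise curvature estimate of Theorem \ref{curvature bound-introduction} with a spectral pigeonhole argument on disjoint intrinsic balls. Let $I := \Index_s(\Sigma)$ and $r := \sqrt{2}\,C/H_\Sigma$; the hypothesis on $H_\Sigma$ ensures $\iota \ge r$ and $(\sqrt{\Lambda})^{-1} \ge r$, while in dimension two the Cauchy-Schwarz inequality on the principal curvatures gives $|A_\Sigma| \ge H_\Sigma/\sqrt{2}$ pointwise. The first ingredient is a stability lemma: if $\Omega \subset \Sigma$ is any open edged capillary subsurface that is itself strongly stable, then Theorem \ref{curvature bound-introduction} applied to $\Omega$ and combined with $|A_\Sigma| \ge H_\Sigma/\sqrt{2}$ forces
\[
\min\{d_\Omega(p, \partial\Omega \setminus \partial M),\ \iota,\ (\sqrt{\Lambda})^{-1}\} \le r
\]
at each $p \in \Omega$; hence, as long as $\iota$ and $(\sqrt{\Lambda})^{-1}$ strictly exceed $r$, we conclude $d_\Omega(p, \partial\Omega\setminus\partial M) \le r$ for every $p \in \Omega$. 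The second ingredient is the standard spectral pigeonhole: any $I+1$ pairwise disjoint open edged capillary subsurfaces $\Omega_0, \ldots, \Omega_I \subset \Sigma$ must contain at least one that is strongly stable, since otherwise one extracts $I+1$ disjointly-supported test functions in $C^\infty_0(\Sigma)$ spanning an $(I+1)$-dimensional subspace on which $Q < 0$, contradicting $\Index_s(\Sigma) = I$.

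To prove \eqref{eq: diameter bound edged}, assume for contradiction that $d_\Sigma(p, \partial\Sigma\setminus\partial M) > (I+1)\cdot 2r$ for some $p \in \Sigma$; take a minimizing path $\gamma$ of length $L$ from $p$ to the edge, set $R := L/(2(I+1)) > r$, and place $I+1$ intrinsic balls $\Omega_j := B_\Sigma(\gamma(jL/(I+1)), R)$ for $j=0,\ldots,I$. Their centers are spaced $2R$ apart so the open balls are pairwise disjoint, and the last center lies $2R > R$ from the edge endpoint so no ball meets $\partial\Sigma \setminus \partial M$. The pigeonhole produces a strongly stable $\Omega_{j_0}$; but its center has intrinsic edge-distance $R > r$ inside $\Omega_{j_0}$, contradicting the stability lemma. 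For the capillary bound \eqref{eq: diameter bound} the absence of an edge in $\Sigma$ lets us also center balls at the endpoints $p_0, p_1$ of a diameter-realizing geodesic of length $D$: taking $I+1$ balls of radius $R := D/(2I)$ centered at $\gamma(jD/I)$, $j=0,\ldots,I$ (endpoint balls become half-balls if their centers lie on $\partial M$, but these remain legitimate edged capillary subsurfaces), the same argument shows that $D > 2Ir$ would produce a strongly stable ball with $R > r$, a contradiction.

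The main technical hurdle is verifying that these intrinsic balls genuinely qualify as edged capillary subsurfaces in the sense of Section \ref{subsection-edged-surfaces}, namely smoothness of their boundaries and the correct capillary angle along any intersection with $\partial M$; this holds for generic $R$, and a small perturbation of $R$ within the open interval $(r, \min\{\iota, (\sqrt{\Lambda})^{-1}\})$ — nonempty under the strict version of the hypothesis and accessed by an approximation argument otherwise — avoids any singular radii. The borderline case $H_\Sigma = \sqrt{2}\,C\,\max\{\iota^{-1}, \sqrt{\Lambda}\}$, where the stability lemma only delivers pointwise umbilicity rather than a strict distance bound, is absorbed by the slack in the statement, which permits taking $R$ strictly larger than $r$ throughout the contradiction argument.
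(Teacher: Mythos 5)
Your argument is essentially the same as the paper's: both apply the curvature estimate of Theorem~\ref{curvature bound-introduction} to strongly stable geodesic disks, use $|A_\Sigma|\ge H_\Sigma/\sqrt 2$ and the hypothesis on $H_\Sigma$ to bound the radius of any stable disk by $\sqrt 2 C/H_\Sigma$, and then place $I+1$ pairwise disjoint disks along a minimizing geodesic to the edge (resp.\ along a diameter-realizing geodesic) so that the pigeonhole forces a stable disk of too-large radius. The paper treats $I=0$ separately and then uses disks of radius $\sqrt 2 C/H_\Sigma + \epsilon/2$ centered at $\gamma(t_i)$ with $t_{i+1}-t_i \ge 2\sqrt 2 C/H_\Sigma + \epsilon$; you package the same facts as a ``stability lemma'' plus a ``pigeonhole'' and choose the radius $R=L/(2(I+1))$, which amounts to the same contradiction.

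Two small remarks. First, your formula $R = D/(2I)$ for the capillary bound~\eqref{eq: diameter bound} is ill-posed when $I=0$; in that case the claim reduces to $\operatorname{diam}_\Sigma(\Sigma)\le 0$, i.e.\ nonexistence, which requires a separate (one-disk) argument rather than a pigeonhole over $I+1$ disks. The paper glosses over this by saying the proof is ``very similar''. Second, your paragraph on the borderline case $H_\Sigma = \sqrt 2 C\max\{\iota^{-1},\sqrt\Lambda\}$ does not actually resolve it: the problem there is not the slack in $R>r$ but that $\iota$ (or $(\sqrt\Lambda)^{-1}$) may equal $r$ exactly, in which case the minimum in Theorem~\ref{curvature bound-introduction} is achieved by $\iota$ rather than by $d_\Omega(p,\partial\Omega\setminus\partial M)$ and one only concludes umbilicity, not a distance bound. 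The paper's argument has the same implicit gap, so this is not a defect specific to your write-up, but your claim that the slack ``absorbs'' it is not justified.
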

\begin{proof}
We will only show \eqref{eq: diameter bound edged} since the proof of \eqref{eq: diameter bound}
is very similar. Let $I = \Index_s(\Sigma)$, first we consider the case where $I = 0$. Since $|A_\Sigma| \ge H_\Sigma/\sqrt 2$, it follows
from Theorem \ref{curvature bound-introduction} that 
\[H_\Sigma d_\Sigma(p, \partial \Sigma \setminus \partial M) \le \sqrt 2 C \]
for all $p \in \Sigma$.

Now consider the case when $I > 0$. Let $p \in \Sigma$ and take $\gamma: [0, L] \to \Sigma$ to be an unit speed
geodesic minimizing the distance from $p$ to $\partial \Sigma \setminus \partial M$. Assume by contradiction 
that $\gamma$ has length $(I + 1) 2\sqrt 2 C/H_\Sigma + (I + 1)\epsilon$ for some $\epsilon > 0$.
Fix numbers $0 = t_0 < t_1 \cdots < t_{I + 1} = L$ such that $t_{i+ 1} - t_i \ge  2\sqrt 2 C/H_\Sigma + \epsilon$ for $i = 0, \cdots, I$. 
Let $\mathcal D_i$ be the geodesic disks around $\gamma(t_i)$ with radius $\sqrt 2C/H_\Sigma + \epsilon/2$.
Note that by construction these disks are pair-wise disjoint and they are disjoint from the edge of $\Sigma$
for $i = 0, \cdots, I$. The case $I = 0$ implies that all these disks must be strongly unstable. This means that $\Sigma$ must have index at least $I + 1$, a contradiction.
\end{proof}
\vspace{1cm}

\appendix
\section{A Poincar\'e-type inequality}\label{appendix A}

In this appendix we will show a Poincar\'e-type inequality that plays a role in the
proof of Proposition \ref{prop: L2 characterization}. We assume that $w$ is a
nonnegative Lipschitz function on $\Sigma$ with bounded support. Note that $w$ can be positive on
the boundary of $\Sigma$.

Let $\Omega$ be the support of $w$ and 
$\Gamma = \Omega \cap \partial\Sigma$. For a function $\varphi \in C(\Omega)$
define the norms
\[\|\varphi\|_\Omega^2 = \int_\Omega \varphi^2,\ \|\varphi\|_{\Omega,w}^2 =
\int_\Omega w\varphi^2.\]
In order to show our Poinca\'e-type inequality we need:
\begin{proposition}
There is a constant $C > 0$ such that
\begin{equation}
\label{eq: bound weighted trace}
\int_\Gamma w\varphi^2
\le C \|\varphi\|_{\Omega}(\|\varphi\|_{\Omega}+ \||\nabla
\varphi|\|_{\Omega,w})
\end{equation}
for all $\varphi \in C^\infty(\Sigma)$.
\end{proposition}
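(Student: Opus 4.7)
The plan is to obtain the inequality from the divergence theorem applied to a carefully chosen multiple of a vector field that extends the outward conormal of $\partial\Sigma$. The weight $w$ will conveniently kill the boundary terms on the portion of $\partial\Omega$ lying in the interior of $\Sigma$.

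First I would fix a smooth vector field $X$, defined on an open neighborhood of $\Omega$ in $\Sigma$, such that $\langle X,\eta\rangle\equiv 1$ along $\partial\Sigma$ in that neighborhood (this is standard: extend $\eta$ off $\partial\Sigma$ via a tubular neighborhood and cut off smoothly). Since the relevant neighborhood of $\Omega$ is bounded, $|X|$ and $|\operatorname{div}X|$ are both bounded there by some constant $C_1$; moreover $|\nabla w|$ is bounded by the Lipschitz constant of $w$, say by $C_2$. Next, I would enlarge $\Omega$ slightly to a smoothly bounded domain $\tilde\Omega\subset\Sigma$ with $\Omega\subset\tilde\Omega$ and $w\equiv 0$ on $\tilde\Omega\setminus\Omega^\circ$; set $\tilde\Gamma=\tilde\Omega\cap\partial\Sigma$.

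Then I would apply the divergence theorem to the vector field $w\varphi^2 X$ on $\tilde\Omega$. Along $\partial\tilde\Omega\setminus\tilde\Gamma$ the factor $w$ vanishes, and along $\tilde\Gamma\setminus\Gamma$ the factor $w$ also vanishes since $w$ is supported in $\Omega$; hence
\[
\int_\Gamma w\varphi^2\,d\ell=\int_{\tilde\Omega}\operatorname{div}(w\varphi^2 X)
=\int_{\Omega}\varphi^2\langle\nabla w,X\rangle+2w\varphi\langle\nabla\varphi,X\rangle+w\varphi^2\operatorname{div}X.
\]
Using the pointwise bounds on $|X|$, $|\nabla w|$, and $|\operatorname{div}X|$, the right-hand side is controlled by
\[
C_1C_2\int_\Omega\varphi^2+2C_1\int_\Omega w|\varphi||\nabla\varphi|+C_1\int_\Omega w\varphi^2.
\]
Cauchy--Schwarz gives $\int_\Omega w|\varphi||\nabla\varphi|\le\|\varphi\|_{\Omega,w}\||\nabla\varphi|\|_{\Omega,w}$, and the bound $w\le\sup_\Omega w$ yields $\|\varphi\|_{\Omega,w}\le(\sup w)^{1/2}\|\varphi\|_\Omega$ as well as $\int_\Omega w\varphi^2\le(\sup w)\|\varphi\|_\Omega^2$. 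Combining these pieces produces exactly the bound
\[
\int_\Gamma w\varphi^2\le C\|\varphi\|_\Omega\bigl(\|\varphi\|_\Omega+\||\nabla\varphi|\|_{\Omega,w}\bigr)
\]
for a constant $C$ depending only on $w$ and on the auxiliary data ($X$, the geometry near $\Omega$).

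The main subtlety I expect is the justification of the divergence theorem on $\tilde\Omega$: $\Omega$ itself is only the support of a Lipschitz function and need not have smooth boundary, but this is handled by the enlargement to a smooth domain $\tilde\Omega$ in which the divergence theorem applies classically, since $w\varphi^2$ extends to a Lipschitz function vanishing on $\partial\tilde\Omega\setminus\tilde\Gamma$. Alternatively one can approximate using the regular level sets $\{w>t\}$ of the Lipschitz function $w$ (regular for a.e.\ $t$ by the Morse--Sard theorem for Lipschitz functions) and send $t\to 0^+$, which gives the same identity by monotone convergence.
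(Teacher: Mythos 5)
Your proof takes essentially the same approach as the paper: apply the divergence theorem to $w\varphi^2$ times a vector field extending the conormal $\eta$, then control the resulting terms with Cauchy--Schwarz and the pointwise bounds on $w$, $\nabla w$, the extension, and its divergence. The only difference is that you spell out the justification of the divergence theorem (by enlarging the support of $w$ to a smooth domain on which $w$ vanishes away from $\partial\Sigma$), whereas the paper applies it directly to $\Omega$ without comment; this is a reasonable technical refinement but not a different method.
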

\begin{proof}
Let $\tilde \eta$ be a compactly supported smooth vector field in $\Sigma$ that extends
the conormal $\eta$. Then for $\varphi \in
C^\infty(\Sigma)$ we have that
\begin{align*}
\int_\Gamma w\varphi^2
&= \int_\Omega \text{div}(w\varphi^2\tilde \eta)\\
&= \int_\Omega \varphi^2 \ll\nabla w, \tilde \eta \rl + 2w\varphi \ll \nabla
\varphi, \tilde \eta\rl + w\varphi^2\text{div}(\tilde \eta)\\
&\le  2\left(\int_\Omega w\varphi^2\right)^\frac{1}{2}
\left( \int_\Omega w\ll \nabla \varphi, \tilde \eta\rl^2\right)^\frac{1}{2}
+ \int_\Omega \varphi^2 (|\nabla w||\tilde \eta| + w|\text{div}(\tilde
\eta)|)\\
&\le C \|\varphi\|_{\Omega}(\|\varphi\|_{\Omega}+ \||\nabla
\varphi|\|_{\Omega,w}),
\end{align*}
where $C$ depends only on $w$ and $\tilde \eta$.
\end{proof}

\begin{lemma}\label{lemma: poincare}
For every $\epsilon > 0$ there is a constant $C = C(\epsilon) > 0$
such that
\[\int_{\Gamma} w \varphi^2 \le \epsilon \int_\Omega w|\nabla
\varphi|^2 + C\int_{\Omega} \varphi^2\]
for all $\varphi \in C^\infty(\Sigma)$.
\end{lemma}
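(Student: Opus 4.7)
The plan is to derive the desired inequality directly from the Cauchy–Schwarz-style bound \eqref{eq: bound weighted trace} established in the preceding proposition, by applying Young's inequality to absorb the $\||\nabla \varphi|\|_{\Omega, w}$ factor into the gradient term on the right.

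More precisely, I would start from
\[
\int_\Gamma w\varphi^2 \le C \|\varphi\|_{\Omega}\bigl(\|\varphi\|_{\Omega}+ \||\nabla \varphi|\|_{\Omega,w}\bigr)
= C\|\varphi\|_\Omega^2 + C\|\varphi\|_\Omega\, \||\nabla \varphi|\|_{\Omega, w},
\]
where $C$ is the constant from \eqref{eq: bound weighted trace}. To the mixed term I would apply Young's inequality in the form $ab \le \delta b^2 + \frac{1}{4\delta} a^2$ with $a = C\|\varphi\|_\Omega$, $b = \||\nabla \varphi|\|_{\Omega, w}$ and $\delta = \epsilon$, which gives
\[
C\|\varphi\|_\Omega\, \||\nabla \varphi|\|_{\Omega, w} \le \epsilon \int_\Omega w|\nabla \varphi|^2 + \frac{C^2}{4\epsilon}\int_\Omega \varphi^2.
\]
Combining the two displays yields the stated bound with $C(\epsilon) = C + \tfrac{C^2}{4\epsilon}$.

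There is essentially no obstacle: the only subtlety is that \eqref{eq: bound weighted trace} is stated for $\varphi \in C^\infty(\Sigma)$, which is exactly the function class in the lemma, so no additional approximation argument is needed. The constant $C(\epsilon)$ blows up as $\epsilon \to 0$, as expected for an interpolation inequality of this type.
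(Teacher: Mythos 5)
Your proof is correct. It rests on the same key estimate \eqref{eq: bound weighted trace} as the paper's proof, but extracts the lemma in a genuinely different and more transparent way: the paper argues by contradiction, normalizing a hypothetical bad sequence $\varphi_n$ so that $\||\nabla \varphi_n|\|_{\Omega,w}=1$ and then splitting into cases according to whether $\|\varphi_n\|_\Omega$ blows up or not, each time deriving a contradiction from \eqref{eq: bound weighted trace}. You instead expand the right-hand side of \eqref{eq: bound weighted trace} and absorb the cross term $C\|\varphi\|_\Omega\,\||\nabla\varphi|\|_{\Omega,w}$ with Young's inequality, choosing $\delta=\epsilon$. This gives the result directly with the explicit constant $C(\epsilon)=C+\tfrac{C^2}{4\epsilon}$, whereas the contradiction argument produces no constant and is slightly longer. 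One small bonus of your approach is that it sidesteps the implicit (and unstated) normalization issue in the paper's proof when $\||\nabla\varphi_n|\|_{\Omega,w}=0$ for some $n$, which the contradiction argument would need to dispatch separately. Both proofs correctly observe that the function class $C^\infty(\Sigma)$ is identical in the Proposition and the Lemma, so no density argument is required.
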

\begin{proof}
Suppose by contradiction that the result does not hold. Then, for some
$\epsilon_0 >0$ there is a sequence $\varphi_n \in C^\infty(\Sigma)$ such that
for all $n \in \N$
\begin{equation}\label{eq: assumption poincare}
\int_{\Gamma} w \varphi_n^2 > \epsilon_0 \int_\Omega w|\nabla
\varphi_n|^2 + n\int_{\Omega} \varphi_n^2.
\end{equation}
By rescaling we can assume that $\||\nabla \varphi_n\|_{\Omega, w} = 1$.

Firstly, if $\|\varphi_n\|_\Omega \to \infty$, then by rescaling this
sequence we can obtain another sequence of functions $\psi_n \in C^\infty(\Sigma)$ such that
$\||\nabla \psi_n|\|_{\Omega, w} \to 0$, $\|\psi_n\|_{\Omega} = 1$ and
$\int_{\Gamma} w \psi_n^2 > n$. This contradicts \eqref{eq: bound weighted trace}.

Thus we can assume that $\|\varphi_n\|_{\Omega}$ are uniformly
bounded above. It follows from \eqref{eq: bound weighted trace} that 
$\int_{\Gamma} w \varphi_n^2$ are uniformly bounded above, and hence by
\eqref{eq: assumption poincare} we must have that $\|\varphi_n\|_{\Omega} \to
0$. This is impossible since, by \eqref{eq: bound weighted trace} it would imply
that $\int_{\Gamma} w \varphi_n^2 \to 0$ but $\int_{\Gamma} w \varphi_n^2 >
\epsilon_0$.
\end{proof}

\bibliography{refs}
\end{document}